\documentclass[12pt,reqno]{amsart}
\usepackage{amssymb,amsmath,amsthm,fullpage,enumerate}
\usepackage{xcolor}
\usepackage{biblatex} 
\usepackage{verbatim}
\usepackage{cases}

\usepackage[foot]{amsaddr}

\usepackage[OT2,OT1]{fontenc}

\DeclareFontFamily{OT2}{cmr}{\hyphenchar\font45 }
\DeclareFontShape{OT2}{cmr}{m}{n}{<->wncyr10}{}
\DeclareFontShape{OT2}{cmr}{m}{it}{<->wncyi10}{}
\DeclareFontShape{OT2}{cmr}{m}{sc}{<->wncysc10}{}
\DeclareFontShape{OT2}{cmr}{b}{n}{<->wncyb10}{}
\DeclareFontShape{OT2}{cmr}{bx}{n}{<->ssub*wncyr/b/n}{}

\DeclareFontFamily{OT2}{cmss}{\hyphenchar\font45 }
\DeclareFontShape{OT2}{cmss}{m}{n}{<->wncyss10}{}

\DeclareRobustCommand\cyr{\fontencoding{OT2}\selectfont}
\DeclareTextFontCommand{\textcyr}{\cyr}

\DeclareUnicodeCharacter{0306}{}

\newtheorem{theorem}{Theorem}[section]
\newtheorem{prop}[theorem]{Proposition}
\newtheorem{lem}[theorem]{Lemma}
\newtheorem{corr}[theorem]{Corollary}

\theoremstyle{definition}

\newtheorem{deff}[theorem]{Definition}

\newtheorem{examp}[theorem]{Example}

\theoremstyle{remark}
\newtheorem{comm}[theorem]{Remark}

\newcommand{\R}{\mathbb R}

\newcommand{\C}{\mathbb C}

\newcommand{\p}{\partial}

\newcommand{\z}{\bar z}

\newcommand{\dbar}{\bar\partial}

\newcommand{\bc}{\mathbb{B}}

\newcommand{\sca}{\text{Sc}\,}
\newcommand{\vect}{\text{Vec}\,}

\newcommand{\disk}{\mathbb{D}}

\title{Generalizations of the Dirichlet problem for bianalytic functions}

\author{William L. Blair}
\address{Department of Mathematics\\
  The University of Texas at Tyler\\
  Tyler, TX, 75799\\
  USA}
\email{wblair@uttyler.edu}

\keywords{Dirichlet problem, generalized analytic functions, Vekua equation, bicomplex numbers, bianalytic functions,
Bitsadze equation, higher-order iterated Vekua equation, complex boundary value problems}

\subjclass[2010]{30G20, 30E25, 30G30,  35J46, 30J99}

\begin{document}

\begin{abstract}
    We prove the Dirichlet problem for second-order iterated Vekua equations, a natural generalization of the Bitsadze equation, is well-posed when the boundary condition is defined as a product of an exponential function and a polynomial on a non-degenerate conic that is not a circumference. Also, we extend this result, as well as other related results for the Dirichlet problem for polyanalytic and generalized analytic functions from the literature, to their analogues for bicomplex differential equations.  
\end{abstract}

\maketitle

\section{Introduction}\label{section: introduction}

    The study of boundary value problems is an active area of mathematical research motivated by the success in using these problems to model phenomena in the physical sciences. In this article, we study the Dirichlet boundary value problem for higher-order iterated Vekua equations. 

    The classical Dirichlet problem seeks a harmonic function, i.e., a solution $w$ to the Laplace equation $\Delta w = 0$, in a domain that agrees with a prescribed function on the boundary. This problem is well-studied and known to be well-posed on planar domains. In \cite{Bitsadze1948}, Bitsadze shows the Dirichlet problem 
    \begin{equation}\label{eq: dbvpintro}
        \begin{cases}
            \frac{\p^2 w}{\p\z^2} = f, & \text{ in } D,\\
            w = g, & \text{ on } \p D,
        \end{cases}
    \end{equation}
    where $D$ is the complex unit disk, $f\equiv 0$, and $g\equiv 0$, has infinitely many solutions. Since this is, in a sense, the simplest case of the problem for the differential equation $\frac{\p^2 w}{\p\z^2} = f$, now called the Bitsadze equation, it follows that one need not look far to find other differential equations where the associated Dirichlet boundary value problem is not as well-behaved. 

    In \cite{Beg04}, Begehr shows that, for $D$ equal to the complex unit disk, \eqref{eq: dbvpintro} does have a unique solution when $f$ and $g$ satisfy certain integral conditions. So, there is evidence that well-posedness of \eqref{eq: dbvpintro} on the complex unit disk is dependent on the choice of nonhomogeneous part and boundary function. Surprisingly, Moreno Garc\'ia, et al., in \cite{Dirichletbianalytic}, show that when the boundary condition is a polynomial on a non-degenerate conic, that is not a circumference, then \eqref{eq: dbvpintro} is well-posed. That circumferences are excluded agrees with Bitsadze's result on the unit circle discussed above.

    The recent discovery in \cite{Dirichletbianalytic} of boundary sets that guarantee well-posedness of \eqref{eq: dbvpintro} for complex second-order partial differential equations that are not the Laplace equation motivates the work of this article. Specifically, we seek another differential equation that may take the place of the Bitsadze equation, other than the Laplace equation, such that the associated Dirichlet problem is well-posed with the same boundary set as the problem for the Bitsadze equation from \cite{Dirichletbianalytic}. Since it is known that bianalytic functions, i.e., solutions of the Bitsadze equation, and solutions to the second-order iterated Vekua equations studied in \cite{itvek, itvekbvp, metahardy, WB3, WBD, WBD2, BCHoiv} (see \eqref{eq: hoivintro} below) are similar in structure, it follows that these boundary sets are a good candidate for well-posedness for the Dirichlet problem for solutions to second-order Vekua equations. The Dirichlet problem for higher-order iterated Vekua equations was first considered in \cite{WBD2}, and in that work, the author and B. B. Delgado show that the Dirichlet problem for higher-order Vekua equations is not well-posed on the disk, just as the problem for the Bitsadze equation is not, and show that solutions do exist on the disk when certain integral conditions are satisfied, as is the case for the Bitsadze equation in \cite{Beg04}. In Section \ref{sec: 2.2.2} of this paper, we confirm that non-degenerate conics which are not a circumference do provide boundary sets where the Dirichlet problem for a second-order Vekua equation is well-defined, so long as the boundary function is the product of a polynomial and a certain exponential function that is determined by the specific equation.

    The bicomplex numbers are a four real dimensional (or two complex dimensional) extension of the complex numbers first considered by Segre \cite{Segre}. Recently, the bicomplex numbers have found application in analysis, especially in the context of Vekua equations as they provide a direction for studying the complex stationary Schr\"odinger equation. For several examples where bicomplex numbers are used in analysis, see \cite{ComplexSchr, CastaKrav, FundBicomplex, BCTransmutation, BCBergman, BCAtomic, BCHoiv, BCHarmVek, BCSchwarz, BCBeltrami} among many others. We consider a version of \eqref{eq: dbvpintro} where solutions are functions of a single complex variable that take values in the bicomplex numbers. Using the idempotent representation of bicomplex numbers (which extends to bicomplex-valued functions), we show that classical results for \eqref{eq: dbvpintro} also hold for its bicomplex version, as well as the recent result from \cite{Dirichletbianalytic} on well-posedness by reducing to the complex case. With the same methods, we go on to show that the bicomplex version of the Dirichlet problem for second-order iterated Vekua equations also recovers the known results for the associated complex problem from \cite{WBD2} as well as the new theorem concerning well-posedness. This leads to a complete characterization of solvability of the Dirichlet problem for arbitrarily higher-order Vekua equations in terms of solvability of two complex Dirichlet problems or associated integral conditions.

    We describe the layout of the article. Section \ref{section: background} includes the necessary background for the variety of objects that we consider throughout the paper. Also, we prove the Dirichlet problem for a second-order Vekua equation is well-posed on the same sets as the Dirichlet problem for the Bitsadze equation with a related boundary condition. In Section \ref{section: BicomplexExtension}, we extend many results from the literature for the complex Dirichlet problem into the setting of bicomplex-valued functions. In particular, we recover ill-posedness on the unit circle for all orders, well-posedness on non-degenerate conics that are not circumferences for the bicomplex variant of the Bitsadze equation and the second-order Vekua equation, and characterize solvability on the disk for arbitrary order.

\section{Background}\label{section: background}
    Let $\disk$ be the complex unit disk. For a set $S \subset \mathbb{C}$, let $L^p(S)$ be the Lebesgue space of complex-valued functions on $S$ with integrable modulus raised to the $p^{\text{th}}$ power, $W^{k,p}(S)$ be the Sobolev space of complex-valued functions on a set $S$ with derivatives up to order $k$ which are all in $L^p(S)$, $C(S)$ be the space of complex-valued functions that are continuous on $S$, $C^{0,\alpha}(S)$ be the space of complex-valued $\alpha$-H\"older continuous functions on $S$, and $Hol(S)$ be the collection of complex-valued holomorphic functions on $S$. 

    \subsection{Generalized Analytic Functions}

        \subsubsection{The Classical Theory}

            Complex partial differential equations of the form 
            \begin{equation}\label{eq: vekintroeqn}
                \frac{\p w}{\p\z} = Aw + B\overline{w},
            \end{equation}
            where $\frac{\p}{\p\z} := \frac{1}{2} \left( \frac{\p}{\p x} + i \frac{\p}{\p y} \right)$ is the first-order Cauchy-Riemann differential operator and $A$ and $B$ are taken to be in a Lebesgue space $L^q$, $q>2$, are called Vekua equations. The solutions of a Vekua equation are called generalized analytic functions \cite{Vek} or pseudoanalytic functions \cite{Bers}. Generalized analytic functions are a well-studied class of functions as the Vekua equation is the canonical form of a first-order elliptic equation in the plane and generalized analytic functions recover many properties of the classical holomorphic functions, i.e., solutions $w$ of the Cauchy-Riemann equation $\frac{\p w}{\p\z} = 0$.

            Every solution of a Vekua equation \eqref{eq: vekintroeqn} can be represented as a product of an exponential function that depends on the parameters $A$ and $B$ and a holomorphic function. This representation is called the ``similarity principle'' and is described in the following theorem. 

            \begin{theorem}[``The Basic Lemma'' \cite{Vek}]\label{simprin}
            Let $A, B \in L^q(\disk)$, $q>2$. Every $w: \disk\to\mathbb{C}$ that solves
            \[
                \frac{\p w}{\p\z} = Aw + B\overline{w}
            \]
            has a representation
            \begin{equation}\label{eq: simprinrep}
            w = e^\phi h,
            \end{equation}
            where $e^\phi \in C^{0,\alpha}(\overline{\disk})$ and $h \in Hol(\disk)$.
            \end{theorem}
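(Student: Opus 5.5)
The plan is to absorb the antilinear term $B\overline{w}$ into a new coefficient that depends on $w$ itself, and then remove the resulting linear first-order term with a Cauchy (Pompeiu) transform. Define
\[
  C(z) := \begin{cases} A(z) + B(z)\dfrac{\overline{w(z)}}{w(z)}, & w(z)\neq 0,\\[2mm] A(z)+B(z), & w(z)=0. \end{cases}
\]
Since $|\overline{w}/w| = 1$, we have $|C|\le |A|+|B|$, so $C\in L^q(\disk)$ with $q>2$. The equation then reads $\frac{\p w}{\p\z} = Cw$ almost everywhere; on the zero set this holds because $w=0$ there and, by the Sobolev regularity of $w$, $\frac{\p w}{\p\z}=0$ a.e. on that set.

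Next I take the Cauchy–Pompeiu operator
\[
  T_\disk C(z) = -\frac{1}{\pi}\iint_\disk \frac{C(\zeta)}{\zeta - z}\,d\xi\,d\eta,
\]
and set $\phi := T_\disk C$. The standard facts I will use are that for $C\in L^q(\disk)$ with $q>2$, the function $T_\disk C$ is in $C^{0,\alpha}(\overline{\disk})$ with $\alpha = (q-2)/q$, and that $\frac{\p}{\p\z}T_\disk C = C$ in the weak (Sobolev) sense. Because $\phi$ is bounded and Hölder continuous on $\overline{\disk}$, $e^{\phi}$ is bounded and Hölder continuous as well, and so is $e^{-\phi}$.

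Now put $h := e^{-\phi} w$. By the product rule for $W^{1,1}_{loc}$ functions,
\[
  \frac{\p h}{\p\z} = e^{-\phi}\Bigl(\frac{\p w}{\p\z} - C w\Bigr) = 0
\]
in the sense of distributions. Weyl's lemma for the Cauchy–Riemann operator then gives that $h$ is holomorphic in $\disk$, and hence $w = e^{\phi}h$ with $e^\phi \in C^{0,\alpha}(\overline{\disk})$.

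The main obstacle is justifying the calculus at low regularity. One needs the precise function class of $w$ (Vekua's solutions lie in $W^{1,p}_{loc}$ for some $p>1$). One needs the product rule for $e^{-\phi}w$ when $\phi$ is only in $W^{1,q}$. And one must handle the definition of $C$ on the zero set of $w$. The Hölder estimate for $T_\disk$, which comes from Hölder's inequality applied to the kernel $1/(\zeta-z)$ in $L^{q'}$ with $q'<2$, I take as the classical Vekua result.
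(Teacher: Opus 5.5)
Your proposal is correct and follows essentially the same route as the paper, which cites Vekua and explicitly takes $\phi = T_\disk\bigl[A + B\frac{\overline{w}}{w}\bigr]$. This is exactly your absorption of the antilinear term into a $w$-dependent $L^q$ coefficient, followed by $h = e^{-\phi}w$ and elliptic regularity for the Cauchy--Riemann operator. One small simplification: on $\{w=0\}$ you do not need the Sobolev fact, because the equation itself gives $\frac{\p w}{\p\z} = Aw + B\overline{w} = 0 = Cw$ a.e. there.
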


            The specific form of $\phi$ in the last theorem is 
            \[
                \phi(z) := T_{\disk}\left[A + B\frac{\overline{w}}{w}\right](z),
            \]
            where
            \[
                T_{\disk}[\cdot](z):=-\frac{1}{\pi} \iint_\disk \frac{(\cdot)(\zeta)}{\zeta - z} \,d\xi\,d\eta, \quad\quad \zeta = \xi + i\eta,
            \]
            is the classic Vekua(-Cauchy-Pompeiu-Teodorescu) integral operator.

        \begin{comm}
            For more information on the classic Vekua(-Cauchy-Pompeiu-Teodorescu) integral operator, see, for example, \cite{Vek, BegBook,ellipquasi}.
        \end{comm}

         \subsubsection{Higher-order Vekua Equations}\label{subsubsection: hoiv equations}

            An immediate higher-order generalization of the Cauchy-Riemann equation is to construct a differential equation where the left-hand side of the equation has the Cauchy-Riemann differential operator composed with itself multiple times. In other words, for $n$ a positive integer, consider the equation
            \begin{equation}\label{eq: polyintroeqn}
                \frac{\p^n w}{\p\z^n} = 0.
            \end{equation}
            Solutions of \eqref{eq: polyintroeqn} are called polyanalytic functions and have the form 
            \[
                w = \sum_{k=0}^{n-1} \z^k h_k,
            \]
            where $h_k$ is holomorphic for every $k$. In the special case of $n=2$, \eqref{eq: polyintroeqn} is called the Bitsadze equation, and its solutions are called bianalytic functions. See \cite{Balk}. This idea of iterating a differential operator associated with a first-order complex differential equation is applied to the Vekua equation on $\disk$ in \cite{metahardy, WB3, WBD, WBD2, itvek, itvekbvp} by considering \eqref{eq: vekintroeqn} as 
            \[
            \left( \frac{\p}{\p\z} - A- BC\right)w = 0,
            \]
            where $C$ is the operator that applies complex conjugation, and iterating the operator on the left-hand side. The result is the higher-order iterated Vekua (or HOIV) equation
            \begin{equation}\label{eq: hoivintro}
                \left( \frac{\p}{\p\z} - A- BC\right)^n w = 0,
            \end{equation}
            where $n$ is a positive integer and the functions $A, B$ are taken to be in a Sobolev space $W^{n-1, q}(\disk)$, $q>2$. Solutions of \eqref{eq: hoivintro}, called HOIV functions or generalized polyanalytic functions, are similar in form to the polyanalytic functions except that, instead of being a polynomial in $\z$ with holomorphic coefficients, the functions are polynomials in $z+\z$ with coefficients that are solutions of the associated first-order Vekua equation. The following theorem makes this precise. 

            \begin{theorem}[Theorem 3.1 \cite{WBD}]\label{thm: wbdrep}
                Let $n$ be a positive integer and $A, B \in W^{n-1, q}(\disk)$, $q>2$. A function $w :\disk\to\mathbb{C}$ is a solution of the higher-order iterated Vekua equation 
                \[
                \left( \frac{\p}{\p\z} - A- BC\right)^n w = 0
                \]
                if and only if $w$ has the form 
                \begin{equation}\label{eq: hoivrepintrothm}
                    w(z) = \sum_{k=0}^{n-1} (z+\z)^k \varphi_k(z),
                \end{equation}
                where $\varphi_k$ is a solution of the Vekua equation
                \[
                    \frac{\p \varphi_k}{\p\z} = A \varphi_k + B\overline{\varphi_k},
                \]
                for every $k$.
            \end{theorem}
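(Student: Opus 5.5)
We argue by induction on $n$, using the operator $L := \frac{\p}{\p\z} - A - BC$. The key observation is that $z+\z$ is real-valued and satisfies $\frac{\p}{\p\z}(z+\z) = 1$. For $n=1$ the statement is just the definition of a solution of the Vekua equation.

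\emph{Commutation identity.} First we compute the action of $L$ on a product $r\varphi$, where $r=(z+\z)^k$ is real and $\varphi$ is sufficiently smooth. Since $\overline{r\varphi}=r\overline{\varphi}$, the conjugation operator commutes with multiplication by $r$, and the Leibniz rule gives
\[
L\big[(z+\z)^k\varphi\big] = k(z+\z)^{k-1}\varphi + (z+\z)^k L\varphi .
\]
Hence, if $L\varphi=0$, then $L[(z+\z)^k\varphi]=k(z+\z)^{k-1}\varphi$, and iterating,
\[
L^j\big[(z+\z)^k\varphi\big]=\frac{k!}{(k-j)!}(z+\z)^{k-j}\varphi \quad\text{for } j\le k, \qquad L^j\big[(z+\z)^k\varphi\big]=0 \quad\text{for } j>k .
\]
This already proves sufficiency: every function of the form \eqref{eq: hoivrepintrothm} is annihilated by $L^n$.

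\emph{Necessity.} Suppose $L^n w=0$ and set $v:=L^{n-1}w$, so that $Lv=0$. We want to subtract a term that kills the top iterate, so define
\[
\varphi_{n-1}:=\frac{v}{(n-1)!}, \qquad \tilde w := w-(z+\z)^{n-1}\varphi_{n-1}.
\]
Then $\varphi_{n-1}$ solves the Vekua equation, and by the identity above
\[
L^{n-1}\tilde w = v - (n-1)!\,\varphi_{n-1} = 0 .
\]
By the induction hypothesis, $\tilde w=\sum_{k=0}^{n-2}(z+\z)^k\varphi_k$ with each $\varphi_k$ a solution of the Vekua equation. Adding back the subtracted term yields \eqref{eq: hoivrepintrothm} for $w$.

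\emph{Main obstacle: regularity.} The induction requires that $L^{n-1}w$ and the intermediate iterates make sense, and that the Leibniz rule holds for the products involved. We would interpret the equation in the weak (Sobolev) sense. The hypothesis $A,B\in W^{n-1,q}$ with $q>2$ ensures that each application of $L$ loses at most one derivative. Together with the similarity principle, Theorem \ref{simprin}, this gives $\varphi_{n-1}=e^{\phi}h$ with $\varphi_{n-1}$ as regular as needed, so that $(z+\z)^{n-1}\varphi_{n-1}$ lies in the same class as $w$. We expect to check this by tracking $W^{k,q}_{loc}$ regularity through the iterates. Everything else is the algebraic computation above.
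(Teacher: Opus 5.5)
The paper gives no proof of this statement; it quotes it from \cite{WBD}. Your argument is correct on its own terms. The identity $L[(z+\z)^k\varphi]=k(z+\z)^{k-1}\varphi+(z+\z)^kL\varphi$ relies on exactly two facts: $z+\z$ is real, and $\frac{\p}{\p\z}(z+\z)=1$. It gives sufficiency, and peeling off $\varphi_{n-1}=L^{n-1}w/(n-1)!$ closes the induction. One point is worth stating explicitly: $L$ is only $\mathbb{R}$-linear. This is harmless, because every constant you move through $L$ (namely $1/(n-1)!$ and $k!/(k-j)!$) is real.

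Your regularity paragraph overcomplicates matters, and one ingredient you name there is not the right one.

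\textbf{Regularity is automatic.} Read $L^nw=0$ with the iterates $L^jw$, $j<n$, defined successively, say each in $W^{1,1}_{loc}(\disk)$. Then your identity already shows that all iterates of $(z+\z)^{n-1}\varphi_{n-1}$ are defined. So $\tilde w$ is automatically an admissible solution of order $n-1$, and nothing needs to be tracked.

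\textbf{The similarity principle does not supply smoothness.} If you want a class such as $W^{n,q}_{loc}(\disk)$, get it by bootstrapping $\frac{\p\varphi}{\p\z}=A\varphi+B\overline{\varphi}$ with $A,B\in W^{n-1,q}$ and using interior regularity of $\frac{\p}{\p\z}$. In $\varphi=e^{\phi}h$ with $\phi=T_\disk[A+B\overline{\varphi}/\varphi]$, the factor $e^{\phi}$ is in general no better than $W^{1,q}$, because $\overline{\varphi}/\varphi$ is discontinuous at the zeros of $\varphi$.

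\textbf{Keep any such class local.} For a global class like $W^{2,q}(\disk)$, your claim that $(z+\z)^{n-1}\varphi_{n-1}$ lies in the same class as $w$ already fails for $A=B=0$ and $n=2$. Take $w=h_0+\z h_1$ with $h_0''=(1-z)^{-1}$ and $h_1''=-z(1-z)^{-1}$. Then $\frac{\p^2 w}{\p z^2}=\z+(1-\z)/(1-z)$ is bounded, so $w\in W^{2,q}(\disk)$. But $(z+\z)h_1\notin W^{2,q}(\disk)$, because $h_1''$ behaves like $-(1-z)^{-1}$ near $z=1$.
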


             Using \eqref{eq: simprinrep}, we can rearrange \eqref{eq: hoivrepintrothm} and see that solutions of 
            \[
                \left( \frac{\p}{\p\z} - A \right)^n w = 0
            \]
            have the form 
            \begin{align*}
                w(z) &= \sum_{k=0}^{n-1} (z+\z)^k \varphi_k(z)\\
                &= \sum_{k=0}^{n-1} (z+\z)^k e^{T_\disk[A](z)} g_k(z)  \\
                &= e^{T_\disk[A](z)} \sum_{k=0}^{n-1} (z+\z)^k  g_k(z) \\
                &= e^{T_\disk[A](z)} \sum_{k=0}^{n-1} \z^k h_k(z),
            \end{align*}
            where $g_k, h_k$ are holomorphic functions, for every $k$. This is made precise with the following corollary of Theorem \ref{thm: wbdrep}. 

            \begin{corr}[Theorem 3.3 \cite{WB3}]\label{corr: hoivrepbzero}
                Let $n$ be a positive integer and $A \in W^{n-1, q}(\disk)$, $q>2$. Every solution of 
                \[
                     \left( \frac{\p}{\p\z} - A \right)^n w = 0
                \]  
                has the form 
                \[
                    w(z) = e^{T_\disk[A](z)}\sum_{k=0}^{n-1} \z^k h_k(z),
                \]
                where $h_k \in H(\disk)$, for every $k$. 
            \end{corr}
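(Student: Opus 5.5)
The plan is to specialize Theorem \ref{thm: wbdrep} to $B\equiv 0$, and then use the similarity principle (Theorem \ref{simprin}) with an explicit exponent. With $B=0$, Theorem \ref{thm: wbdrep} says that $w$ solves $\left(\frac{\p}{\p\z}-A\right)^n w=0$ exactly when
\[
w=\sum_{k=0}^{n-1}(z+\z)^k\varphi_k ,
\]
where each $\varphi_k$ solves the linear equation $\frac{\p\varphi_k}{\p\z}=A\varphi_k$.

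For such a $\varphi_k$ we do not need the general Basic Lemma, whose exponent depends on $w$. Instead set $\phi:=T_\disk[A]$. Since $A\in L^q(\disk)$ with $q>2$, the standard properties of $T_\disk$ give $\frac{\p}{\p\z}T_\disk[A]=A$ in the weak (Sobolev) sense, and $T_\disk[A]$ is H\"older continuous. Define $g_k:=e^{-T_\disk[A]}\varphi_k$. The product rule gives
\[
\frac{\p g_k}{\p\z}=e^{-T_\disk[A]}\left(\frac{\p\varphi_k}{\p\z}-A\varphi_k\right)=0
\]
in the weak sense. By Weyl's lemma for $\frac{\p}{\p\z}$, $g_k$ is then holomorphic on $\disk$, so $\varphi_k=e^{T_\disk[A]}g_k$.

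Substituting, we get $w=e^{T_\disk[A]}\sum_{k=0}^{n-1}(z+\z)^k g_k$. Expand each $(z+\z)^k=\sum_{j=0}^k\binom{k}{j}z^{k-j}\z^{j}$ and collect by powers of $\z$:
\[
h_j(z):=\sum_{k=j}^{n-1}\binom{k}{j}z^{k-j}g_k(z),
\]
which is holomorphic because it is a finite sum of products of polynomials in $z$ and holomorphic functions. This yields $w=e^{T_\disk[A]}\sum_{j=0}^{n-1}\z^j h_j$, as claimed.

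The only delicate step is the regularity justification that $g_k$ is genuinely holomorphic. This requires the Sobolev regularity of $\varphi_k$ and the identity $\frac{\p}{\p\z}T_\disk[A]=A$ in $L^q$, so that the product rule is valid for $W^{1,q}$-type functions. I would cite the standard facts from \cite{Vek} for this and then invoke Weyl's lemma. The algebraic regrouping is routine.
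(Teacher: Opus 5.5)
Your proposal is correct and follows the same route as the paper's derivation just before this corollary: you specialize Theorem \ref{thm: wbdrep} to $B\equiv 0$, write each $\varphi_k = e^{T_\disk[A]} g_k$ with $g_k$ holomorphic, and regroup the powers of $z+\z$ into powers of $\z$ with holomorphic coefficients. The differences are cosmetic. You check the $B\equiv 0$ case of the similarity principle directly (via $g_k = e^{-T_\disk[A]}\varphi_k$ and Weyl's lemma) rather than citing Theorem \ref{simprin}, and you give the explicit coefficients $h_j = \sum_{k=j}^{n-1}\binom{k}{j} z^{k-j} g_k$, which the paper leaves implicit.
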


            By Theorem \ref{simprin}, since every function $f(z) := e^{T_\disk[A](z)}h(z)$, where $h \in Hol(\disk)$, is a solution of $\frac{\p f}{\p\z} = Af$, we also have the following representation. 

            \begin{corr}[Theorem 3.1 \cite{WBD}, Theorem 6.3 \cite{BCHoiv}] \label{cor: altbzerohoivrep}
                Let $n$ be a positive integer and $A \in W^{n-1, q}(\disk)$, $q>2$. Every solution of 
                \[
                     \left( \frac{\p}{\p\z} - A \right)^n w = 0
                \]  
                has the form 
                \[
                    w(z) = \sum_{k=0}^{n-1} \z^k \varphi_k(z),
                \]
                where $\varphi_k$ solves
                \[
                    \frac{\p \varphi_k}{\p\z} = A\varphi_k,
                \] 
                for every $k$. 
            \end{corr}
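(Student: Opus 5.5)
The plan is to derive Corollary \ref{cor: altbzerohoivrep} from Corollary \ref{corr: hoivrepbzero} by absorbing the exponential factor into each holomorphic coefficient. Let $w$ solve $\left( \frac{\p}{\p\z} - A \right)^n w = 0$. Corollary \ref{corr: hoivrepbzero} gives holomorphic $h_0,\dots,h_{n-1}$ on $\disk$ with
\[
w(z) = e^{T_\disk[A](z)}\sum_{k=0}^{n-1} \z^k h_k(z).
\]
Since multiplication is commutative, I would rewrite this as
\[
w(z) = \sum_{k=0}^{n-1} \z^k \varphi_k(z), \qquad \varphi_k := e^{T_\disk[A]} h_k .
\]
It then remains to show that each $\varphi_k$ solves $\frac{\p \varphi_k}{\p\z} = A\varphi_k$.

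For this step I use the standard property of the Vekua operator: $\frac{\p}{\p\z} T_\disk[A] = A$ in the weak (Sobolev) sense for $A\in L^q(\disk)$, $q>2$. Here $A \in W^{n-1,q}(\disk)$ is even more regular. Moreover $T_\disk[A]$ is Hölder continuous, so $e^{T_\disk[A]}$ is well defined and continuous. Applying the chain rule and the product rule for $\p/\p\z$ in $W^{1,q}_{loc}$, and using $\frac{\p h_k}{\p\z}=0$, gives
\[
\frac{\p \varphi_k}{\p\z} = \frac{\p T_\disk[A]}{\p\z}\, e^{T_\disk[A]} h_k + e^{T_\disk[A]}\frac{\p h_k}{\p\z} = A\varphi_k .
\]
This is exactly the observation recorded before the corollary, namely the $B=0$ case of Theorem \ref{simprin}, where $\phi = T_\disk[A]$.

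The only delicate point is justifying the chain and product rules in the generalized-derivative setting. First, $T_\disk[A]\in W^{1,q}_{loc}$ and is bounded, so its exponential lies in $W^{1,q}_{loc}$ with the usual chain rule. Second, $h_k$ is smooth on compact subsets of $\disk$. Both facts are standard in \cite{Vek}, so I expect no real obstacle there; the rest is bookkeeping.
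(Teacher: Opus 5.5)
Your proposal is correct and follows the paper's own reasoning. The paper obtains this corollary from Corollary \ref{corr: hoivrepbzero} by observing that $e^{T_\disk[A]}h$ solves $\frac{\p f}{\p\z} = Af$ for every $h \in Hol(\disk)$, which is exactly your step of setting $\varphi_k := e^{T_\disk[A]}h_k$. Your direct check via $\frac{\p}{\p\z}T_\disk[A] = A$ and the chain and product rules is slightly more careful than the paper: the paper attributes this fact to Theorem \ref{simprin}, but that theorem only says every solution has the form $e^{\phi}h$, not that every such product is a solution.
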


            Note that the restriction to $\disk$ here is unnecessary, as all the statements above are also true with $\disk$ replaced with a bounded simply connected domain (we will not consider domains in the sequel with boundary less regular than smooth), and an immediate generalization of Corollary \ref{corr: hoivrepbzero} is the following.

            \begin{theorem}\label{thm: hoivrepgendom}
                Let $n$ be a positive integer, $D$ be a bounded simply connected subset of $\mathbb{C}$, and $A \in W^{n-1, q}(D)$, $q>2$. Every solution of 
                \[
                     \left( \frac{\p}{\p\z} - A \right)^n w = 0
                \]  
                has the form 
                \[
                    w(z) = e^{T_D[A](z)}\sum_{k=0}^{n-1} \z^k h_k(z),
                \]
                where $h_k \in Hol(D)$, for every $k$, and 
                \[
                    T_D[A](z) = -\frac{1}{\pi} \iint_D \frac{A(\zeta)}{\zeta - z} \,d\xi\,d\eta, \quad\quad \zeta = \xi + i\eta.
                \]         
            \end{theorem}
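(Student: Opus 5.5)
The plan is to reduce the equation on $D$ to the polyanalytic equation by factoring out the exponential $e^{T_D[A]}$, using only the standard properties of the Vekua operator $T_D$. The key fact is that for $A \in W^{n-1,q}(D)$ with $q>2$, the function $\phi := T_D[A]$ is continuous (indeed H\"older) on $\overline{D}$ and satisfies $\frac{\p \phi}{\p\z} = A$ in $D$ in the weak (Sobolev) sense. Moreover, $\phi \in W^{n,q}_{loc}(D)$, so that $\frac{\p \phi}{\p\z}$ has the regularity of $A$. This is the same input used on $\disk$ in Corollary \ref{corr: hoivrepbzero}; the disk played no special role beyond these properties of $T$.

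Given a solution $w$, set $v := e^{-\phi} w$. The central step is the commutation identity
\[
\left(\frac{\p}{\p\z} - A\right)(e^{\phi} v) = e^{\phi}\frac{\p v}{\p\z},
\]
which follows from the product rule and $\frac{\p \phi}{\p\z} = A$. Iterating it $n$ times gives
\[
\left(\frac{\p}{\p\z} - A\right)^n w = e^{\phi}\frac{\p^n v}{\p\z^n}.
\]
Each iteration needs the intermediate function $\frac{\p^k v}{\p\z^k}$ to lie in a space where the product rule with $e^{\phi}$ holds. That is guaranteed by $A \in W^{n-1,q}$: the first $n-1$ derivatives of $e^{\phi}$ needed along the way exist in $L^q$. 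Since $e^{\phi}$ never vanishes, $w$ solves the iterated equation if and only if $\frac{\p^n v}{\p\z^n}=0$ in $D$.

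Next, we identify the solutions of this polyanalytic equation. By Weyl's lemma for the elliptic operator $\frac{\p^n}{\p\z^n}$, the function $v$ is smooth. We then peel off one $\z$-power at a time. The function $u_{n-1}:=\frac{\p^{n-1}v}{\p\z^{n-1}}$ is holomorphic, and we set $h_{n-1} := u_{n-1}/(n-1)!$. Then $v - \z^{n-1}h_{n-1}$ satisfies the equation of order $n-1$, and induction yields $v = \sum_{k=0}^{n-1}\z^k h_k$ with $h_k \in Hol(D)$. This step uses only that $D$ is open; simple connectivity is not needed here. Multiplying back by $e^{\phi}$ gives the stated form.

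The main obstacle is justifying the regularity in the iteration step on a general bounded domain $D$, rather than on $\disk$. Specifically, one needs that $T_D$ maps $W^{k,q}(D)$ into $W^{k+1,q}_{loc}(D)$ and satisfies $\frac{\p}{\p\z}T_D = I$. I would handle this by extending $A$ by zero to $\C$. Then $T_D[A] = T_{\C}[A\chi_D]$, and locally in $D$ this agrees with $T$ applied to a $W^{n-1,q}$ function. The interior estimates then come from the Calder\'on--Zygmund boundedness of the Beurling transform $\Pi = \frac{\p}{\p z}T$ on $L^q$, which is exactly as in the disk case cited from \cite{Vek}.
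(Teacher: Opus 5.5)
Your argument is correct, but it takes a different route from the paper. The paper gives no separate argument on $D$. It asserts that Corollary \ref{corr: hoivrepbzero} transfers from $\disk$, and that corollary is obtained in three steps:
\begin{enumerate}
\item invoke the general representation of Theorem \ref{thm: wbdrep}, $w=\sum_k(z+\z)^k\varphi_k$ with $\frac{\p\varphi_k}{\p\z}=A\varphi_k$;
\item apply the similarity principle (Theorem \ref{simprin} with $B\equiv0$, whose exponent is $T[A]$) to each coefficient, giving $\varphi_k=e^{T[A]}g_k$;
\item re-expand $\sum_k(z+\z)^kg_k$ as $\sum_k\z^kh_k$.
\end{enumerate}
You instead conjugate the whole operator at once, $\left(\frac{\p}{\p\z}-A\right)^n=e^{\phi}\frac{\p^n}{\p\z^n}e^{-\phi}$, which is just the $n=1$ similarity computation iterated. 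This lands you directly on polyanalytic functions.

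Your version has several advantages. It is self-contained and uses only basic properties of $T_D$: $T_D[A]$ is continuous, lies in $W^{1,q}_{loc}(D)$, and satisfies $\frac{\p}{\p\z}T_D[A]=A$. It gives an equivalence rather than one implication. It also actually carries out on a general bounded $D$ what the paper only asserts, confirming that simple connectivity plays no role. The paper's version, by contrast, keeps the exposition uniform with the cited machinery for $B\not\equiv0$. There no such exponential conjugation exists, because the similarity exponent $T[A+B\overline{w}/w]$ depends on $w$.

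Two minor points. First, your regularity bookkeeping is heavier than necessary. Set $w_k:=\left(\frac{\p}{\p\z}-A\right)^kw$ and $v_k:=e^{-\phi}w_k$. The product rule together with $\frac{\p}{\p\z}e^{-\phi}=-Ae^{-\phi}$ then gives $\frac{\p v_k}{\p\z}=v_{k+1}$ directly. So no higher derivatives of $e^{\phi}$ are needed, and the $W^{n-1,q}$ hypothesis serves only to make the iterated operator meaningful. Second, after extension by zero, $T_D[A]$ near a point of $D$ coincides with $T$ of a compactly supported $W^{n-1,q}$ function only up to a summand that is holomorphic near that point. This is harmless for the interior regularity you want.
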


    \subsection{Dirichlet Problem}

        \subsubsection{Harmonic Functions}

            The Dirichlet problem for harmonic functions is
            \begin{equation}\label{eq: classicaldirichlet}
                \begin{cases}
                    \Delta u := 4 \frac{\p^2 w}{\p z \p\z} = 0,  & \text{ in } D,\\
                    u = f, & \text{ on } \p D,
                \end{cases}
            \end{equation}
            for $D \subset \mathbb{C}$ a bounded simply connected domain with smooth boundary and $f$ a nice enough function (or a distribution). This Dirichlet problem is a well-defined boundary value problem in the plane. This means that, for a choice of $f$, there is a \textit{unique} solution to the problem. In particular, when $f \equiv 0$, the problem is only trivially solvable. See, for example, \cite{GK,BegBook, Straube}.

        \subsubsection{Bianalytic Functions}\label{sec: 2.2.2}

            In contrast, it is known that if the Laplace operator $\Delta := 4 \frac{\p^2 }{\p z \p\z}$ is replaced with the Bitsadze operator $\frac{\p^2 }{\p\z^2}$, i.e., the differential operator associated with second-order polyanalytic (called bianalytic) functions, then the resulting Dirichlet problem 
            \[
                \begin{cases}
                    \frac{\p^2 w}{\p\z^2} = 0,  & \text{ in } D,\\
                    w = f, & \text{ on } \p D,
                \end{cases}
            \]
            is not well-defined when $D = \disk$. This is the content of the following proposition attributed to Bitsadze \cite{Bitsadze1948}. We recall the statement as it appears in \cite{Beg04}. 

            \begin{prop}[Lemma 2 \cite{Beg04}]\label{prop: dirichletbianalyticcircle}
                The Dirichlet problem for the Bitsadze equation
                \begin{equation}\label{eq: classicBitsadzeDirichlet}
                    \begin{cases}
                    \frac{\p^2 w}{\p\z^2} = 0,  & \text{ in } \disk,\\
                    w = 0, & \text{ on } \p \disk,
                \end{cases}
                \end{equation}
                has infinitely many solutions.
            \end{prop}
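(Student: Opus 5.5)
The plan is to exhibit an explicit infinite family of nontrivial bianalytic functions that vanish on the unit circle. Every bianalytic function has the form $w = h_0(z) + \z\, h_1(z)$ with $h_0, h_1$ holomorphic, the case $n=2$ of the representation of polyanalytic functions recalled in Subsection \ref{subsubsection: hoiv equations}. On $\p\disk$ we have $\z = 1/z$, so the boundary condition $w = 0$ on $\p\disk$ reads $h_0(z) + z^{-1}h_1(z) = 0$ for $|z|=1$. This suggests choosing $h_0 = -z h$ and $h_1 = h$ for an arbitrary holomorphic $h$.

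Concretely, for any $h \in Hol(\disk)$ that extends continuously to $\overline{\disk}$ (for instance any polynomial, or $h(z) = z^m$ with $m \geq 0$), I will set
\[
w(z) := (|z|^2 - 1)\,h(z) = \z z\, h(z) - h(z).
\]
The key steps are as follows.
\begin{enumerate}[(i)]
\item Check that $w$ is bianalytic. Since $h$ is holomorphic, $\frac{\p w}{\p\z} = z h(z)$, which is holomorphic, so $\frac{\p^2 w}{\p\z^2} = 0$ in $\disk$.
\item Check the boundary condition. Since $|z|=1$ on $\p\disk$, $w$ vanishes there, and continuity up to the boundary holds because $h$ is continuous on $\overline{\disk}$.
\item Show that distinct choices give distinct solutions. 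The map $h \mapsto (|z|^2-1)h$ is injective, because $|z|^2-1 \neq 0$ in $\disk$. Hence the family $\{(|z|^2-1)z^m\}_{m\ge 0}$ is an infinite, indeed linearly independent, set of solutions of \eqref{eq: classicBitsadzeDirichlet}.
\end{enumerate}

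No step is really an obstacle. The only point needing care is the regularity class in which solutions are sought, which is why I restrict to polynomial $h$: then $w$ is smooth on $\overline{\disk}$ and the boundary condition holds in the classical sense.

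Optionally, one can note that the family found is all solutions. If $w = h_0 + \z h_1$ vanishes on the circle, then $z h_0 + h_1 = 0$ there, and by the identity theorem for holomorphic functions continuous to the boundary, $h_1 = -z h_0$. This gives $w = (1-|z|^2)h_0$. That characterization is not needed for the statement, however.
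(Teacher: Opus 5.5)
Your proof is correct and matches the paper. The paper cites the result from Begehr without proving it, and the remark right after it gives exactly your family up to sign: $w_k(z) = (1-z\z)z^k$, $k \geq 0$. One small point concerns your optional characterization, which the statement does not need. Deducing $zh_0 + h_1 \equiv 0$ from its vanishing on $\p\disk$ uses the maximum modulus principle, not the identity theorem. It also assumes that $h_0$ and $h_1$ extend continuously to $\overline{\disk}$, which does not follow automatically from continuity of $w$.
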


            \begin{comm}The collection of functions $\{w_k\}_{k=0}^\infty$, where 
            \[
                w_k(z) = (1-z\z)z^k,
            \]
            for each $k$, is one example of an infinite collection of solutions to the problem \eqref{eq: classicBitsadzeDirichlet}. \end{comm}

            This pathological behavior is surprising as the unit disk $\disk$ and its boundary the unit circle $\p\disk$ are often the model domain and corresponding boundary for desirable phenomena in the complex plane. Recently, Moreno Garc\'ia, et al., in \cite{Dirichletbianalytic}, show the bianalytic Dirichlet problem \textit{is} well-defined when the boundary set is a nondegenerate conic 
            \[
                Q(x,y) = ax^2 + bxy + cy^2 + dx + ey + f = 0,
            \]
            where $a,b,c,d,e,f \in \mathbb{R}$, that is not a circumference and the boundary function is a polynomial. This means the cases $a=c$ and $b = 0$ are excluded. We recall the relevant theorem from \cite{Dirichletbianalytic} below.

            \begin{theorem}[Theorem 3.1 \cite{Dirichletbianalytic}]\label{thm: dirichletbianalyticwellposed}
                Let $Q(x,y) = 0$ be a non-degenerate conic that is not a circumference, $\Gamma = \{(x,y) \in \mathbb{R}^2 : Q(x,y) = 0\}$, and $P$ be a polynomial in $x$ and $y$. The Dirichlet problem 
                \[
                    \begin{cases}
                        \frac{\p^2 w}{\p\z^2}  = 0, & \text{ in } \mathbb{R}^2\setminus \Gamma,\\
                        w = P, & \text{ on } \Gamma,
                    \end{cases}
                \]
                has a unique solution.
            \end{theorem}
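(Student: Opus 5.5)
Since a solution of $\frac{\p^2 w}{\p\z^2}=0$ has the form $w=h_0+\z h_1$ with $h_0,h_1$ holomorphic, the plan is to use the conic to eliminate $\z$ in favour of $z$. I would write $\z=u(z)$ on $\Gamma$ for a suitable explicit algebraic function $u$, and then show that the boundary condition forces a unique choice of $h_0,h_1$.

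The first step is to rewrite $Q$ in complex coordinates, substituting $x=(z+\z)/2$ and $y=(z-\z)/(2i)$. This gives
\[
\alpha\z^2+\beta z\z+\bar\alpha z^2+\gamma\z+\bar\gamma z+f=0,
\]
where $\alpha=\frac{a-c}{4}+\frac{b}{4}i$ and $\beta=\frac{a+c}{2}$. The conic fails to be a circumference exactly when $\alpha\neq0$. In that case, on $\Gamma$, $\z$ is a root of a genuine quadratic in $\z$ whose coefficients are polynomial in $z$. In the same way, any polynomial $P(x,y)$ is a polynomial $\tilde P(z,\z)$. Using the quadratic relation, I would reduce $\tilde P$ modulo $Q$ in the variable $\z$, obtaining
\[
\tilde P\equiv p_0(z)+\z\, p_1(z) \pmod Q,
\]
with $p_0,p_1$ polynomials. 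The function $w=p_0+\z p_1$ is then bianalytic and equals $P$ on $\Gamma$, which gives existence.

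For uniqueness, I would suppose $h_0+\z h_1=0$ on $\Gamma$, with $h_j$ entire, since the domain $\mathbb{R}^2\setminus\Gamma$ contains unbounded components and solutions are meant globally. I would argue as follows.
\begin{enumerate}
\item On $\Gamma$ we have $\z=-h_0/h_1$ wherever $h_1\neq0$.
\item Substituting into the quadratic relation gives the holomorphic function
\[
F(z)=\alpha h_0^2-\beta z h_0h_1+\bar\alpha z^2h_1^2-\gamma h_0h_1+\bar\gamma z h_1^2+f h_1^2 .
\]
This $F$ vanishes on $\Gamma$, which is a real curve with accumulation points, so $F\equiv0$.
\item Consequently $h_0/h_1$ is a meromorphic branch of $\z$ as a function of $z$ on the conic. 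The discriminant of the quadratic in $\z$ is a non-square polynomial in $z$ for a non-degenerate conic, so the roots are genuinely two-valued algebraic functions with branch points, and no meromorphic solution exists. This contradiction forces $h_1\equiv0$.
\item Then $h_0=0$ on $\Gamma$, and by the identity theorem $h_0\equiv0$.
\end{enumerate}

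The main obstacle is item 3, showing that the discriminant
\[
(\beta z+\gamma)^2-4\alpha(\bar\alpha z^2+\bar\gamma z+f)
\]
is not a perfect square in $\C[z]$ precisely when the conic is non-degenerate. I would do this by computing the discriminant directly and relating its vanishing to the determinant of the conic's $3\times3$ matrix. I would also check the edge cases. For the parabola the leading term $(\beta^2-4|\alpha|^2)z^2$ vanishes, leaving a linear polynomial, which is automatically a non-square. The case of ellipse versus hyperbola also needs checking. A secondary point is to handle the meaning of "solution on $\mathbb{R}^2\setminus\Gamma$", that is, solutions on each complementary component. Applying the identity theorem on each component through boundary values on the arcs of $\Gamma$ adjacent to it should suffice, provided regularity up to $\Gamma$ is assumed.
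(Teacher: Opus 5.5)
The paper gives no proof of this statement; it is quoted from Moreno Garc\'ia et al. Its later remark uses it as a result about solutions defined on the whole plane. For that reading your argument is correct.

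Reducing $\tilde P$ modulo $Q/\alpha$ in $\mathbb{C}[z][\bar z]$ gives a bianalytic polynomial solution. For uniqueness with $h_0,h_1$ entire you do not even need item 1, since on $\Gamma$ one has $F=h_1^2\,Q(z,\bar z)=0$ directly.

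Item 3 also checks out. In your notation $\operatorname{disc}_z\Delta=16\,\alpha\det M$, where $M$ is the symmetric $3\times3$ matrix of the conic. So for $\alpha\neq0$ the polynomial $\Delta$ is a square in $\mathbb{C}[z]$ exactly when the conic is degenerate. For the parabola, this identity is what makes the linear coefficient $2\beta\gamma-4\alpha\bar\gamma$ nonzero, so that case is not automatic. Then $\Delta$ has a simple zero, and near it no meromorphic $k=2\alpha g+\beta z+\gamma$ can satisfy $k^2=\Delta$.

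The genuine error is your final paragraph. Uniqueness does \emph{not} hold for solutions defined only on the separate components of $\mathbb{R}^2\setminus\Gamma$, so no identity-theorem argument can give it. Item 3 needs $\Delta$ to have no meromorphic square root on the domain of $h_0,h_1$. This fails on any component containing no zero of $\Delta$; those zeros are the foci of $\Gamma$.

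Take the ellipse $x^2/4+y^2=1$ and let
\[
S(z)=\frac{5z-4\sqrt{z^2-3}}{3},
\]
using the branch of $\sqrt{z^2-3}$ that is holomorphic on $\mathbb{C}\setminus[-\sqrt{3},\sqrt{3}]$ and asymptotic to $z$ at infinity. At $z=2\cos t+i\sin t$ this branch equals $\cos t+2i\sin t$, so $S(z)=\bar z$ on $\Gamma$.

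Hence $w=(\bar z-S(z))\,g(z)$, with $g$ holomorphic on the exterior, is a nonzero bianalytic function there. For instance $g=z^{-2}$ even gives $w\to0$ at infinity. This $w$ is real-analytic up to $\Gamma$ and vanishes on $\Gamma$. Adding it to the polynomial solution on the exterior only gives a second solution on $\mathbb{R}^2\setminus\Gamma$, and that second solution is continuous across $\Gamma$. The same construction works on the side of a parabola not containing its focus.

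So the theorem holds only for $w$ bianalytic across $\Gamma$, i.e. $h_0,h_1$ entire or $w$ a polynomial. That is exactly what your main argument proves, and you should drop the component-wise claim. On components that do contain a focus, such as the interior of an ellipse, uniqueness can be recovered. That needs the maximum principle or a boundary uniqueness theorem rather than the identity theorem, and it requires boundary regularity of $h_0,h_1$ themselves, not just of $w$.
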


            \begin{comm} The exclusion of circumferences is strict, as the case of $\p\disk$ ($a = c = 1, b = d = e = 0$, and $f = -1$) in Theorem \ref{prop: dirichletbianalyticcircle} illustrates. 
            \end{comm}

            In \cite{WBD2}, the author with B. B. Delgado showed that Proposition \ref{prop: dirichletbianalyticcircle} generalizes to the Dirichlet problem for the second-order iterated Vekua equation 
            \begin{equation}\label{eq: hoivbzero}
                \left( \frac{\p}{\p\z} - A\right)^2 w = 0.
            \end{equation}
            Since \eqref{eq: hoivbzero} reduces to the Bitsadze equation when $A \equiv 0$, it was called a generalized Bitsadze equation in \cite{WBD2}. We refer to \eqref{eq: hoivbzero} as a Vekua-Bitsadze equation, and its solutions as generalized bianalytic functions. The generalization of Proposition \ref{prop: dirichletbianalyticcircle} from \cite{WBD2} is the following proposition.

            \begin{prop}[Lemma 4.2 \cite{WBD2}]\label{lem: generalbitsadzelindep}
    For $A \in W^{1,q}(\disk)$, $q>2$, the Dirichlet problem for the Vekua-Bitsadze equation
    \begin{equation}\label{dbvp}\begin{cases}
        \left( \frac{\p}{\p\z} - A\right)^2 w = 0, & \text{ in } \disk\\
        w = 0, & \text{ on } \p \disk,
    \end{cases}\end{equation}
    has infinitely many linearly independent solutions.
    \end{prop}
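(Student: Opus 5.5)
The plan is to reduce the problem to the classical Bitsadze case through the similarity principle. Set $\phi := T_\disk[A]$. Since $A \in W^{1,q}(\disk)$ with $q>2$, we have $\frac{\p \phi}{\p\z} = A$ in $\disk$, and $\phi$ is continuous on $\overline{\disk}$. Hence $e^{\phi}$ is a nonvanishing continuous function on $\overline{\disk}$, and it solves $\frac{\p}{\p\z} e^\phi = A e^\phi$.

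For each integer $k \ge 0$, define
\[
    w_k(z) := e^{\phi(z)} (1 - z\z) z^k .
\]
These are exactly the functions $e^{\phi}$ times the Bitsadze solutions from the Remark after Proposition \ref{prop: dirichletbianalyticcircle}.

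First I would verify that each $w_k$ solves the equation. For any smooth $u$, the Leibniz rule gives
\[
    \Big(\frac{\p}{\p\z} - A\Big)(e^\phi u) = e^\phi \Big( A u + \frac{\p u}{\p\z}\Big) - A e^\phi u = e^\phi \frac{\p u}{\p\z}.
\]
Iterating once more yields $\big(\frac{\p}{\p\z} - A\big)^2 (e^\phi u) = e^\phi \frac{\p^2 u}{\p\z^2}$. This iteration uses that $\frac{\p u}{\p\z}$ is again smooth and that the identity holds in the $W^{1,q}$ sense.

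Apply this with $u = (1 - z\z)z^k = z^k - z^{k+1}\z$, which is bianalytic. Then $\frac{\p^2 u}{\p\z^2} = 0$, so $w_k$ solves the Vekua-Bitsadze equation in $\disk$. Alternatively, one can read this off from Corollary \ref{corr: hoivrepbzero} with $h_0 = z^k$ and $h_1 = -z^{k+1}$.

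Next, the boundary condition. On $\p\disk$ we have $z\z = |z|^2 = 1$, and $e^\phi$ is bounded and continuous up to the boundary. Therefore $w_k$ extends continuously to $\overline{\disk}$ and vanishes on $\p\disk$.

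Finally, linear independence. Suppose $\sum_{k=0}^N c_k w_k \equiv 0$ in $\disk$. Dividing by the nonvanishing factor $e^\phi (1 - |z|^2)$, which is nonzero in the open disk, gives $\sum_k c_k z^k \equiv 0$ on $\disk$. Hence all $c_k = 0$. So $\{w_k\}_{k\ge 0}$ is an infinite linearly independent family of solutions.

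The only delicate point is regularity: one must justify the product rule and the second application of $\frac{\p}{\p\z} - A$ for $e^\phi$ when $A$ is only in $W^{1,q}$. The plan is to use the standard facts that $\frac{\p}{\p\z} T_\disk[A] = A$ and that $T_\disk$ maps $W^{1,q}$ into $W^{2,q} \subset C^{1}$ for $q>2$. This makes $e^\phi$ sufficiently regular for the computation above to hold.
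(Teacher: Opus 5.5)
Your proof is correct. The paper gives no proof of its own here (it only cites \cite{WBD2}), but your route is the same reduction the paper relies on throughout, e.g.\ in the proof of Theorem \ref{thm: mainthm}: you write solutions as $e^{T_\disk[A]}$ times a bianalytic function via $\bigl(\frac{\p}{\p\z}-A\bigr)(e^{T_\disk[A]}u)=e^{T_\disk[A]}\frac{\p u}{\p\z}$, and then carry over Bitsadze's family $(1-z\z)z^k$ from the Remark.

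Your regularity worry is milder than you suggest. The second application of $\frac{\p}{\p\z}-A$ acts on $-e^{T_\disk[A]}z^{k+1}$, so it needs only $\frac{\p}{\p\z}T_\disk[A]=A$. Also, Corollary \ref{corr: hoivrepbzero} as stated gives only the ``every solution has this form'' direction, so your direct computation, not the corollary, is what justifies that each $w_k$ is a solution.
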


        \begin{comm}
            While Theorem \ref{thm: dirichletbianalyticwellposed} guarantees solutions to the associated Dirichlet problem that are defined on the entire plane, we focus in this work on solutions that are defined on the region enclosed by the specified boundary. This avoids technical considerations concerning the coefficient function $A$ on unbounded domains in the results that follow. 
        \end{comm}

            The similarity between the Dirichlet problem for the Bitsadze equation and the Dirichlet problem for the Vekua-Bitsadze equation indicated by the conclusions of Proposition \ref{prop: dirichletbianalyticcircle} and Proposition \ref{lem: generalbitsadzelindep} provides inspiration that the conclusion of Theorem \ref{thm: dirichletbianalyticwellposed} also holds for the Dirichlet problem for the Vekua-Bitsadze equation. However, the boundary functions are not of the same class. Now, the boundary function is a product of a polynomial and an exponential function that is determined by the specific Vekua-Bitsadze equation. This generalizes Theorem 3.1 from \cite{Dirichletbianalytic} to the Vekua-Bitsadze equation, and we prove the theorem below. 

    \begin{theorem}\label{thm: mainthm}
        Let $Q(x,y) = 0$ be a non-degenerate conic that is not a circumference, $\Gamma = \{(x,y) \in \mathbb{R}^2 : Q(x,y) = 0\}$, $D$ be the region enclosed by $\Gamma$, $P$ be a polynomial in $x$ and $y$, and $A \in W^{1,q}(D)$, $q>2$. The Dirichlet problem 
        \begin{equation}\label{eq: complexbitsadzedirichlet}
            \begin{cases}
                \left( \frac{\p}{\p\z} - A \right)^2 w = 0, & \text{ in } D,\\
                w = e^{T_D[A]}P, & \text{ on } \Gamma,
            \end{cases}
        \end{equation}
        has a unique solution. 
    \end{theorem}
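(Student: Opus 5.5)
The plan is to reduce \eqref{eq: complexbitsadzedirichlet} to the bianalytic problem of Theorem \ref{thm: dirichletbianalyticwellposed} by factoring out the exponential $e^{T_D[A]}$. The key identity is that for any sufficiently smooth $v$,
\[
    \left(\frac{\p}{\p\z} - A\right)\left(e^{T_D[A]} v\right) = e^{T_D[A]}\frac{\p v}{\p\z}.
\]
This holds because $\frac{\p}{\p\z} T_D[A] = A$ in $D$, which is the standard property of the Vekua–Cauchy–Pompeiu–Teodorescu operator. Iterating gives
\[
    \left(\frac{\p}{\p\z} - A\right)^2\left(e^{T_D[A]} v\right) = e^{T_D[A]}\frac{\p^2 v}{\p\z^2}.
\]
Since $e^{T_D[A]}$ never vanishes, $w = e^{T_D[A]}v$ solves the Vekua–Bitsadze equation exactly when $v$ is bianalytic in $D$. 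This is consistent with Theorem \ref{thm: hoivrepgendom}, which already says every solution has the form $e^{T_D[A]}(h_0 + \z h_1)$ with $h_0, h_1 \in Hol(D)$.

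For existence, I will apply Theorem \ref{thm: dirichletbianalyticwellposed} with boundary data $P$ to obtain a bianalytic $v$ with $v = P$ on $\Gamma$. I will then set $w = e^{T_D[A]}v$ on $D$. By the identity above, $w$ solves the equation, and on $\Gamma$ we have $w = e^{T_D[A]}P$. Here I take $e^{T_D[A]}$ to be continuous up to $\overline{D}$, which follows from the Hölder continuity of $T_D[A]$ for $A \in L^q$, $q>2$. Theorem \ref{thm: dirichletbianalyticwellposed} gives $v$ on $\mathbb{R}^2\setminus\Gamma$, so I will restrict it to the bounded region $D$, as the remark following Proposition \ref{lem: generalbitsadzelindep} indicates. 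In the unbounded cases (parabola, hyperbola) I will interpret $D$ as the appropriate component.

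For uniqueness, suppose $w_1$ and $w_2$ both solve the problem. By Theorem \ref{thm: hoivrepgendom}, each can be written $w_j = e^{T_D[A]}v_j$ with $v_j$ bianalytic in $D$. On $\Gamma$ we have $e^{T_D[A]}v_j = e^{T_D[A]}P$, and dividing by the nonvanishing exponential gives $v_j = P$ on $\Gamma$. The difference $v_1 - v_2$ is then bianalytic in $D$ and vanishes on $\Gamma$.

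The main obstacle is concluding from this that $v_1 = v_2$. This requires the uniqueness part of Theorem \ref{thm: dirichletbianalyticwellposed}, namely that the only bianalytic function on $D$ vanishing on a non-circular conic is zero. The issue is that the cited theorem is stated for $\mathbb{R}^2\setminus\Gamma$, not for the interior domain $D$ alone. I would handle this by revisiting the argument of \cite{Dirichletbianalytic}. Writing $v = h_0 + \z h_1$ with $h_0, h_1$ holomorphic and vanishing on $\Gamma$, on $\Gamma$ one can express $\z$ as an algebraic function $S(z)$ of $z$, the Schwarz function of the conic. This gives $h_0 + S h_1 = 0$ on $\Gamma$, hence in $D$ by analytic continuation. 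For a non-circular conic, $S$ has branch points inside or on the closure of the region (for an ellipse, at the foci), and this forces $h_1 = 0$ and then $h_0 = 0$. For a circle, by contrast, $S(z) = 1/z$ is meromorphic, which is exactly where nonuniqueness arises. If the cited theorem's uniqueness transfers directly to $D$, this step is immediate; otherwise this Schwarz-function branching argument is what I would carry out.
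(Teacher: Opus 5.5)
Your proposal is correct and is essentially the paper's proof: the paper also writes every solution as $e^{T_D[A]}(h_0+\z h_1)$ by Theorem \ref{thm: hoivrepgendom}, takes the bianalytic $b=h_0+\z h_1$ with $b=P$ on $\Gamma$ from Theorem \ref{thm: dirichletbianalyticwellposed}, sets $w=e^{T_D[A]}b$, and deduces uniqueness from that of $b$. The paper applies Theorem \ref{thm: dirichletbianalyticwellposed} on $D$ without comment, so your caution that uniqueness on $\mathbb{R}^2\setminus\Gamma$ need not transfer to $D$ addresses a point the paper leaves implicit; your Schwarz-function fallback is sound for the ellipse and parabola, where a focus lies in $D$, but for the region between the two branches of a hyperbola no branch point lies in $\overline{D}$, and uniqueness there comes instead from the two branches of $\Gamma$ giving $h_0+S_1h_1=0=h_0+S_2h_1$ in $D$ for the two distinct determinations $S_1,S_2$ of the Schwarz function.
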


    \begin{proof}
        By Theorem \ref{thm: hoivrepgendom}, every solution of 
        \[
            \left( \frac{\p}{\p\z} - A \right)^2 w = 0
        \]
        has the form 
        \[
            w(z) = e^{T_D[A]}(h_0(z) + \z h_1(z)),
        \]
        where $h_0,h_1 \in Hol(D)$. By Theorem \ref{thm: dirichletbianalyticwellposed}, there exist unique $h_0, h_1$ such that 
        \[
            b(z) := h_0(z) + \z h_1(z)
        \]
        is the unique solution of the bianalytic Dirichlet problem
        \begin{equation}\label{bianalyticintermediaryDirichlet}
            \begin{cases}
                \frac{\p^2 b}{\p\z^2}= 0, & \text{ in } D,\\
                b = P, & \text{ on } \Gamma.
            \end{cases}
        \end{equation}
        Using this $h_0,h_1$, 
        \[
            w(z) = e^{T_D[A](z)}(h_0(z) + \z h_1(z))
        \]
        solves 
        \[
            \begin{cases}
                \left( \frac{\p}{\p\z} - A \right)^2 w = 0, & \text{ in } D,\\
                w = e^{T_D[A]}P, & \text{ on } \Gamma.
            \end{cases}
        \]
        Uniqueness follows from the uniqueness of the solution of \eqref{bianalyticintermediaryDirichlet}. 
        
    \end{proof}

    \begin{comm}\label{remark: hoivdbvp}
        The argument used in the proof of Theorem \ref{thm: mainthm} directly extends to the Dirichlet problem for HOIV equations of order greater than two if a higher-order extension of Theorem \ref{thm: dirichletbianalyticwellposed} is available.
    \end{comm}

    \begin{comm}\label{remark: complexremark}
        In \cite{Dirichletbianalytic}, an algorithmic method for finding the explicit solution $b$ of \eqref{bianalyticintermediaryDirichlet} in the proof above is given for certain conics $Q$ and polynomials $P$. Since solutions to \eqref{eq: complexbitsadzedirichlet} all have the form $w = e^{T_D[A]} b$, then these techniques can be applied to this problem to find explicit solutions when $A$ is such that $T_D[A]$ is explicitly computable.
    \end{comm}

            \subsubsection{Higher-Order}

            While we emphasize Dirichlet problems involving second-order differential equations, as this is the natural generalization of the classical case \eqref{eq: classicaldirichlet}, many of the known results extend to their associated higher-order equations by similar arguments. We recall some higher-order extensions of the previously considered results in the literature. The next two propositions extend Proposition \ref{prop: dirichletbianalyticcircle} and Proposition \ref{lem: generalbitsadzelindep}, respectively, to arbitrary order.

            \begin{prop}[Lemma 3.1 \cite{Karaca20}]\label{prop: higherordercomplexdirichlet}
                 Let $n$ be a positive integer. The Dirichlet problem 
                \[
                    \begin{cases}
                    \frac{\p^n w}{\p\z^n} = 0,  & \text{ in } \disk,\\
                    w = 0, & \text{ on } \p \disk,
                \end{cases}
                \]
                has infinitely many linearly independent solutions.
            \end{prop}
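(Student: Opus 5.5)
The plan is to exhibit an explicit infinite family of solutions built from the factor $1 - z\z$, which vanishes on $\p\disk$, in the same way as the remark after Proposition \ref{prop: dirichletbianalyticcircle}. For $n = 1$ the problem concerns holomorphic functions vanishing on the circle, and only $w \equiv 0$ solves it. So I read the statement for $n \ge 2$, which is the case where the polyanalytic structure allows nontrivial solutions; I will note this restriction explicitly.

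For $n \ge 2$ I would take
\[
    w_k(z) := (1 - z\z)\, z^k, \qquad k = 0,1,2,\dots
\]
Each $w_k = z^k - \z z^{k+1}$ has the form $\sum_{j=0}^{n-1} \z^j h_j$ with $h_0 = z^k$, $h_1 = -z^{k+1}$ and $h_j = 0$ for $j \ge 2$. Hence $\frac{\p^2 w_k}{\p\z^2} = 0$, and therefore $\frac{\p^n w_k}{\p\z^n} = 0$ for every $n \ge 2$. On $\p\disk$ we have $z\z = 1$, so $w_k = 0$ there. Each $w_k$ is a polynomial, so it is smooth up to the boundary.

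For linear independence, suppose a finite combination $\sum_k c_k (1 - z\z) z^k$ vanishes identically on $\disk$. Dividing by $1 - |z|^2$, which is nonzero in the open disk, gives $\sum_k c_k z^k \equiv 0$. A polynomial that vanishes on an open set has all coefficients zero, so every $c_k = 0$.

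If one also wants solutions that genuinely use the higher orders, a richer family is $(1 - z\z)^m z^k$ with $1 \le m \le n-1$. Each of these is a polynomial in $\z$ of degree $m \le n-1$ with holomorphic coefficients, and each vanishes on the circle. This is not needed, since the family for $m = 1$ already suffices.

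The only real obstacle is the degenerate case $n = 1$, where the claim as literally stated is false by the maximum principle. I would handle this by restricting the statement to $n \ge 2$. The remaining steps are routine verification.
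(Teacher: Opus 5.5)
Your argument is correct. The paper does not prove this proposition itself; it only cites Karaca. The closest thing it offers is the $n=2$ family $(1-z\z)z^k$ in the remark after Proposition~\ref{prop: dirichletbianalyticcircle}. You use exactly that family and reach arbitrary order through the observation that $\frac{\p^2 w}{\p\z^2}=0$ implies $\frac{\p^n w}{\p\z^n}=0$ for $n\ge 2$, which is all that is needed.

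Your caveat about $n=1$ is also right, and it exposes a real defect in the statement as written. A holomorphic function on $\disk$ that is continuous up to $\p\disk$ and vanishes there is identically zero by the maximum principle, so the proposition must be restricted to $n\ge 2$. The same restriction is needed elsewhere in the paper, wherever a result is stated for every positive integer $n$:
Proposition~\ref{prop: generalbitsadzelindepextension}, where $w=e^{T_\disk[A]}h$ with $h$ holomorphic and zero on $\p\disk$; Corollary~\ref{cor: bicomplexPolyDirichletNotWellPosed}; and the $\bc$-Vekua analogue of that corollary.

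Your optional family $(1-z\z)^m z^k$, $1\le m\le n-1$, is also a better model for genuinely higher-order solutions than the Karaca-style example given later in the paper. The complex analogue ($p^-$ component) of that example is $\sum_{k=1}^{n-1}\z^{\,n-1}z^j\bigl(1-(z\z)^{n-k}\bigr)$. It contains the monomials $z^{j+n-k}\z^{\,2n-1-k}$, whose $\z$-degree is at least $n$, so it is not annihilated by $\frac{\p^n}{\p\z^n}$ when $n\ge 2$.
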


        \begin{prop}[Lemma 4.4 \cite{WBD2}]\label{prop: generalbitsadzelindepextension}
                Let $n$ be a positive integer. For $A \in W^{n-1,q}(\disk)$, $q>2$, the Dirichlet problem for the higher-order iterated Vekua equation
                \begin{equation}
                    \begin{cases}
                    \left( \frac{\p}{\p\z} - A\right)^n w = 0, & \text{ in } \disk\\
                    w = 0, & \text{ on } \p \disk,
                    \end{cases}
                \end{equation}
                has infinitely many linearly independent solutions.
        \end{prop}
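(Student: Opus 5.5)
The plan is to reduce the problem to the polyanalytic case, Proposition \ref{prop: higherordercomplexdirichlet}, using the representation of Corollary \ref{corr: hoivrepbzero}, in the same way the proof of Theorem \ref{thm: mainthm} reduced to Theorem \ref{thm: dirichletbianalyticwellposed}. Concretely, I would take an infinite family of linearly independent solutions $\{p_j\}_{j=0}^\infty$ of the polyanalytic Dirichlet problem
\[
    \frac{\p^n p}{\p\z^n} = 0 \text{ in } \disk, \qquad p = 0 \text{ on } \p\disk,
\]
and set $w_j := e^{T_\disk[A]} p_j$. For $n\ge 2$ one explicit choice is $p_j(z) = (1-z\z)z^j$, since $1-z\z$ is linear in $\z$. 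For $n=1$ the statement as written would fail, because a holomorphic function vanishing on $\p\disk$ is zero, so I would treat $n\geq 2$ as the intended range, or simply inherit whatever range Proposition \ref{prop: higherordercomplexdirichlet} holds for.

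\textbf{Step 1: each $w_j$ solves the equation.} Writing $p_j = \sum_{k=0}^{n-1}\z^k h_k$ with $h_k$ holomorphic, we get $w_j = \sum_k \z^k \big(e^{T_\disk[A]}h_k\big)$. Each factor $e^{T_\disk[A]}h_k$ solves $\frac{\p\varphi}{\p\z} = A\varphi$, because $\frac{\p}{\p\z}T_\disk[A]=A$. It then suffices to check the converse direction of Corollary \ref{cor: altbzerohoivrep}, namely that every such sum solves $(\frac{\p}{\p\z}-A)^n w=0$. This follows directly from the conjugation identity
\[
    \Big(\frac{\p}{\p\z}-A\Big)\big(e^{T_\disk[A]}g\big)=e^{T_\disk[A]}\frac{\p g}{\p\z}.
\]
Iterating it gives $(\frac{\p}{\p\z}-A)^n w_j = e^{T_\disk[A]}\frac{\p^n p_j}{\p\z^n}=0$. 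The iteration needs $T_\disk[A]$ to have enough derivatives, and this is guaranteed by $A\in W^{n-1,q}(\disk)$, since $T_\disk$ gains one derivative.

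\textbf{Step 2: boundary values and independence.} Theorem \ref{simprin} gives $e^{T_\disk[A]}\in C^{0,\alpha}(\overline{\disk})$, so $w_j$ extends continuously to $\overline\disk$. On $\p\disk$ it equals $e^{T_\disk[A]}\cdot 0=0$. For linear independence, suppose a finite combination satisfies $\sum c_j w_j = 0$. Then $e^{T_\disk[A]}\sum c_j p_j=0$, and since the exponential never vanishes, $\sum c_jp_j=0$. Hence all $c_j=0$ by the independence of the $p_j$.

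The main obstacle is the regularity bookkeeping in Step 1. One must justify applying the operator $n$ times to $e^{T_\disk[A]}$, which requires $T_\disk[A]\in W^{n,q}_{loc}$. One must also make sense of the boundary values of $w_j$. Both points follow from the standard mapping properties of $T_\disk$ and from Theorem \ref{simprin}. Everything else is formal.
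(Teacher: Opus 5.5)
Your proposal is correct and follows essentially the same route as the source of this result. The paper only cites Lemma 4.4 of \cite{WBD2}, but its proof of Theorem \ref{thm: mainthm} uses the same mechanism: write solutions as $e^{T_\disk[A]}$ times a polyanalytic function and pass boundary data through the nonvanishing continuous factor; your family $w_j=e^{T_\disk[A]}(1-z\z)z^j$ is a valid explicit choice for every $n\ge 2$. You are also right that the statement as quoted fails for $n=1$: there $w=e^{T_\disk[A]}h$ with $h$ holomorphic, continuous up to $\p\disk$ and vanishing on it, so $h\equiv 0$ by the maximum principle.
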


            The next two theorems provide necessary and sufficient conditions so that the associated Dirichlet problems on the disk are uniquely solvable. These are used in Section \ref{section: BicomplexExtension} and are included here for convenient reference. 

                \begin{theorem}[Theorem 15 \cite{BegBVPCA2}, Theorem 3.2 \cite{Karaca20}]\label{thm: complexhigherorderdirichletdisk}
                    Let $n$ be a positive integer, $f \in L^1(\disk)$, and $\gamma_k \in C(\p\disk)$. The Dirichlet problem
                    \[
                        \begin{cases}
                            \frac{\p^n w}{\p \z^n} = f, &\text{ in } \disk,\\
                            \frac{\p^k w}{\p \z^k} = \gamma_k, & \text{ on } \p\disk,
                        \end{cases}
                    \]
                    for $0 \leq k \leq n-1$, is uniquely solvable if and only if, for $z \in \disk$ and $0 \leq k \leq n-1$, 
                    \[
                        \sum_{\lambda = k}^{n-1} \frac{\z}{2\pi i} \int_{|\zeta|=1} (-1)^{\lambda - k}\frac{\gamma_{\lambda}(\zeta)}{1-\z\zeta} \frac{\overline{(\zeta - z)}^{\lambda-k}}{(\lambda -k)!}\,d\zeta + \frac{(-1)^{n-\lambda}\z}{\pi} \iint_{|\zeta|<1}\frac{f(\zeta)}{1-\z\zeta} \frac{\overline{(\zeta-z)}^{\,n-1-\lambda}}{(n-1-\lambda)!}\,d\xi\,d\eta = 0,
                    \]
                    where $\zeta = \xi + i\eta$.
                    The solution is 
                    \begin{align*}
                        w(z) &= \sum_{\lambda = 0}^{n-1} \frac{(-1)^\lambda}{2\pi i} \int_{|\zeta|=1} \frac{\gamma_\lambda(\zeta)}{\lambda!} \frac{\overline{(\zeta - z)}^\lambda}{\zeta - z}\,d\zeta + \frac{(-1)^n}{\pi} \iint_{|\zeta|<1} \frac{f(\zeta)}{(n-1)!} \frac{\overline{(\zeta -z)}^{\,n-1}}{\zeta -z}\,d\xi\,d\eta.
                    \end{align*}
                \end{theorem}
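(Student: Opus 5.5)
The plan is to prove the statement by induction on $n$, reducing everything to the first-order Dirichlet problem for the inhomogeneous Cauchy--Riemann equation on $\disk$. That problem reads
\[
\frac{\p w}{\p\z}=f \text{ in } \disk,\qquad w=\gamma \text{ on } \p\disk .
\]
The base case $n=1$ is the classical statement that this problem is solvable if and only if
\[
\frac{\z}{2\pi i}\int_{|\zeta|=1}\frac{\gamma(\zeta)}{1-\z\zeta}\,d\zeta-\frac{\z}{\pi}\iint_{|\zeta|<1}\frac{f(\zeta)}{1-\z\zeta}\,d\xi\,d\eta=0, \qquad z\in\disk,
\]
and that the solution is then the Cauchy--Pompeiu representation
\[
w(z)=\frac{1}{2\pi i}\int_{|\zeta|=1}\frac{\gamma(\zeta)}{\zeta-z}\,d\zeta-\frac1\pi\iint_{|\zeta|<1}\frac{f(\zeta)}{\zeta-z}\,d\xi\,d\eta .
\]
I would prove it as follows. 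Necessity: apply the Cauchy--Pompeiu formula to $w$ at $z$. Also apply it at the reflected point $1/\z$, which lies outside the disk; there the left-hand side vanishes. Rewriting $\frac{1}{\zeta-1/\z}=-\frac{\z}{1-\z\zeta}$ gives the displayed condition. Sufficiency: define $w$ by the formula. Then $\p_{\z}w=f$ because $\p_{\z}T_\disk=I$. The condition guarantees that the boundary values equal $\gamma$, since the Cauchy integral minus its reflected counterpart reproduces $\gamma$ on $|z|=1$ (Plemelj--Sokhotski). Uniqueness holds because the difference of two solutions is holomorphic with zero boundary values.

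For the inductive step, set $w_k:=\frac{\p^k w}{\p\z^k}$, so the system splits into a chain of first-order problems:
\begin{align*}
\p_{\z}w_{n-1}&=f \text{ in } \disk, & w_{n-1}&=\gamma_{n-1} \text{ on } \p\disk,\\
\p_{\z}w_{k}&=w_{k+1} \text{ in } \disk, & w_k&=\gamma_k \text{ on } \p\disk, \qquad 0\le k\le n-2 .
\end{align*}
Solving from the top down, each step is uniquely solvable if and only if its own first-order condition holds. Feeding these solutions back gives the $n$ conditions indexed by $k$ and, for $k=0$, the representation of $w$. Uniqueness follows at each level from the base case.

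The real work, and the step I expect to be the main obstacle, is making the iterated kernels explicit. I must show by induction that substituting the Cauchy--Pompeiu formula for $w_{k+1}$ into the area integral for $w_k$ yields kernels $\frac{(-1)^\lambda}{\lambda!}\frac{\overline{(\zeta-z)}^{\lambda}}{\zeta-z}$ in the representation. In the solvability conditions, the corresponding kernels are $\frac{\z}{1-\z\zeta}\frac{\overline{(\zeta-z)}^{\lambda-k}}{(\lambda-k)!}$. To do this I would interchange the order of integration by Fubini, justified since $f\in L^1$ and the kernels are weakly singular. I would then evaluate the inner area integrals, which are of the form
\[
\frac1\pi\iint_{|\tilde\zeta|<1}\frac{\overline{(\zeta-\tilde\zeta)}^{\,m}}{(\zeta-\tilde\zeta)(\tilde\zeta-z)}\,d\tilde\xi\,d\tilde\eta .
\]
They can be computed by applying Cauchy--Pompeiu to $\overline{(\zeta-\tilde\zeta)}^{\,m+1}/(m+1)$ as a function of $\tilde\zeta$. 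This produces a boundary term plus the kernel of the next order, which is the source of the factorials and alternating signs.

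For the conditions, I would use that the problem for $w_k$ is solvable precisely when the reflected Cauchy--Pompeiu expression vanishes. I would substitute the already-obtained representation of $w_{k+1}$ and perform the same kernel computation with $z$ replaced by $1/\z$. This should turn the factor $\overline{(\zeta-z)}$ appearing in the inner integrals into the stated combination with $\frac{\z}{1-\z\zeta}$. Careful bookkeeping of the signs $(-1)^{\lambda-k}$ and of the continuity of the boundary data is where I expect errors to creep in.
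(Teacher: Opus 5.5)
The paper does not prove this statement itself; it quotes it from Begehr and Karaca. Your plan is essentially the standard inductive argument of those references, and it is correct: set up the chain $w_k=\p^k w/\p\z^k$, solve it top-down through the first-order Dirichlet problem (whose condition comes from Cauchy--Pompeiu at the reflected point $1/\z$), and compute the iterated kernels. For the step you flag as the obstacle, Cauchy--Pompeiu gives, for $|z|<1$ and $|\zeta|\le 1$,
\[
\frac{1}{\pi}\iint_{|\tilde\zeta|<1}\frac{\overline{(\zeta-\tilde\zeta)}^{\,m}}{(\zeta-\tilde\zeta)(\tilde\zeta-z)}\,d\tilde\xi\,d\tilde\eta=\frac{\overline{(\zeta-z)}^{\,m+1}}{(m+1)(\zeta-z)},
\]
where the boundary term vanishes by a residue at infinity. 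At $1/\z$ the boundary term does not vanish and equals $-\frac{\z}{1-\z\zeta}\frac{\overline{(\zeta-z)}^{\,m+1}}{m+1}$, so your bookkeeping gives the area term with $(-1)^{n-k}$ and $\overline{(\zeta-z)}^{\,n-1-k}/(n-1-k)!$ outside the $\lambda$-sum; the $\lambda$ printed there in the statement is a typo. Two small caveats: boundary attainment in the base case comes from the Poisson kernel (Cauchy integral minus its reflection), not from Plemelj--Sokhotski, which would need H\"older data; and continuity of the area term up to $\p\disk$ needs more than $f\in L^1$, a looseness inherited from the cited statement.
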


        \begin{theorem}[Theorem 4.3, Theorem 4.5 \cite{WBD2}]\label{thm: thmfourpointfivewbd2}
            Let $n$ be a positive integer, $A \in W^{n-1,q}(\disk)$, $q>2$, and $\gamma_k \in C(\p\disk)$, $0 \leq k \leq n-1$. The Dirichlet problem
            \[
                \begin{cases}
                    \left( \frac{\p}{\p\z}-A \right)^n w = 0, & \text{ in } \disk,\\
                    \left( \frac{\p}{\p\z}-A \right)^k w = \gamma_k, & \text{ on } \p\disk,\\
                \end{cases}
            \]
            for $0 \leq k \leq n-1$, is uniquely solvable if and only if, for $|z| < 1$  and $0 \leq k \leq n-1$,
            \[
                \sum_{\lambda = k}^{n-1}\frac{\z}{2\pi i} \int_{|\zeta| = 1} (-1)^{\lambda - k} \frac{\gamma_\lambda(\zeta)e^{-T_\disk[A](\zeta)}}{1- \z\zeta} \frac{(\overline{\zeta - z})^{\lambda- k}}{(\lambda - k)!}\,d\zeta   = 0.
            \]

            The solution is 
            \[
                w = e^{T_\disk[A]}  \varphi, 
            \]
            where
            \[
                \varphi(z) =  \sum_{k=0}^{n-1} \frac{(-1)^k}{2\pi i} \int_{|\zeta| = 1} \frac{\gamma_k(\zeta)e^{-T_\disk[A](\zeta)}}{k!} \frac{(\overline{\zeta - z})^k}{\zeta - z}\,d\zeta .
            \]
            
         \end{theorem}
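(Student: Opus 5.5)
The plan is to reduce the problem to the polyanalytic Dirichlet problem on the disk, Theorem \ref{thm: complexhigherorderdirichletdisk} with $f\equiv 0$, by conjugating with the exponential factor $e^{T_\disk[A]}$. This is the same device used in the proof of Theorem \ref{thm: mainthm}, but now applied at the level of all the intermediate operators $\left(\frac{\p}{\p\z}-A\right)^k$.

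\textbf{Step 1: the intertwining identity.} Since $\frac{\p}{\p\z}T_\disk[A]=A$ in $\disk$, the product rule gives, for any sufficiently regular $\varphi$,
\[
\left(\frac{\p}{\p\z}-A\right)\left(e^{T_\disk[A]}\varphi\right)=e^{T_\disk[A]}\frac{\p\varphi}{\p\z}.
\]
Iterating yields, for $0\le k\le n$,
\[
\left(\frac{\p}{\p\z}-A\right)^k\left(e^{T_\disk[A]}\varphi\right)=e^{T_\disk[A]}\frac{\p^k\varphi}{\p\z^k}.
\]
The hypothesis $A\in W^{n-1,q}(\disk)$ with $q>2$ is what makes this legitimate. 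The mapping properties of $T_\disk$ give $T_\disk[A]\in W^{n,q}(\disk)$, so all derivatives that appear exist in the Sobolev sense. By Theorem \ref{simprin}, $e^{\pm T_\disk[A]}$ is H\"older continuous on $\overline\disk$.

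\textbf{Step 2: the correspondence of solutions.} By Corollary \ref{corr: hoivrepbzero}, every solution $w$ of the homogeneous equation has the form $w=e^{T_\disk[A]}\varphi$ with $\varphi=\sum_{k=0}^{n-1}\z^k h_k$ polyanalytic. Conversely, Step 1 shows that every such product is a solution. Because $e^{T_\disk[A]}$ never vanishes, the map $\varphi\mapsto e^{T_\disk[A]}\varphi$ is a bijection between solutions of $\frac{\p^n\varphi}{\p\z^n}=0$ and solutions of $\left(\frac{\p}{\p\z}-A\right)^n w=0$.

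\textbf{Step 3: transfer of the boundary data.} By Step 1, the boundary conditions $\left(\frac{\p}{\p\z}-A\right)^k w=\gamma_k$ on $\p\disk$ are equivalent to
\[
\frac{\p^k\varphi}{\p\z^k}=\gamma_k e^{-T_\disk[A]}\quad\text{on }\p\disk,\qquad 0\le k\le n-1.
\]
The new data $\tilde\gamma_k:=\gamma_k e^{-T_\disk[A]}$ lie in $C(\p\disk)$ because $e^{-T_\disk[A]}$ is continuous up to the boundary.

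\textbf{Step 4: apply the polyanalytic result.} Apply Theorem \ref{thm: complexhigherorderdirichletdisk} with $f\equiv 0$ and data $\tilde\gamma_k$. The area integrals drop out, and its solvability condition becomes exactly the stated condition with $\gamma_\lambda$ replaced by $\gamma_\lambda e^{-T_\disk[A]}$. Its solution formula gives the stated $\varphi$. By the bijection of Step 2:
\begin{itemize}
\item unique solvability of the $\varphi$-problem is equivalent to unique solvability of the $w$-problem;
\item the solution is $w=e^{T_\disk[A]}\varphi$.
\end{itemize}

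\textbf{Main obstacle.} The step needing the most care is Step 1 together with the boundary behaviour. One must check that the iterated identity holds in the weak sense under only $A\in W^{n-1,q}$. One must also check that the boundary values of $\left(\frac{\p}{\p\z}-A\right)^k w$ and of $e^{T_\disk[A]}\frac{\p^k\varphi}{\p\z^k}$ agree, which is where the continuity of $e^{T_\disk[A]}$ on $\overline\disk$ is used. I would verify the identity first for smooth $A$ and then pass to the limit using the continuity of $T_\disk$ from $W^{n-1,q}$ into $W^{n,q}$.
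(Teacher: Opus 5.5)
Your argument is correct. The paper does not prove this result itself; it only cites it from \cite{WBD2}. Your route is nonetheless the natural one, and it is the one reflected in the quoted formulas. Conjugating by the nonvanishing factor $e^{T_\disk[A]}$, which is continuous on $\overline{\disk}$, and using Corollary \ref{corr: hoivrepbzero} together with $\left(\frac{\p}{\p\z}-A\right)^k(e^{T_\disk[A]}\varphi)=e^{T_\disk[A]}\frac{\p^k\varphi}{\p\z^k}$ reduces the problem to Theorem \ref{thm: complexhigherorderdirichletdisk} with $f\equiv 0$ and data $\gamma_k e^{-T_\disk[A]}$. This is the same factorization device the paper uses to prove Theorem \ref{thm: mainthm}.

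Your ``main obstacle'' is milder than you expect. Since $\varphi$ is polyanalytic and hence smooth, the iterated identity follows by induction using only $\frac{\p}{\p\z}T_\disk[A]=A$. No approximation argument and no $W^{n,q}$ mapping property of $T_\disk$ are needed for it; the hypothesis $A\in W^{n-1,q}(\disk)$ enters only through the representation theorem.
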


    \subsection{Bicomplex Numbers}

        In Section \ref{section: BicomplexExtension}, we prove generalizations of Proposition \ref{prop: dirichletbianalyticcircle}, Theorem \ref{thm: dirichletbianalyticwellposed}, Proposition \ref{lem: generalbitsadzelindep}, Proposition \ref{prop: higherordercomplexdirichlet}, Proposition \ref{prop: generalbitsadzelindepextension}, Theorem \ref{thm: complexhigherorderdirichletdisk}, and Theorem \ref{thm: thmfourpointfivewbd2} in the setting of bicomplex-valued functions of a single complex variable. In the remainder of this section, we provide necessary background information on the bicomplex numbers and associated results that are used in Section \ref{section: BicomplexExtension}. For the reader that is only interested in content concerning complex-valued functions, the remainder of this section and Section \ref{section: BicomplexExtension} may be ignored.

        \subsubsection{Basics}

             The bicomplex numbers, denoted by $\bc$, are the set $\mathbb{C}^2$ with the operations of component-wise addition and multiplication defined by 
            \[  
                (z_1, z_2) \cdot (w_1,w_2) = (z_1w_1-z_2w_2, z_1w_2 + z_2w_1),
            \]
            for all $(z_1, z_2), (w_1, w_2) \in \bc$. It is convenient to realize $(z_1, z_2) \in \bc$ as $z_1 + jz_2$, where $j^2 = -1$. With this representation, the usual algebraic operations of the complex numbers agree with the operations of the bicomplex numbers as defined above. See, for example, \cite{PriceMulti, BCHolo, ComplexSchr, CastaKrav, BCTransmutation, BCBergman, FundBicomplex} for extensive background on the bicomplex numbers and their role in analysis.
            Next, let us recall the basic terminology of the bicomplex numbers. 

            \begin{deff}
                For $z = z_1 + jz_2 \in \bc$, $z_1$ is the scalar part of $z$, denoted by $\sca z$, and $z_2$ is the vector part of $z$, denoted by $\vect z$. The bicomplex conjugate of $z\in \bc$ is 
                \[
                    \overline{z} = \sca z - j \vect z.
                \]
            \end{deff}

            \begin{comm}
                To differentiate between the bicomplex conjugate of $w \in \bc$ and the complex conjugate of $z \in \mathbb{C}$, throughout the remainder of this section and in Section \ref{section: BicomplexExtension}, we adopt the convention that the complex conjugate of $z = x+ iy\in \mathbb{C}$ is denoted by $z^* = x-iy$. 
            \end{comm}

            \begin{deff}
                For $z = x + i y \in \C$, i.e., $x, y \in \R$, the bicomplexification of $z$ is 
                \[
                    \widehat{z} = x + jy.
                \]  
            \end{deff}

            The bicomplex numbers have two idempotent elements
            \[
            p^+ := \frac{1}{2}(1 + ji) \quad \text{ and } \quad  p^- := \frac{1}{2}(1 - ji).
            \]
            Observe that 
            \[
                (p^\pm)^2 = p^\pm, \quad p^+ + p^- = 1, \text{ and } \quad p^+p^- = 0.
            \]
            The next proposition describes that every bicomplex number can be written as a linear combination of these two numbers. 

            \begin{prop}[Proposition 1 \cite{BCTransmutation}]\label{everybchasplusandminus}
                Let $w \in \bc$. There exist unique $w^{\pm} \in \C$ such that 
                \[
                    w = p^+ w^+ + p^- w^-.
                \]
                Furthermore, 
                \[
                    w^{\pm} = \sca w \mp i \vect w.
                \]
                \end{prop}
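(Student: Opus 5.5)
We prove Proposition \ref{everybchasplusandminus} by solving a linear system over $\C$ for the coefficients, which handles existence and uniqueness together.

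\emph{Setting up the system.} Write $w = z_1 + jz_2$ with $z_1 = \sca w$ and $z_2 = \vect w$ in $\C$. For candidates $w^\pm \in \C$, expand using the definitions of $p^\pm$:
\[
p^+w^+ + p^-w^- = \tfrac12(w^+ + w^-) + j\,\tfrac{i}{2}(w^+ - w^-).
\]
Here we use that $i$ and $j$ commute and that the complex scalars $w^\pm$ commute with $j$; this holds because multiplication in $\bc$ is the commutative product on $\C^2$ given above.

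\emph{Uniqueness of the representation.} The representation $z_1 + jz_2$ with $z_1, z_2 \in \C$ is unique, since $\bc = \C^2$ as a set. So the equation $w = p^+w^+ + p^-w^-$ holds if and only if
\[
w^+ + w^- = 2z_1 \quad\text{and}\quad i(w^+ - w^-) = 2z_2,
\]
that is, $w^+ - w^- = -2iz_2$.

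\emph{Solving.} This $2\times 2$ system over $\C$ has nonzero determinant. Its unique solution is
\[
w^+ = z_1 - iz_2, \qquad w^- = z_1 + iz_2.
\]
These are exactly $w^\pm = \sca w \mp i\,\vect w$. Unique solvability of the system gives both existence and uniqueness of $w^\pm$.

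The argument has no real obstacle. The only point needing care is justifying the commutativity of $i$ with $j$ inside $\bc$, which follows directly from the defined product.
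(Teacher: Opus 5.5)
Your argument is correct. Expanding $p^+w^+ + p^-w^-$ into scalar and vector parts and solving the resulting invertible $2\times 2$ system over $\C$ gives existence, uniqueness, and the formula $w^\pm = \sca w \mp i\vect w$ all at once. The paper does not prove this result itself; it imports it from Proposition 1 of \cite{BCTransmutation}, and your component-comparison computation is the standard direct verification behind it.
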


            Since we use a bicomplex-valued exponential function in Theorem \ref{thm: bcmainthm}, we explicitly describe this object here, as it is found in \cite{FundBicomplex}. See also \cite{BCHolo}.

            \begin{deff}
                For $w = p^+ w^+ + p^- w^- \in \bc$, the bicomplex exponential function $e^w$ is defined to be 
                \[
                    e^w := p^+ e^{w^+} + p^- e^{w^-}.
                \]
            \end{deff}

            Next, we define the bicomplex norm and the Lebesgue and Sobolev spaces of bicomplex-valued functions of a single complex variable. We follow this with a result that relates a function's inclusion in the bicomplex-Lebesgue space with the inclusion of the component functions in its idempotent representation in a Lebesgue space. 

            \begin{deff}
                For $w \in \bc$, the bicomplex norm of $w$, denoted by $||\cdot||_{\bc}$,  is
                \[
                    ||w||_{\bc} := \sqrt{\frac{|w^+|^2 + |w^-|^2}{2}},
                \]
                where $|w^{\pm}|$ is the complex modulus of $w^\pm$. For $S \subset \mathbb{C}$, $p$ a positive real number, and $k$ a nonnegative integer, we denote by $C(S, \bc)$ the collection of functions $w:S \to\bc$ that are continuous with respect to the metric induced by the bicomplex norm, we define the bicomplex Lebesgue space $L^p(S,\bc)$ as the collection of functions $f: \disk\to\bc$ such that
            \[
                ||f||_{L^p(S,\bc)} := \left( \iint_{\disk}||f(z)||^p_\bc\,dx\,dy\right)^{1/p} < \infty,
            \]  
            and we define the bicomplex Sobolev space $W^{k,p}(S,\bc)$ as the collection of functions $f: S\to\bc$ such that $f$ and its derivatives up to order $k$ are in $L^p(S,\bc)$. 
            \end{deff}

            \begin{prop}[Section 2.2 \cite{BCBergman}]\label{cor: corollaryof2.24BCAtomic}\label{prop: bcctsiffidemc}
                Let $D \subset \mathbb{C}$ be a bounded domain. For $p$ a positive real number, a function $f = p^+ f^+ + p^- f^- \in L^p(D, \bc)$ if and only if $f^\pm \in L^p(D)$, and a function $f = p^+ f^+ + p^- f^- \in C(D, \bc)$ if and only if $f^\pm \in C(D)$.
            \end{prop}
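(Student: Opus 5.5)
The statement is Proposition \ref{prop: bcctsiffidemc}. The plan is to reduce everything to the pointwise relation between the bicomplex norm and the moduli of the idempotent components. By Proposition \ref{everybchasplusandminus}, the components $f^\pm(z) = \sca f(z) \mp i \vect f(z)$ are uniquely determined at each point. By definition, $\|f(z)\|_\bc^2 = \tfrac12(|f^+(z)|^2 + |f^-(z)|^2)$. The first step is the elementary two-sided pointwise estimate
\[
\tfrac{1}{\sqrt2}\max\{|f^+(z)|,|f^-(z)|\} \le \|f(z)\|_\bc \le \max\{|f^+(z)|,|f^-(z)|\} \le |f^+(z)| + |f^-(z)|.
\]

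For the Lebesgue statement, raise this estimate to the $p$-th power. This gives $2^{-p/2}|f^\pm|^p \le \|f\|_\bc^p$, and, using $(a+b)^p \le 2^{\max\{p-1,0\}}(a^p+b^p)$ for $p>0$, also $\|f\|_\bc^p \le C_p(|f^+|^p+|f^-|^p)$. Integrating over $D$ shows that $\|f\|_{L^p(D,\bc)}<\infty$ if and only if both $\|f^\pm\|_{L^p(D)}<\infty$. Measurability should be checked as well. Since $f^\pm$ are fixed complex-linear combinations of $\sca f$ and $\vect f$, and conversely $\sca f = \tfrac12(f^++f^-)$ and $\vect f = \tfrac{i}{2}(f^+-f^-)$, $f$ is measurable exactly when $f^\pm$ are. (The paper writes the integral over $\disk$ in the definition; I would read it as an integral over $S=D$.)

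For the continuity statement, apply the same estimate to differences: $\|f(z)-f(z_0)\|_\bc$ is computed from $f^\pm(z)-f^\pm(z_0)$, because the idempotent decomposition is linear. Hence $\|f(z)-f(z_0)\|_\bc\to 0$ if and only if $|f^+(z)-f^+(z_0)|\to0$ and $|f^-(z)-f^-(z_0)|\to0$. So continuity in the metric induced by $\|\cdot\|_\bc$ is equivalent to continuity of both components.

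There is no genuine obstacle here. The only points needing care are the linearity and uniqueness of the map $w\mapsto(w^+,w^-)$, which Proposition \ref{everybchasplusandminus} supplies, and the constant in the $p$-th power inequality when $0<p<1$. In that range one uses $(a+b)^p\le a^p+b^p$ instead of convexity.
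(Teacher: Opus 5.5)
Your proof is correct. The pointwise comparison $\tfrac{1}{\sqrt2}\max\{|f^+|,|f^-|\}\le\|f\|_\bc\le\max\{|f^+|,|f^-|\}$, together with the linearity of $w\mapsto(w^+,w^-)$ from Proposition \ref{everybchasplusandminus}, settles both the $L^p$ claim (including measurability and the range $0<p<1$) and the continuity claim. The paper gives no argument of its own here, only the citation to Section 2.2 of \cite{BCBergman}, and your direct check from the definition of $\|\cdot\|_\bc$ is the elementary verification that citation stands in for.
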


         \subsubsection{Holomorphic and Generalized Analytic Functions}

            Next, we define the bicomplex differential operators and related concepts that we consider in the sequel. 

            \begin{deff}\label{bcdbardef}
                The bicomplex differential operators $\p$ and $\dbar$ are
                \[
                    \p := \frac{1}{2} \left( \frac{\p}{\p x} - j \frac{\p }{\p y}\right)
                \]
                and
                \[
                    \dbar := \frac{1}{2} \left( \frac{\p}{\p x} + j \frac{\p }{\p y}\right).
                \]
                Alternatively, the differential operators can be represented using the idempotent elements $p^\pm$ as 
                \[
                    \p = p^+ \frac{\p}{\p z^*} + p^- \frac{\p }{\p z}
                \]
                and
                \[
                    \dbar = p^+ \frac{\p }{\p z} + p^- \frac{\p}{\p z^*},
                \]
                where $\frac{\p}{\p z} := \frac{1}{2}\left(\frac{\p}{\p x} -i \frac{\p}{\p y}\right)$ and $\frac{\p}{\p z^*} := \frac{1}{2}\left(\frac{\p}{\p x} + i \frac{\p}{\p y}\right)$ are the usual first-order complex differential operators.
            \end{deff}

            \begin{comm}
                Observe that, for $z$ a complex variable and $n$ a positive integer, we have
                \begin{align*}
                    \p \widehat{z^*} &= 0 \\
                    \dbar \widehat{z} &=0 \\
                    \p \widehat{z}^{\,n} &= n \widehat{z}^{\,n-1}\\
                    \dbar \widehat{z^*}^{\,n} &= n \widehat{z^*}^{\,n-1}.
                \end{align*}
            \end{comm}

            \begin{deff}
            The $\bc$-holomorphic functions on a set $S \subset\mathbb{C}$ are the collection of functions $w: S \to\bc$ that satisfy
            \begin{equation}\label{eqn: bccreqn}
                \dbar w = 0.
            \end{equation}
            We indicate the collection of these functions by $Hol(S,\bc)$. 
            \end{deff}

            \begin{comm}
                While we only consider bicomplex-valued functions of a single complex variable in this work, bicomplex-valued functions of a bicomplex variable and holomorphicity in that context have previously been considered. See \cite{PriceMulti, BCHolo} for more information. 
            \end{comm}

                The next theorem illustrates a useful relationship between $\bc$-holomorphic functions and the complex components of their idempotent representation. This relationship informs much of the work that follows.  

            \begin{theorem}[Remark 2.3 \cite{BCAtomic} (see also Remark 4 of \cite{BCTransmutation} or \cite{BCBergman})]\label{thm: remark3BCAtomic}
                A function $w: \disk \to\bc$ with idempotent representation $w(z) = p^+ w^+(z) + p^- w^-(z)$ satisfies
                \[
                    \dbar w = 0
                \]
                if and only if 
                \[
                    \frac{\p (w^+)^*}{\p z^*} = 0 = \frac{\p w^-}{\p z^*}.
                \]
            \end{theorem}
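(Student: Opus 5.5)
The statement to prove is Theorem \ref{thm: remark3BCAtomic}. The plan is a direct computation in the idempotent basis. Writing $w = p^+ w^+ + p^- w^-$, I apply the operator $\dbar$ in its idempotent form from Definition \ref{bcdbardef}, namely
\[
\dbar = p^+ \frac{\p}{\p z} + p^- \frac{\p}{\p z^*}.
\]
The first thing to settle is how the complex unit $i$ appearing inside $\frac{\p}{\p z}$ and $\frac{\p}{\p z^*}$ interacts with $p^\pm$. Since $i$ commutes with $j$, the complex scalars commute with $p^\pm$. Because $(p^\pm)^2 = p^\pm$ and $p^+p^- = 0$, we get
\[
\dbar w = p^+ \frac{\p w^+}{\p z} + p^- \frac{\p w^-}{\p z^*}.
\]
I would double-check this against the definition $\dbar = \tfrac12(\p_x + j\p_y)$ by using $j p^\pm = \mp i\, p^\pm$. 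This identity follows from $jp^+ = \tfrac12(j - i) = -i\,p^+$, and similarly for $p^-$. It is consistent with Proposition \ref{everybchasplusandminus}, since $w^\pm = \sca w \mp i \vect w$.

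Next I use the uniqueness part of Proposition \ref{everybchasplusandminus}: a bicomplex number vanishes if and only if both of its idempotent components vanish. This gives that $\dbar w = 0$ if and only if
\[
\frac{\p w^+}{\p z} = 0 \quad \text{and} \quad \frac{\p w^-}{\p z^*} = 0.
\]
Finally, I rewrite the first condition by complex conjugation, using $\left(\frac{\p f}{\p z}\right)^* = \frac{\p f^*}{\p z^*}$. Hence $\frac{\p w^+}{\p z} = 0$ is equivalent to $\frac{\p (w^+)^*}{\p z^*} = 0$, which is the stated form.

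The main obstacle is bookkeeping rather than analysis. The delicate points are the sign conventions, that is, which idempotent pairs with $\p_z$ and which with $\p_{z^*}$, and the fact that $i$ acts on the components as an ordinary complex scalar. I would verify this once carefully by expanding $\tfrac12(\p_x + j\p_y)(p^+w^+ + p^-w^-)$ directly in terms of the real derivatives of $w^\pm$. The regularity needed, namely that $w$ is differentiable (or the derivatives are taken in the weak sense), is inherited componentwise. Here Proposition \ref{prop: bcctsiffidemc} shows that the function spaces decompose along $p^\pm$.
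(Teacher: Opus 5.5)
Your proof is correct, and it follows the same route the paper uses elsewhere. The paper gives no proof of this statement and only cites it from the literature. Your computation (applying $\dbar = p^+\frac{\p}{\p z} + p^-\frac{\p}{\p z^*}$ from Definition~\ref{bcdbardef}, separating components by uniqueness of the idempotent representation, then conjugating the $p^+$ component) is exactly the argument in the second half of the proof of Lemma~\ref{lem: bicomplexpolyanalyticidempotentrep} and in Proposition~\ref{prop: bcdirichletiffcomplexdirichletfornequalstwo}. Your check $jp^\pm = \mp i\,p^\pm$ correctly confirms that $p^+$ pairs with $\frac{\p}{\p z}$ and $p^-$ with $\frac{\p}{\p z^*}$.
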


            Now, we define the analogue of the Vekua equation \eqref{eq: vekintroeqn} in the bicomplex setting.

            \begin{deff}
                Let $A, B \in L^q(\disk,\bc)$, $q>2$. Equations of the form
                \begin{equation}\label{eq: bcvek}
                    \dbar w = Aw + B\overline{w}
                \end{equation}
                are called $\bc$-Vekua equations. Solutions of \eqref{eq: bcvek} are called $\bc$-generalized analytic functions. 
            \end{deff}

            \begin{comm}
                As in the $\bc$-holomorphic case, we only consider bicomplex-valued functions of a single complex variable in this work.  Bicomplex-valued generalized analytic functions of a bicomplex variable are considered in, for example, \cite{ BerglezBCPseudo, BerglezBCPseudo2}. 
            \end{comm}

            The next theorem generalizes Theorem \ref{thm: remark3BCAtomic} to the $\bc$-Vekua equation. 

            \begin{theorem}[Theorem 4.9 \cite{BCHoiv}]\label{thm: 4.9BCHoiv}
                Let $A, B \in L^q(\disk,\bc)$, $q>2$. A function $w = p^+ w^+ + p^- w^-: \disk\to\bc$ solves
                \[
                    \dbar w = Aw + B\overline{w}
                \]
                if and only if 
                \[
                    \frac{\p (w^+)^*}{\p z^*} = (A^+)^*(w^+)^* + (B^+)^* w^+
                \]
                and
                \[
                    \frac{\p w^-}{\p z^*} = A^- w^- + B^- (w^-)^*
                \]
            \end{theorem}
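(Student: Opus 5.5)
The plan is to pass to the idempotent representation and compare $p^+$ and $p^-$ parts. By Proposition \ref{everybchasplusandminus}, a bicomplex number (or bicomplex-valued function) vanishes if and only if both of its idempotent components vanish. So $\dbar w = Aw + B\overline{w}$ is equivalent to the pair of complex equations obtained by taking the $p^+$- and $p^-$-components of each side. The proof then amounts to computing these components for $\dbar w$, $Aw$ and $B\overline{w}$, and finally conjugating the $p^+$ equation.

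\emph{Left-hand side.} Write $w = p^+w^+ + p^-w^-$ and use the idempotent form $\dbar = p^+\frac{\p}{\p z} + p^-\frac{\p}{\p z^*}$ from Definition \ref{bcdbardef}. Since $p^\pm$ are constants with $(p^\pm)^2 = p^\pm$ and $p^+p^- = 0$,
\[
\dbar w = p^+\frac{\p w^+}{\p z} + p^-\frac{\p w^-}{\p z^*}.
\]
This is the same computation as in Theorem \ref{thm: remark3BCAtomic}.

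\emph{Product term.} The idempotent representation is multiplicative, because the cross terms carry $p^+p^- = 0$. Hence
\[
Aw = p^+A^+w^+ + p^-A^-w^-.
\]

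\emph{Conjugate term.} This is the step I expect to be the main obstacle. The target equations, after undoing the final conjugation, are
\[
\frac{\p w^+}{\p z} = A^+w^+ + B^+(w^+)^*, \qquad \frac{\p w^-}{\p z^*} = A^-w^- + B^-(w^-)^*.
\]
These hold exactly when
\[
(\overline{w})^{\pm} = (w^{\pm})^*,
\]
so I would first verify this identity from the formula $w^\pm = \sca w \mp i\vect w$. Checking it with the conjugation read literally as $\sca w - j\vect w$ gives $(\overline{w})^\pm = w^\mp$, not $(w^\pm)^*$, which would mix the $+$ and $-$ components. The identity $(\overline{w})^\pm = (w^\pm)^*$ holds precisely when the conjugation also conjugates the complex components, i.e.
\[
\overline{w} = (\sca w)^* - j(\vect w)^*,
\]
which I take to be the convention of \cite{BCHoiv}. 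I would state this convention explicitly and verify it by direct substitution into $w^\pm = \sca w \mp i\vect w$. With it, multiplicativity gives
\[
B\overline{w} = p^+B^+(w^+)^* + p^-B^-(w^-)^*.
\]

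\emph{Conclusion.} Equating idempotent components, the $p^-$ equation is already the stated second equation. The $p^+$ equation is
\[
\frac{\p w^+}{\p z} = A^+w^+ + B^+(w^+)^*.
\]
Taking complex conjugates, and using $\left(\frac{\p f}{\p z}\right)^* = \frac{\p f^*}{\p z^*}$, turns it into the stated first equation. Every step is an equivalence, so both directions of the ``if and only if'' follow at once. Proposition \ref{cor: corollaryof2.24BCAtomic} guarantees $A^\pm, B^\pm \in L^q(\disk)$, so the component equations are genuine complex Vekua equations in the same function-space setting.
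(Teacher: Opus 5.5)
Your argument is correct. The paper gives no proof of this statement; it imports it from \cite{BCHoiv}. Your componentwise computation is, however, exactly the mechanism the paper uses in the converse halves of Lemma \ref{lem: bicomplexpolyanalyticidempotentrep} and Lemma \ref{lem: bicomplexhoividempotentrep}: write $\dbar = p^+\frac{\p}{\p z} + p^-\frac{\p}{\p z^*}$, kill the cross terms with $p^+p^-=0$, and conjugate the $p^+$ component.

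Your diagnosis of the conjugate term is also right, and it exposes a defect in the paper rather than in your proof. With the paper's definition $\overline{w} = \sca w - j\vect w$, one has $\overline{w} = p^+w^- + p^-w^+$. The equation is then equivalent to the coupled system $\frac{\p w^+}{\p z} = A^+w^+ + B^+w^-$, $\frac{\p w^-}{\p z^*} = A^-w^- + B^-w^+$, and the statement as printed is false. For a counterexample, take $A\equiv 0$, $B\equiv 1$ and $w = p^-e^{2x}$, so $w^+=0$ and $w^-=e^{2x}$. Both displayed component equations hold, because $\frac{\p}{\p z^*}e^{2x} = e^{2x} = (e^{2x})^*$. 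Yet $\dbar w = p^-e^{2x}$, whereas the literal conjugate is $\overline{w} = p^+e^{2x}$.

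The theorem holds precisely for the conjugation $(\sca w)^* - j(\vect w)^*$, i.e. $p^+w^+ + p^-w^- \mapsto p^+(w^+)^* + p^-(w^-)^*$. That is the convention you adopt, so you were right to state it explicitly. This does not damage the rest of the paper. The theorem is only ever applied with $B\equiv 0$ (in the proof of Lemma \ref{lem: bicomplexhoividempotentrep}), where no conjugation appears.
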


            Theorem \ref{thm: remark3BCAtomic} and Theorem \ref{thm: 4.9BCHoiv} make clear that properties of complex holomorphic and generalized analytic functions that are maintained in linear combination (after one component function has been conjugated) will be realized by their bicomplex versions. See \cite{BCAtomic, BCHoiv, BCHarmVek, BCBeltrami} for many examples.

         \subsubsection{Solutions to Higher-Order Differential Equations}

            Similarly to the complex case, the bicomplex differential equations considered in the last subsection can be extended to higher-order equations by iterating the operator. We consider these extensions. 

            \begin{deff}
                Let $n$ be a positive integer. The $\bc$-higher-order homogeneous Cauchy-Riemann equation is
                \begin{equation}\label{eq: hobccreqn}
                    \dbar^n w = 0.
                \end{equation}
                Solutions $w:\disk\to\bc$ of \eqref{eq: hobccreqn} are called $\bc$-polyanalytic functions.    
            \end{deff}

            \begin{deff}
                Let $n$ be a positive integer and $A, B \in W^{n-1, q}(\disk,\bc)$. The $\bc$-higher-order iterated Vekua (or $\bc$-HOIV) equations are 
                \begin{equation}\label{eq: bchoiveqn}
                    (\dbar - A - BC)^n w = 0,
                \end{equation}
                where $C$ is the operator that applies bicomplex conjugation. Solutions of \eqref{eq: bchoiveqn} are called $\bc$-HOIV functions or $\bc$-generalized polyanalytic functions.   
            \end{deff}

            The next theorem provides a representation formula for solutions of \eqref{eq: hobccreqn} and \eqref{eq: bchoiveqn} that is similar in structure to those for complex polyanalytic and HOIV functions found in Subsection \ref{subsubsection: hoiv equations}.

            \begin{theorem}[Theorem 6.3 \cite{BCHoiv}]\label{thm: 6point3BCHoiv}
                Let $n$ be a positive integer and $A \in W^{n-1, q}(\disk,\bc)$, $q>2$. Every solution $w: \disk \to\bc$ of 
                \[
                    (\dbar- A)^n w = 0
                \]
                is representable as 
                \[
                    f(z) = \sum_{k=0}^{n-1} \widehat{z^*}^k \varphi_k(z),
                \]
                where $\varphi_k:\disk\to\bc$ satisfies
                \[
                    \dbar \varphi_k = A\varphi_k,
                \]
                for every $k$. 
            \end{theorem}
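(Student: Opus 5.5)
The plan is to reduce the bicomplex equation to two complex higher-order iterated Vekua equations through the idempotent representation. Then we apply Corollary \ref{cor: altbzerohoivrep} to each of them and reassemble the pieces.

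\emph{Splitting the operator.} Write $w = p^+ w^+ + p^- w^-$ and $A = p^+ A^+ + p^- A^-$. By Definition \ref{bcdbardef}, $\dbar = p^+ \frac{\p}{\p z} + p^- \frac{\p}{\p z^*}$. Because $(p^\pm)^2 = p^\pm$ and $p^+p^- = 0$, the operator acts componentwise:
\[
(\dbar - A) w = p^+\Big(\tfrac{\p}{\p z} - A^+\Big) w^+ + p^-\Big(\tfrac{\p}{\p z^*} - A^-\Big) w^-.
\]
Iterating $n$ times, and using the uniqueness in Proposition \ref{everybchasplusandminus}, the equation $(\dbar - A)^n w = 0$ holds if and only if
\[
\Big(\tfrac{\p}{\p z} - A^+\Big)^n w^+ = 0 \quad\text{and}\quad \Big(\tfrac{\p}{\p z^*} - A^-\Big)^n w^- = 0.
\]
Taking complex conjugates of the first equation gives
\[
\Big(\tfrac{\p}{\p z^*} - (A^+)^*\Big)^n (w^+)^* = 0.
\]

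\emph{Regularity and the complex representation.} Next we check that $A^\pm$ (and hence $(A^+)^*$) lie in $W^{n-1,q}(\disk)$. This follows by applying Proposition \ref{cor: corollaryof2.24BCAtomic} to $A$ and to its real partial derivatives up to order $n-1$, since real derivatives commute with the idempotent decomposition. Corollary \ref{cor: altbzerohoivrep} then gives
\[
(w^+)^* = \sum_{k=0}^{n-1} (z^*)^k \psi_k^+ \quad\text{and}\quad w^- = \sum_{k=0}^{n-1} (z^*)^k \psi_k^-,
\]
where $\frac{\p \psi_k^+}{\p z^*} = (A^+)^*\psi_k^+$ and $\frac{\p \psi_k^-}{\p z^*} = A^-\psi_k^-$.

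\emph{Reassembly.} Since $\widehat{z^*} = x - jy$, Proposition \ref{everybchasplusandminus} gives components $x + iy = z$ and $x - iy = z^*$. Therefore
\[
\widehat{z^*}^{\,k} = p^+ z^k + p^- (z^*)^k.
\]
Define $\varphi_k := p^+ (\psi_k^+)^* + p^- \psi_k^-$. Then the plus component of $\widehat{z^*}^{\,k}\varphi_k$ is $z^k(\psi_k^+)^*$. Summing over $k$, this equals the conjugate of $\sum_{k}(z^*)^k\psi_k^+$, which is $w^+$. The minus component sums to $w^-$. Hence $w = \sum_{k=0}^{n-1} \widehat{z^*}^{\,k}\varphi_k$.

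Finally, $\dbar\varphi_k = A\varphi_k$ follows from Theorem \ref{thm: 4.9BCHoiv} with $B = 0$. That theorem requires exactly $\frac{\p (\varphi_k^+)^*}{\p z^*} = (A^+)^*(\varphi_k^+)^*$ and $\frac{\p \varphi_k^-}{\p z^*} = A^-\varphi_k^-$. Both hold because $(\varphi_k^+)^* = \psi_k^+$ and $\varphi_k^- = \psi_k^-$.

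The main point needing care is the first step. We must justify that iterating $\dbar - A$ really decouples into the two components, including the conjugation bookkeeping on the plus side. We must also confirm that the hypotheses of Corollary \ref{cor: altbzerohoivrep} (Sobolev regularity of $A^\pm$) are inherited from $A \in W^{n-1,q}(\disk,\bc)$. Everything after that is algebra in the idempotent basis.
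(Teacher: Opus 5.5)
Your argument is correct. The paper does not prove this statement itself: it imports it from \cite{BCHoiv} and uses it as an input. The main use is the forward direction of Lemma \ref{lem: bicomplexhoividempotentrep} (and of Lemma \ref{lem: bicomplexpolyanalyticidempotentrep} when $A\equiv 0$). There the paper starts from $w=\sum_k \widehat{z^*}^{\,k}\varphi_k$, splits it using $\widehat{z^*}=p^+z+p^-z^*$, and concludes that $(w^+)^*$ and $w^-$ solve complex HOIV equations.

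You run this in the opposite direction. The operator identity $(\dbar - A)^n = p^+\bigl(\frac{\p}{\p z}-A^+\bigr)^n + p^-\bigl(\frac{\p}{\p z^*}-A^-\bigr)^n$ gives the component equations directly. The paper itself uses this identity for the converse half of Lemma \ref{lem: bicomplexhoividempotentrep} and in Proposition \ref{prop: bcdirichletiffcomplexdirichletfornequalstwo}. You then represent each component with Corollary \ref{cor: altbzerohoivrep} and reassemble $w$ with the same idempotent algebra.

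What your route buys:
\begin{itemize}
\item Theorem \ref{thm: 6point3BCHoiv} becomes a formal consequence of the complex theory.
\item Both directions of Lemma \ref{lem: bicomplexhoividempotentrep} follow from the splitting alone, so the paper's reliance on \cite{BCHoiv} there is unnecessary.
\item It avoids the paper's tacit use, in that lemma's forward direction, of the converse of Corollary \ref{cor: altbzerohoivrep}, which is stated only as ``every solution has this form.''
\end{itemize}
The paper's route simply treats the bicomplex representation as a black box.

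One bookkeeping point: the paper attributes Corollary \ref{cor: altbzerohoivrep} partly to Theorem 6.3 of \cite{BCHoiv}, which is the very statement you are proving. To keep the argument visibly non-circular, justify that corollary through the purely complex derivation the paper gives just before it. That derivation is Corollary \ref{corr: hoivrepbzero} together with the fact that $e^{T_\disk[A]}h$, for $h\in Hol(\disk)$, solves $\frac{\p f}{\p z^*}=Af$.
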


            While Theorem \ref{simprin} does not in general extend to solutions of $\bc$-Vekua equations, it does in the case that $B\equiv 0 $. We recall an integral operator that is a right-inverse to $\dbar$ and the relevant results. Using this result, we prove a bicomplex extension of Corollary \ref{corr: hoivrepbzero} as a corollary of Theorem \ref{thm: 6point3BCHoiv}. 

            \begin{deff}
                Let $D$ be a bounded simply connected domain of $\mathbb{C}$. For $f \in L^p(D, \bc)$, $p\geq 1$, the bicomplex Vekua(-Cauchy-Pompeiu-Teodorescu) operator $T_D^\bc(\cdot)$ defined by 
                \[
                    T_D^\bc[f](z) := p^+\left( -\frac{1}{\pi} \iint_D \frac{f^+(\zeta)}{\zeta^*-z^*}\,d\xi\,d\eta\right) + p^- \left(-\frac{1}{\pi} \iint_D \frac{f^-(\zeta)}{\zeta-z}\,d\xi\,d\eta\right),\quad \zeta = \xi + i\eta,
                \]
                exists.
            \end{deff}

            \begin{comm}
                By their respective definitions, observe that
                \begin{align*}
                    T_D^\bc[f](z) 
                    &= p^+ (T_D[(f^+)^*])^* + p^- T_D[f^-].
                \end{align*}
                We encounter this representation again in the first proof of Theorem \ref{thm: bcmainthm}.
            \end{comm}

            \begin{theorem}[Section 3.1 \cite{FundBicomplex}, Section 3.2 \cite{BCTransmutation}]
                For $f \in L^p(\disk, \bc)$, $p\geq 1$, the bicomplex Vekua(-Cauchy-Pompeiu-Teodorescu) operator satisfies
                \[
                    \dbar T_\disk^\bc[f] = f.
                \]
            \end{theorem}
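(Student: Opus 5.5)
The plan is to reduce the identity $\dbar T_\disk^\bc[f] = f$ to two classical complex identities via the idempotent decomposition. Write $f = p^+ f^+ + p^- f^-$. By Proposition \ref{cor: corollaryof2.24BCAtomic}, $f\in L^p(\disk,\bc)$ gives $f^\pm\in L^p(\disk)$, so each component integral in the definition of $T_\disk^\bc$ is a well-defined classical Cauchy--Pompeiu--Teodorescu integral. We then use the Remark following the definition:
\[
T_\disk^\bc[f] = p^+ \bigl(T_\disk[(f^+)^*]\bigr)^* + p^- T_\disk[f^-].
\]
The classical fact we rely on is that $\frac{\p}{\p z^*} T_\disk[g] = g$ in the weak (distributional) sense for $g\in L^p(\disk)$, $p\ge 1$ (see \cite{Vek, BegBook}).

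Next we apply the idempotent form of the operator from Definition \ref{bcdbardef}, $\dbar = p^+ \frac{\p}{\p z} + p^- \frac{\p}{\p z^*}$. Since $p^\pm$ are constants, the relations $(p^\pm)^2=p^\pm$ and $p^+p^-=0$ give
\[
\dbar T_\disk^\bc[f] = p^+ \frac{\p}{\p z}\bigl(T_\disk[(f^+)^*]\bigr)^* + p^- \frac{\p}{\p z^*} T_\disk[f^-].
\]
The $p^-$ term is $f^-$ directly by the classical identity. For the $p^+$ term we use $\frac{\p}{\p z}(u^*) = \bigl(\frac{\p u}{\p z^*}\bigr)^*$. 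This turns the term into $\bigl((f^+)^*\bigr)^* = f^+$. Summing the two terms gives $p^+f^+ + p^-f^- = f$.

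The only real care is in justifying the operator calculus on bicomplex-valued functions. The derivatives $\frac{\p}{\p x}$ and $\frac{\p}{\p y}$ act on $\bc$-valued functions componentwise, and $j$ commutes with $i$, so the idempotent splitting of $\dbar$ is legitimate and derivatives commute with multiplication by the constants $p^\pm$. We also need the conjugation identity in the weak sense. This follows because complex conjugation commutes with real partial derivatives and with integration against real test functions, and $\frac{\p}{\p z}$ and $\frac{\p}{\p z^*}$ are swapped under conjugation.

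I expect no genuine obstacle. The main point to watch is that in the $L^p$ setting the identity is meant weakly, so we should state it in the same sense as the classical complex theorem we cite.
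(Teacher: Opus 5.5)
Your proof is correct. The paper does not prove this statement itself (it only cites \cite{FundBicomplex} and \cite{BCTransmutation}), but your argument is exactly the idempotent reduction the paper uses throughout: you write $T_\disk^\bc[f] = p^+ (T_\disk[(f^+)^*])^* + p^- T_\disk[f^-]$ as in the Remark after the definition, split $\dbar = p^+\frac{\p}{\p z} + p^-\frac{\p}{\p z^*}$, and apply the classical weak identity $\frac{\p}{\p z^*}T_\disk[g] = g$ to each component, using the conjugation swap $\frac{\p}{\p z}(u^*) = \bigl(\frac{\p u}{\p z^*}\bigr)^*$ on the $p^+$ part; your caveat that for $p \geq 1$ the identity holds in the distributional sense is the appropriate one.
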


            \begin{theorem}[Theorem 4.5 \cite{BCHoiv}]\label{thm: fourpointfivebchoiv}
                Let $A \in L^1(\disk,\bc)$. Every solution $w:\disk\to\bc$ of 
                \[
                    \dbar w = Aw
                \]
                is representable as 
                \[
                    w = e^\phi h
                \]
                where $h \in Hol(\disk,\bc)$ and $\phi = T_\disk^\bc[A]$. In the case that $A \in L^q(\disk,\bc)$, $q>2$, $T_\disk^\bc[A] \in C^{0,\alpha}(\overline{D})$, for $\alpha = \frac{q-2}{q}$. 
            \end{theorem}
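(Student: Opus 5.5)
The plan is to reduce the statement to two complex problems using the idempotent representation. There, the complex similarity principle with $B\equiv 0$ is essentially elementary. Afterwards we reassemble the pieces and recognize the result as $e^{T_\disk^\bc[A]}h$.

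\emph{Splitting into components.} Write $w = p^+w^+ + p^-w^-$ and $A = p^+A^+ + p^-A^-$. By Proposition \ref{prop: bcctsiffidemc}, $A^\pm\in L^1(\disk)$. By Theorem \ref{thm: 4.9BCHoiv} with $B\equiv 0$, the equation $\dbar w = Aw$ is equivalent to the pair of complex equations
\[
\frac{\p (w^+)^*}{\p z^*} = (A^+)^*(w^+)^*, \qquad \frac{\p w^-}{\p z^*} = A^- w^-.
\]
Theorem \ref{thm: 4.9BCHoiv} is stated for $L^q$ coefficients. Its proof is pure algebra in the idempotent basis, using Definition \ref{bcdbardef} and $p^+p^-=0$, so it applies verbatim for $A\in L^1$ when derivatives are taken in the distributional sense.

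\emph{Complex similarity principle for each component.} For a complex $a\in L^1(\disk)$, set $\psi=T_\disk[a]$. Then $\frac{\p\psi}{\p z^*}=a$ in the sense of distributions. If $\frac{\p u}{\p z^*}=au$, the product rule gives
\[
\frac{\p}{\p z^*}\bigl(e^{-\psi}u\bigr)=e^{-\psi}\bigl(-au+au\bigr)=0.
\]
Weyl's lemma for $\frac{\p}{\p z^*}$ then shows that $g:=e^{-\psi}u$ is holomorphic. When $a\in L^q$ with $q>2$, this is exactly Theorem \ref{simprin}, and the product rule is legitimate because $\psi$ is H\"older continuous. This yields
\[
(w^+)^* = e^{T_\disk[(A^+)^*]}g^+, \qquad w^- = e^{T_\disk[A^-]}g^-,
\]
with $g^\pm\in Hol(\disk)$.

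\emph{Reassembly.} Define $h := p^+(g^+)^* + p^-g^-$. By Theorem \ref{thm: remark3BCAtomic}, $h\in Hol(\disk,\bc)$, since $((g^+)^*)^*=g^+$ and $g^-$ are both holomorphic. Set
\[
\phi := p^+\bigl(T_\disk[(A^+)^*]\bigr)^* + p^-T_\disk[A^-].
\]
By the remark following the definition of $T_D^\bc$, $\phi = T^\bc_\disk[A]$. By the definition of the bicomplex exponential and the identity $(e^{\zeta})^*=e^{\zeta^*}$,
\[
e^\phi = p^+\bigl(e^{T_\disk[(A^+)^*]}\bigr)^* + p^-e^{T_\disk[A^-]}.
\]
Multiplication is componentwise in the idempotent basis, so
\[
e^\phi h = p^+\bigl(e^{T_\disk[(A^+)^*]}g^+\bigr)^* + p^- e^{T_\disk[A^-]}g^- = p^+w^+ + p^-w^- = w.
\]

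\emph{H\"older regularity.} If $A\in L^q(\disk,\bc)$ with $q>2$, then $A^\pm$ and $(A^+)^*$ lie in $L^q(\disk)$. The classical Vekua estimate gives $T_\disk[\cdot]\in C^{0,\alpha}(\overline\disk)$ with $\alpha=\frac{q-2}{q}$. Conjugation preserves H\"older continuity, and $\|\cdot\|_\bc$ is equivalent to the maximum of the moduli of the idempotent components. Hence $T^\bc_\disk[A]\in C^{0,\alpha}(\overline\disk)$.

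The main point requiring care is the $L^1$ hypothesis. For $L^1$ coefficients, $T_\disk[a]$ need not be continuous, so the product rule and the exponential must be interpreted in $W^{1,1}_{loc}$. I would first try to justify the step $\frac{\p}{\p z^*}(e^{-\psi}u)=0$ by approximating $a$ by smooth functions in $L^1$. If that proves awkward, I would restrict to the case $L^q$ with $q>2$ and apply Theorem \ref{simprin} directly.
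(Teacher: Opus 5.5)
For $A\in L^q(\disk,\bc)$ with $q>2$ your argument is correct. The idempotent splitting, Theorem~\ref{simprin} with $B\equiv 0$ applied to $(A^+)^*$ and $A^-$, the reassembly through $T_\disk^\bc[A]=p^{+}\bigl(T_\disk[(A^+)^*]\bigr)^*+p^{-}T_\disk[A^-]$ and the componentwise exponential, and the componentwise H\"older estimate all go through. This is just the identity $\dbar\bigl(e^{-T_\disk^\bc[A]}w\bigr)=e^{-T_\disk^\bc[A]}(-Aw+Aw)=0$ written in idempotent components. The genuine gap is the $L^1$ case. It cannot be closed by approximation or by a $W^{1,1}_{loc}$ reading of the product rule, because the $L^1$ clause of the statement is false.

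Take $A:=p^{-}(z^*)^{-1}$, so $A^+=0$ and $A^-=1/z^*\in L^1(\disk)$, and take $w:=p^{-}z^*$. Then $\dbar w=p^{-}=Aw$ on $\disk\setminus\{0\}$. So this polynomial solves $\dbar w=Aw$ in every reasonable sense: pointwise a.e., distributionally, and with $Aw$ even bounded. By direct computation, or by the Cauchy--Pompeiu formula applied to $\log|z|^2$ (which vanishes on $\p\disk$ and satisfies $\frac{\p}{\p z^*}\log|z|^2=1/z^*$), one gets $T_\disk[1/\zeta^*]=\log|z|^2$. Hence
\[
e^{T_\disk^\bc[A]}=p^{+}+p^{-}|z|^{2},\qquad e^{-T_\disk^\bc[A]}w=p^{-}\,\frac{1}{z}.
\]
Since $e^{T_\disk^\bc[A]}$ is invertible off the origin, any representation $w=e^{T_\disk^\bc[A]}h$ forces $h^-=1/z$ on $\disk\setminus\{0\}$, and $1/z$ is not holomorphic in $\disk$.

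In your complex step this example is $a=1/z^*$, $u=z^*$, $\psi=\log|z|^2$. The formal identity $\frac{\p}{\p z^*}(e^{-\psi}u)=0$ holds off the origin, but distributionally $\frac{\p}{\p z^*}(1/z)=\pi\delta_0$, so Weyl's lemma does not apply. The product rule breaks down exactly where $\psi$ is singular; here $e^{-\psi}=|z|^{-2}\notin L^1_{loc}$.

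Your fallback is therefore the right repair: restrict the statement to $A\in L^q(\disk,\bc)$, $q>2$. There $T_\disk[\cdot]$ is H\"older continuous and your proof is complete. This is also the only case the paper actually uses, in Corollary~\ref{corr: bcsimprinhoivrep} and Theorem~\ref{thm: bchoivsimprinreptheorem}.
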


            \begin{corr}\label{corr: bcsimprinhoivrep}
                Let $n$ be a positive integer and $A \in W^{n-1, q}(\disk,\bc)$, $q>2$. Every solution $w: \disk \to\bc$ of 
                \[
                    (\dbar- A)^n w = 0
                \]
                is representable as 
                \[
                    w(z) = e^{T_\disk^\bc[A](z)}\sum_{k=0}^{n-1} \widehat{z^*}^k h_k(z),
                \]
                where $h_k\in Hol(\disk,\bc)$, for every $k$. 
            \end{corr}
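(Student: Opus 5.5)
The plan is to combine Theorem \ref{thm: 6point3BCHoiv} with Theorem \ref{thm: fourpointfivebchoiv}, in the same way that Corollary \ref{corr: hoivrepbzero} was obtained from Theorem \ref{thm: wbdrep} and Theorem \ref{simprin}.

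First, let $w:\disk\to\bc$ solve $(\dbar - A)^n w = 0$ with $A \in W^{n-1,q}(\disk,\bc)$. By Theorem \ref{thm: 6point3BCHoiv},
\[
w(z) = \sum_{k=0}^{n-1} \widehat{z^*}^{\,k}\varphi_k(z),
\]
where each $\varphi_k$ satisfies $\dbar\varphi_k = A\varphi_k$. Since $W^{n-1,q}(\disk,\bc)\subset L^q(\disk,\bc)\subset L^1(\disk,\bc)$ on the bounded disk, Theorem \ref{thm: fourpointfivebchoiv} applies to each $\varphi_k$. It gives $\varphi_k = e^{\phi}h_k$ with $h_k\in Hol(\disk,\bc)$ and $\phi = T_\disk^\bc[A]$.

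The point to emphasize is that $\phi$ is the same for every $k$, because it depends only on $A$ and not on $\varphi_k$. This is exactly where the hypothesis $B\equiv 0$ is used: in Theorem \ref{simprin} the exponent involves $\overline{w}/w$. Consequently the common factor can be pulled out:
\[
w = \sum_{k} \widehat{z^*}^{\,k} e^{\phi}h_k = e^{\phi}\sum_k \widehat{z^*}^{\,k}h_k.
\]
This step uses only the commutativity of multiplication in $\bc$, which follows from the defining product rule.

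The only step needing care is confirming that Theorem \ref{thm: fourpointfivebchoiv} gives the exponent as $T_\disk^\bc[A]$ itself, independent of the particular solution. If one prefers to check this directly, work componentwise in the idempotent representation. Using Definition \ref{bcdbardef} and the product rule, $\dbar e^{T_\disk^\bc[A]} = A\,e^{T_\disk^\bc[A]}$, since $\dbar T_\disk^\bc[A]=A$ and the exponential acts componentwise on $p^\pm$. The exponential is invertible, with inverse $p^+e^{-\phi^+}+p^-e^{-\phi^-}$. Hence for each $k$ the quotient $h_k := e^{-\phi}\varphi_k$ satisfies $\dbar h_k = e^{-\phi}(\dbar\varphi_k - A\varphi_k) = 0$, so $h_k$ is $\bc$-holomorphic. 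This completes the argument.
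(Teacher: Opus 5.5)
Your proposal is correct and follows the paper's proof essentially verbatim: apply Theorem \ref{thm: 6point3BCHoiv} to write $w=\sum_{k=0}^{n-1} \widehat{z^*}^{\,k}\varphi_k$, use Theorem \ref{thm: fourpointfivebchoiv} to write each $\varphi_k = e^{T_\disk^\bc[A]}h_k$ with the same exponent for every $k$, and then factor that exponential out. Your extra componentwise check that $e^{-T_\disk^\bc[A]}\varphi_k$ is $\bc$-holomorphic is sound; the paper does not include it and simply cites the theorem.
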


            \begin{proof}
                By Theorem \ref{thm: 6point3BCHoiv}, 
                \[
                    w(z) = \sum_{k=0}^{n-1} \widehat{z^*}^k \varphi_k(z),
                \]
                where $\varphi_k:\disk\to\bc$ satisfies
                \[
                    \dbar \varphi_k = A\varphi_k,
                \]
                for every $k$. By Theorem \ref{thm: fourpointfivebchoiv}, there exists $h_k \in Hol(\disk,\bc)$ such that 
                \[
                    \varphi_k = e^{T_\disk^\bc[A]} h_k,
                \]      
                for every $k$. Thus, 
                \begin{align*}
                    w(z) &= \sum_{k=0}^{n-1} \widehat{z^*}^k \varphi_k(z) \\
                    &= \sum_{k=0}^{n-1} \widehat{z^*}^k e^{T_\disk^\bc[A](z)} h_k(z) \\
                    &= e^{T_\disk^\bc[A](z)} \sum_{k=0}^{n-1} \widehat{z^*}^k h_k(z).
                \end{align*}
            \end{proof}

            As in the case of complex-valued functions, restriction to $\disk$ here is unnecessary, as all the statements above are also true with $\disk$ replaced with a bounded simply connected domain. An immediate generalization of Corollary \ref{corr: bcsimprinhoivrep} that we use in Section \ref{last one} is the following.

            \begin{theorem}\label{thm: bchoivsimprinreptheorem}
                For a bounded simply connected set $D \subset \mathbb{C}$, let $n$ be a positive integer and $A \in W^{n-1, q}(D,\bc)$, $q>2$. Every solution $w: D \to\bc$ of 
                \[
                    (\dbar- A)^n w = 0
                \]
                is representable as 
                \[
                    w(z) = e^{T_D^\bc[A](z)}\sum_{k=0}^{n-1} \widehat{z^*}^k h_k(z),
                \]
                where $h_k\in Hol(D,\bc)$, for every $k$.
            \end{theorem}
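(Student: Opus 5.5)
The plan is to reduce to the disk case, Corollary \ref{corr: bcsimprinhoivrep}, by retracing its proof with $\disk$ replaced by $D$. The inputs are Theorem \ref{thm: 6point3BCHoiv} and Theorem \ref{thm: fourpointfivebchoiv}. Both rest on the idempotent decomposition and on complex results (Theorem \ref{thm: wbdrep}, Theorem \ref{simprin}, Corollary \ref{cor: altbzerohoivrep}) that the paper has already noted hold on bounded simply connected domains. So the first step is to justify the $D$-versions of these two bicomplex results.

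For the analogue of Theorem \ref{thm: fourpointfivebchoiv} on $D$, take $\varphi = p^+\varphi^+ + p^-\varphi^-$ satisfying $\dbar\varphi = A\varphi$. By Theorem \ref{thm: 4.9BCHoiv} with $B\equiv 0$, which I would re-derive on $D$ by the same componentwise computation, this splits into two complex equations:
\[
\frac{\p (\varphi^+)^*}{\p z^*} = (A^+)^*(\varphi^+)^*, \qquad \frac{\p \varphi^-}{\p z^*} = A^-\varphi^-.
\]
The similarity principle on $D$ (Theorem \ref{simprin} with $B=0$, valid on $D$ as the paper notes) then gives
\[
(\varphi^+)^* = e^{T_D[(A^+)^*]}g^+, \qquad \varphi^- = e^{T_D[A^-]}g^-,
\]
with $g^\pm$ holomorphic. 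Conjugating the first identity and using the remark $T_D^\bc[f] = p^+(T_D[(f^+)^*])^* + p^-T_D[f^-]$ together with the definition of the bicomplex exponential, we get $\varphi = e^{T_D^\bc[A]}h$, where $h = p^+(g^+)^* + p^- g^-$. This $h$ lies in $Hol(D,\bc)$ by Theorem \ref{thm: remark3BCAtomic}, whose componentwise proof is local and so also holds on $D$. Proposition \ref{prop: bcctsiffidemc} guarantees that $A^\pm$ inherit the required integrability.

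For the analogue of Theorem \ref{thm: 6point3BCHoiv} on $D$, I would take the same idempotent route. Using $\dbar = p^+\p_z + p^-\p_{z^*}$, the equation $(\dbar - A)^n w = 0$ splits into
\[
\Bigl(\tfrac{\p}{\p z^*} - (A^+)^*\Bigr)^n (w^+)^* = 0, \qquad \Bigl(\tfrac{\p}{\p z^*} - A^-\Bigr)^n w^- = 0,
\]
where $A^\pm \in W^{n-1,q}(D)$ by the Sobolev analogue of Proposition \ref{prop: bcctsiffidemc}. The same conclusion can in fact be read off directly from Theorem \ref{thm: hoivrepgendom} applied to each component:
\[
(w^+)^* = e^{T_D[(A^+)^*]}\sum_k (z^*)^k h_k^+, \qquad w^- = e^{T_D[A^-]}\sum_k (z^*)^k h_k^-.
\]
To recombine, observe that $\widehat{z^*} = x - jy = p^+ z + p^- z^*$, because $(\widehat{z^*})^\pm = x \pm iy$. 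Hence
\[
\widehat{z^*}^k = p^+ z^k + p^-(z^*)^k,
\]
and conjugating the $+$ component yields
\[
w = e^{T_D^\bc[A]}\sum_k \widehat{z^*}^k\bigl(p^+(h_k^+)^* + p^- h_k^-\bigr).
\]
Each coefficient $p^+(h_k^+)^* + p^- h_k^-$ belongs to $Hol(D,\bc)$ by Theorem \ref{thm: remark3BCAtomic}.

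The main obstacle is bookkeeping rather than analysis. One must check carefully that the conjugations on the $+$ component produce exactly $T_D^\bc[A]$ and the powers $\widehat{z^*}^k$, and not their bicomplex conjugates. One must also confirm that the idempotent splitting of $(\dbar - A)^n$ commutes correctly with the iteration. This second point holds because $p^\pm$ are orthogonal idempotents, so products and powers act componentwise.
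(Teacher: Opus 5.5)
Your proof is correct, but the argument you actually carry out differs from the paper's. The paper gives no separate proof. It asserts that the disk results remain valid on bounded simply connected $D$ and treats the theorem as an immediate generalization of Corollary \ref{corr: bcsimprinhoivrep}. In that argument one writes $w=\sum_{k}\widehat{z^*}^{\,k}\varphi_k$ with $\dbar\varphi_k=A\varphi_k$ (Theorem \ref{thm: 6point3BCHoiv} on $D$), then applies the bicomplex similarity principle (Theorem \ref{thm: fourpointfivebchoiv} on $D$) to each $\varphi_k$.

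Your route instead starts from the componentwise identity $(\dbar-A)^n w=p^+\left(\frac{\p}{\p z}-A^+\right)^n w^+ + p^-\left(\frac{\p}{\p z^*}-A^-\right)^n w^-$, which is exact (it is the computation in the converse half of Lemma \ref{lem: bicomplexhoividempotentrep}). You then apply the complex Theorem \ref{thm: hoivrepgendom} to $(w^+)^*$ and $w^-$. Finally you recombine using $\widehat{z^*}^{\,k}=p^+z^k+p^-(z^*)^k$, $T_D^\bc[A]=p^+(T_D[(A^+)^*])^*+p^-T_D[A^-]$ and the componentwise bicomplex exponential. The bookkeeping you flagged does close up: the $p^+$ components of $e^{T_D^\bc[A]}$ and $\widehat{z^*}^{\,k}$ are exactly $e^{(T_D[(A^+)^*])^*}$ and $z^k$.

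What your route buys is that its only domain-dependent input is the complex representation on $D$, which the paper already records. The bicomplex statement then reduces to algebra, instead of resting on the asserted extension of two bicomplex theorems from the disk to $D$. It also makes your first paragraph (the $D$-version of Theorem \ref{thm: fourpointfivebchoiv}) unnecessary. The paper's route is shorter and stays at the bicomplex level, parallel to the disk case.

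One step needs a word: $(A^+)^*,A^-\in W^{n-1,q}(D)$, the Sobolev analogue of Proposition \ref{prop: bcctsiffidemc}. It holds because weak derivatives act componentwise in the idempotent decomposition, and the paper uses it the same way.
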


         \subsubsection{Dirichlet Problem}

         Previously, the bicomplex Dirichlet problem 
         \begin{equation}\label{bcdir}\begin{cases}
                \p \dbar w = 0, & \text{ in }\disk\\
                w = g, & \text{ on } \p\disk,
         \end{cases}\end{equation}
         was considered in \cite{BCSchwarz}. Note that since 
         \[
            \p \dbar = \left(p^+ \frac{\p}{\p z^*} + p^- \frac{\p }{\p z} \right) \left(p^+ \frac{\p }{\p z} + p^- \frac{\p}{\p z^*}\right) = 2 \frac{\p^2}{\p z\p z^*} = \frac{1}{2}\Delta , 
         \]
         functions that solve $\p \dbar w = 0$ are harmonic functions in the classical sense. The following two theorems were proved in \cite{BCSchwarz} and show that \eqref{bcdir} shares characteristics with the classical Dirichlet problem \eqref{eq: classicaldirichlet}.
         
         \begin{theorem}[Theorem 4.1 \cite{BCSchwarz}]
             The bicomplex Dirichlet problem
            \[
                \begin{cases}
                  \p \dbar f = 0, &\text{ in } \disk,\\
                  f = 0, &\text{ on } \p\disk,
                \end{cases}
            \]
        has only the trivial solution.
        \end{theorem}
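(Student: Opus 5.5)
The plan is to reduce the bicomplex problem to two complex Dirichlet problems for harmonic functions through the idempotent representation. Write $f = p^+ f^+ + p^- f^-$, with $f^\pm$ complex-valued. The computation preceding the statement gives $\p\dbar = 2\frac{\p^2}{\p z\p z^*} = \frac{1}{2}\Delta$. This is a real scalar operator, so it acts componentwise on the idempotent decomposition:
\[
    \p\dbar f = p^+\left(\tfrac12\Delta f^+\right) + p^-\left(\tfrac12\Delta f^-\right).
\]
We could also see this directly by multiplying out the idempotent forms of $\p$ and $\dbar$ in Definition \ref{bcdbardef}, using $(p^\pm)^2=p^\pm$ and $p^+p^-=0$.

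By the uniqueness in Proposition \ref{everybchasplusandminus}, the equation $\p\dbar f = 0$ in $\disk$ holds if and only if $\Delta f^+ = 0$ and $\Delta f^- = 0$ in $\disk$. Likewise, the boundary condition $f=0$ on $\p\disk$ holds if and only if $f^+ = 0 = f^-$ on $\p\disk$. For the boundary condition to make sense we should also note, via Proposition \ref{prop: bcctsiffidemc}, that continuity of $f$ up to $\p\disk$ in the bicomplex norm is equivalent to continuity of $f^\pm$ up to the boundary. The same equivalence applies to whatever regularity class is assumed for the solutions.

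The components $f^\pm$ therefore each solve the classical Dirichlet problem \eqref{eq: classicaldirichlet} on $\disk$ with zero boundary data. The classical problem is uniquely solvable; concretely, the real and imaginary parts of $f^\pm$ are real harmonic functions vanishing on $\p\disk$, so the maximum principle forces them to vanish. Hence $f^+\equiv f^-\equiv 0$ and $f = p^+\cdot 0 + p^-\cdot 0 = 0$.

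The only step needing care is the claim that $\p\dbar$ decouples on the idempotent components. This holds because $\p\dbar$ equals the real operator $\frac12\Delta$ with no conjugation involved, in contrast with the first-order case in Theorem \ref{thm: remark3BCAtomic}, where $\dbar$ acts differently on the two components. Once that is in place, the rest is the classical uniqueness result.
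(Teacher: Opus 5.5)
Your proposal is correct and follows the route the paper points to; the paper itself only cites the result, after observing that $\p\dbar$ is a multiple of the Laplacian. Because $\p\dbar$ is a real scalar operator, it acts componentwise on $f = p^+ f^+ + p^- f^-$, so the problem splits into two classical homogeneous Dirichlet problems whose only solutions are $f^\pm \equiv 0$, given, as you note, that $f$ is continuous on $\overline{\disk}$. One slip inherited from the paper, harmless for a homogeneous equation: multiplying out the idempotent forms with $p^+ + p^- = 1$ gives $\p\dbar = \frac{\p^2}{\p z\,\p z^*} = \frac14\Delta$, not $\frac12\Delta$.
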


        \begin{theorem}[Theorem 4.2 \cite{BCSchwarz}]
            For $g \in L^1(\p \disk, \bc)$, the bicomplex Dirichlet problem
            \[
                \begin{cases}
                    \p \dbar f = 0, & \text{ in } \disk,\\
                    f = g, & \text{ on } \p\disk,
                \end{cases}
            \]
        is uniquely solved by 
        \[
            f = p^+ \frac{1}{2\pi} \int_0^{2\pi} g^+(e^{it})P_r(\theta - t) \,dt  + p^- \frac{1}{2\pi} \int_0^{2\pi} g^-(e^{it})P_r(\theta - t) \,dt,
        \]
        where $P_r(\theta) := \frac{1-r^2}{1-2r\cos(\theta) + r^2}$ is the Poisson kernel on $\disk$. 
        \end{theorem}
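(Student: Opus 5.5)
The plan is to reduce the bicomplex problem to two classical complex Dirichlet problems through the idempotent representation, exactly as the paper does elsewhere. Write $f = p^+ f^+ + p^- f^-$ and $g = p^+ g^+ + p^- g^-$ using Proposition \ref{everybchasplusandminus}.

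\textbf{Step 1 (the equation).} Since $p^\pm$ are constants, the operators commute with multiplication by them. We computed earlier that $\p\dbar = \frac{1}{2}\Delta$, and $\Delta$ is a real scalar operator. Hence
\[
\p\dbar f = p^+ \tfrac{1}{2}\Delta f^+ + p^- \tfrac{1}{2}\Delta f^- .
\]
By the uniqueness part of Proposition \ref{everybchasplusandminus}, $\p\dbar f = 0$ in $\disk$ if and only if $\Delta f^+ = 0$ and $\Delta f^- = 0$ in $\disk$. So $f$ solves the equation exactly when both idempotent components are classical complex-valued harmonic functions.

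\textbf{Step 2 (the boundary data).} The bicomplex norm $\|w\|_\bc = \sqrt{(|w^+|^2+|w^-|^2)/2}$ is comparable to $\max(|w^+|,|w^-|)$. Two consequences follow.
\begin{itemize}
\item $g \in L^1(\p\disk,\bc)$ if and only if $g^\pm \in L^1(\p\disk)$. This is the boundary analogue of Proposition \ref{prop: bcctsiffidemc}, with the same proof.
\item $f$ attains the boundary value $g$ in the $\bc$-norm sense if and only if $f^+ \to g^+$ and $f^- \to g^-$ in the corresponding complex sense. This holds for radial or nontangential limits a.e., or in $L^1$ on circles $|z|=r$, whichever notion of boundary value is intended.
\end{itemize}
By uniqueness of the idempotent decomposition, the condition $f = g$ on $\p\disk$ splits into $f^\pm = g^\pm$ on $\p\disk$.

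\textbf{Step 3 (existence and uniqueness).} The problem is now two independent classical Dirichlet problems $\Delta f^\pm = 0$ in $\disk$, $f^\pm = g^\pm$ on $\p\disk$, with $L^1$ data.
\begin{itemize}
\item \emph{Existence.} Apply the Poisson integral separately to the real and imaginary parts of $g^\pm$. This produces harmonic $f^\pm$ with the formula in the statement, and the classical Fatou-type theorems give the boundary behaviour.
\item \emph{Uniqueness.} If $f_1$ and $f_2$ both solve the problem, then their difference solves it with zero data. Either Theorem 4.1 of \cite{BCSchwarz} or classical uniqueness applied componentwise forces the difference to vanish.
\end{itemize}
Recombining the components gives $f = p^+ P[g^+] + p^- P[g^-]$, which is the stated formula.

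\textbf{Main obstacle.} The only delicate point is making precise the sense in which $f = g$ on $\p\disk$ for merely $L^1$ data. For such data, uniqueness holds only within the appropriate class, such as Poisson integrals of $L^1$ functions or harmonic functions with $L^1$-bounded circle means and the given a.e. limits. I would fix that class first, check that it is preserved under the idempotent splitting via the norm comparison above, and then invoke the classical complex theory for each component.
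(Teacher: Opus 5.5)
Your argument is correct. The paper does not prove this statement itself; it quotes it from \cite{BCSchwarz}. Your reduction is the same idempotent-splitting technique the paper uses for every bicomplex result in Section \ref{section: BicomplexExtension}. Here it is even simpler, because $\p\dbar$ acts as the same real operator on both components, so the $p^+$ component need not be conjugated.

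Incidentally, $\p\dbar = \frac{\p^2}{\p z\,\p z^*} = \frac14\Delta$ rather than $\frac12\Delta$. The constant is irrelevant for the homogeneous equation.

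Your caveat about the meaning of ``$f=g$ on $\p\disk$'' is well placed. With only a.e. radial limits, uniqueness genuinely fails: the Poisson kernel itself is harmonic and has radial limit $0$ at every boundary point except $1$. The uniqueness claim should therefore be read in a class such as harmonic functions with $f(re^{i\cdot}) \to g$ in $L^1(\p\disk,\bc)$. In that class your componentwise argument, via the norm equivalence, goes through.
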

         
         The problem \eqref{bcdir} is not directly considered in this work, but we include it to compare with the classical problem and contrast with the results for the Dirichlet problem for the $\bc$-bianalytic functions considered in Section \ref{section: BicomplexExtension}.

\section{Bicomplex Extension}\label{section: BicomplexExtension}

    The extension of Theorem \ref{thm: dirichletbianalyticwellposed} that is Theorem \ref{thm: mainthm} concludes our work generalizing the Dirichlet boundary value problem for the Bitsadze equation. Now, we extend some preexisting results from the literature concerning Dirichlet problems for the Bitsadze and Vekua-Bitsadze equations to the setting of bicomplex analogues of the Bitsadze and Vekua-Bitsadze equations. Note, from here until the end of the paper, we use the notational convention that the complex conjugate of a complex-valued $z = x+iy$, i.e., $x, y \in \mathbb{R}$, is $z^* = x-iy$ and the bicomplex conjugate of a bicomplex valued $w = z_1 + j z_2$, i.e., $z_1,z_2 \in \mathbb{C}$, is $\z = z_1 - j z_2$. 

    The $\bc$-Bitsadze equation is given by
    \[
        \dbar^2 w = 0,
    \]
    and the $\bc$-Vekua-Bitsadze equations that we consider are any equation of the form 
    \[
        (\dbar - A)^2 w = 0,
    \]
    where $A \in W^{1,q}(\disk,\bc)$, $q>2$. Clearly, these are the Bitsadze equation and the Vekua-Bitsadze equation with $\frac{\p}{\p z^*}$ replaced by $\dbar$, as well as letting the coefficient function $A$ and the solution $w$ take values in the bicomplex numbers.

    \subsection{The Dirichlet Problem for the Bicomplex Bitsadze Equation}\label{section: bcdirifchletbitsadzeequation}

        \subsubsection{The Unit Circle}

        We prove the conclusion of Proposition \ref{prop: dirichletbianalyticcircle} for the $\bc$-Bitsadze equation and include an immediate corollary that extends the result to higher-order equations. We begin with a structural lemma that relates $\bc$-polyanalyticity of a function with polyanalyticity of the complex components of its idempotent representation. The lemma generalizes Theorem \ref{thm: remark3BCAtomic} to higher-order equations and is crucial to the proof of Proposition \ref{prop: bicomplexbitsadzedirichlet}, as well as to the proof of Theorem \ref{thm: generalizationOfBianalyticWellPosed} later.

            \begin{lem}\label{lem: bicomplexpolyanalyticidempotentrep}
                Let $n$ be a positive integer. A function $w = p^+ w^+ + p^- w^-$ is a solution to
                \[
                    \dbar^n w = 0
                \]
                if and only if 
                \[
                    \frac{\p^n (w^+)^*}{\p (z^*)^n} = 0
                \]
                and 
                \[
                    \frac{\p^n w^-}{\p (z^*)^n} = 0.
                \]
            \end{lem}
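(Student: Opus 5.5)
The plan is to compute $\dbar^n w$ directly in idempotent form and then compare components. The base case is Theorem \ref{thm: remark3BCAtomic}; the general case follows by iterating the operator.

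First, since $p^\pm$ are constants with $(p^\pm)^2 = p^\pm$ and $p^+p^- = 0$, Definition \ref{bcdbardef} gives, for $w = p^+ w^+ + p^- w^-$,
\[
    \dbar w = p^+ \frac{\p w^+}{\p z} + p^- \frac{\p w^-}{\p z^*}.
\]
Here the cross terms vanish because $p^+p^- = 0$. By induction on $n$ (applying $\dbar$ to this idempotent form again), we get
\[
    \dbar^n w = p^+ \frac{\p^n w^+}{\p z^n} + p^- \frac{\p^n w^-}{\p (z^*)^n}.
\]
Some care is needed in the induction step. The components of $\dbar^{n-1} w$ in the idempotent representation are exactly $\frac{\p^{n-1} w^+}{\p z^{n-1}}$ and $\frac{\p^{n-1} w^-}{\p (z^*)^{n-1}}$. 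This uses the uniqueness in Proposition \ref{everybchasplusandminus}, and it also uses that these derivatives are complex-valued, so they really are the idempotent components.

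Second, by the uniqueness in Proposition \ref{everybchasplusandminus}, $\dbar^n w = 0$ holds if and only if both idempotent components vanish:
\[
    \frac{\p^n w^+}{\p z^n} = 0 \quad\text{and}\quad \frac{\p^n w^-}{\p (z^*)^n} = 0.
\]
The second condition is already the claimed one. For the first, take complex conjugates. Since $\left(\frac{\p f}{\p z}\right)^* = \frac{\p f^*}{\p z^*}$, iterating $n$ times gives
\[
    \left(\frac{\p^n w^+}{\p z^n}\right)^* = \frac{\p^n (w^+)^*}{\p (z^*)^n}.
\]
So the first condition is equivalent to $\frac{\p^n (w^+)^*}{\p (z^*)^n} = 0$, which completes the equivalence.

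The only delicate point is regularity. We need enough differentiability, classical or distributional, for the iterated derivatives and for conjugation to commute with them. Both are linear real operations and commute in the distributional sense, so I expect no real obstacle. The main thing to verify carefully is that the idempotent decomposition passes through each application of $\dbar$.
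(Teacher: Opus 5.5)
Your proof is correct. It differs from the paper's for the forward implication.

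\textbf{The paper's route.} It proves necessity structurally:
\begin{enumerate}
\item It invokes the representation $w=\sum_{k=0}^{n-1}\widehat{z^*}^{\,k}h_k$ with $h_k\in Hol(\disk,\bc)$, which is Theorem \ref{thm: 6point3BCHoiv} with $A\equiv 0$.
\item It splits each $h_k$ via Theorem \ref{thm: remark3BCAtomic}.
\item It uses $\widehat{z^*}=p^+z+p^-z^*$ to read off $w^+=\sum z^k h_k^+$ and $w^-=\sum (z^*)^k h_k^-$.
\item It concludes from the polyanalytic representation.
\end{enumerate}
Only for sufficiency does the paper use your direct computation $\dbar^n=p^+\frac{\p^n}{\p z^n}+p^-\frac{\p^n}{\p (z^*)^n}$, applied to $p^+ f^* + p^- g$. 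The paper reuses this operator identity in the proof of Proposition \ref{prop: bcdirichletiffcomplexdirichletfornequalstwo}.

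\textbf{Your route.} You get both directions at once from that identity plus uniqueness of the idempotent decomposition.
\begin{itemize}
\item It is shorter.
\item It needs none of the cited representation results; in particular Theorem \ref{thm: 6point3BCHoiv} is stated only on $\disk$.
\item It works unchanged on any domain and for distributional derivatives.
\item You do not actually need Theorem \ref{thm: remark3BCAtomic} as a ``base case''; your computation covers $n=1$ directly.
\end{itemize}

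\textbf{What the paper's route buys.} It exhibits the explicit polyanalytic form of $(w^+)^*$ and $w^-$ in terms of the $\bc$-holomorphic coefficients $h_k$ of $w$.
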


            \begin{proof}
                Suppose $w= p^+ w^+ + p^- w^-$ solves
                \[
                    \dbar^n w = 0.
                \]
                By Theorem \ref{thm: 6point3BCHoiv} with $A \equiv 0$,
                \[
                    w(z) = \sum_{k=0}^{n-1} \widehat{z^*}^k h_k(z),
                \]
                where $h_k \in Hol(\disk,\bc)$, for every $k$. By Theorem \ref{thm: remark3BCAtomic}, $(h_k^+)^*, h_k^- \in Hol(\disk)$, for every $k$. Since $\widehat{z^*} = p^+ z + p^- z^*$, it follows that 
                \begin{align*}
                     w(z) &= \sum_{k=0}^{n-1} \widehat{z^*}^k h_k(z)\\
                     &= \sum_{k=0}^{n-1} (p^+ z + p^- z^*)^k (p^+ h_k^+ + p^- h_k^-) \\
                     &= \sum_{k=0}^{n-1} (p^+ z^k + p^- (z^*)^k) (p^+ h_k^+ + p^- h_k^-) \\
                     &= \sum_{k=0}^{n-1} \left(p^+ z^k h_k^+ + p^- (z^*)^k h_k^-\right) \\
                     &= p^+ \sum_{k=0}^{n-1} z^k h_k^+ + p^- \sum_{k=0}^{n-1} (z^*)^k h_k^-.
                \end{align*}
                Thus, 
                \[
                    w^+ = \sum_{k=0}^{n-1} z^k h_k^+ ,
                \]
                and
                \[
                    w^- = \sum_{k=0}^{n-1} (z^*)^k h_k^-.
                \]
                Therefore, by Corollary \ref{corr: hoivrepbzero} with $A \equiv 0$ (also see \cite{Balk}), $(w^+)^*, w^-$ are polyanalytic functions of order $n$. 

                Now, suppose $f, g : \disk \to \mathbb{C}$ are polyanalytic functions of order $n$. Construct the function $w: \disk \to \bc$ by 
                \[
                    w(z) := p^+ (f(z))^* + p^- g(z).
                \]
                Observe that 
                \begin{align*}
                    \dbar^n w &= \left( p^+ \frac{\p}{\p z} + p^- \frac{\p}{\p z^*}\right)^n (p^+ (f)^* + p^- g)\\
                    &= \left( p^+ \frac{\p^n}{\p z^n} + p^- \frac{\p^n}{\p (z^*)^n}\right) (p^+ (f)^* + p^- g)\\
                    &= p^+ \frac{\p^n (f)^*}{\p z^n} + p^- \frac{\p^n g }{\p (z^*)^n} \\
                    &= p^+ \left(\frac{\p^n f}{\p (z^*)^n}\right)^* + p^- \frac{\p^n g }{\p (z^*)^n} \\
                    &= p^+ (0) + p^- ( 0) = 0. 
                \end{align*}
                Therefore, $w$ solves 
                \[
                    \dbar^n w = 0.
                \]
            \end{proof}

            \begin{prop}\label{prop: bicomplexbitsadzedirichlet}
                The Dirichlet problem for the $\bc$-Bitsadze equation
                \begin{equation}\label{eq: bicomplexdirichleteuation}
                    \begin{cases}
                    \dbar^2 w = 0, & \text{ in } \disk,\\
                    w = 0, & \text{ on } \p\disk,
                    \end{cases}
                \end{equation}
                has infinitely many solutions. 
            \end{prop}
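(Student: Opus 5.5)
The plan is to reduce \eqref{eq: bicomplexdirichleteuation} to two complex Bitsadze problems by means of the idempotent representation, and then to use Proposition \ref{prop: dirichletbianalyticcircle}.

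Write a candidate solution as $w = p^+ w^+ + p^- w^-$. By Lemma \ref{lem: bicomplexpolyanalyticidempotentrep} with $n=2$, $w$ solves $\dbar^2 w = 0$ in $\disk$ if and only if $(w^+)^*$ and $w^-$ are bianalytic, that is,
\[
\frac{\p^2 (w^+)^*}{\p (z^*)^2} = 0 = \frac{\p^2 w^-}{\p (z^*)^2}.
\]
By Proposition \ref{everybchasplusandminus}, the idempotent components of a bicomplex number are unique. Hence $w = 0$ on $\p\disk$ if and only if $w^+ = 0$ and $w^- = 0$ on $\p\disk$, which is equivalent to $(w^+)^* = 0$ and $w^- = 0$ on $\p\disk$. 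So $w$ solves \eqref{eq: bicomplexdirichleteuation} exactly when both $(w^+)^*$ and $w^-$ solve the complex problem \eqref{eq: classicBitsadzeDirichlet}.

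Next we construct the solutions. Take any two solutions $f, g$ of \eqref{eq: classicBitsadzeDirichlet}. Concretely, use the family from the remark following Proposition \ref{prop: dirichletbianalyticcircle}, namely $f = w_k = (1 - z z^*) z^k$ and $g = w_m$, and define
\[
W_{k,m} := p^+ f^* + p^- g.
\]
By the reverse direction of Lemma \ref{lem: bicomplexpolyanalyticidempotentrep}, each $W_{k,m}$ satisfies $\dbar^2 W_{k,m} = 0$. On $|z|=1$ both components vanish, so $W_{k,m} = 0$ on $\p\disk$. Continuity up to the boundary follows from Proposition \ref{prop: bcctsiffidemc}, since the components are polynomials in $z, z^*$.

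It remains to show that these solutions are genuinely infinitely many, and in fact linearly independent. The simplest choice is $W_k := p^- w_k$, for which $W_k^- = w_k$. If $\sum c_k W_k = 0$ with $c_k \in \bc$, we multiply by $p^-$ and read off the $p^-$-component to get $\sum c_k^- w_k = 0$. Linear independence of the $w_k$ over $\C$, visible from the distinct powers $z^k$, then gives $c_k^- = 0$ for every $k$. Hence the $W_k$ are independent over $\C$, and even over $\bc$ modulo the $p^+$ ideal. For an infinitude statement, distinctness of the $W_k$ already suffices.

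The only delicate point is keeping the conjugation on the $p^+$ component straight when passing between $w^+$ and $(w^+)^*$. Choosing solutions supported on the $p^-$ component sidesteps this issue entirely.
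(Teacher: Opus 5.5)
Your proposal is correct and follows essentially the same route as the paper: you use the idempotent-representation lemma for $\dbar^n$ to reduce to the complex Bitsadze problem on the unit circle, then build solutions $p^+ f^* + p^- g$ from complex solutions $f, g$ and check that they vanish on $\p\disk$. Your explicit family $W_k = p^- w_k$ and the component-wise independence argument only make explicit the distinctness of the constructed solutions, which the paper leaves implicit in the uniqueness of the idempotent representation.
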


            \begin{proof}

                By Lemma \ref{lem: bicomplexpolyanalyticidempotentrep}, every solution of $\dbar^2 w = 0$ has the form 
                \[
                    w = p^+ w^+ + p^- w^-,
                \]
                where
                \[
                    \frac{\p^2 (w^+)^*}{\p (z^*)^2} = 0 = \frac{\p^2 w^-}{\p (z^*)^2}.
                \]
                By Proposition \ref{prop: dirichletbianalyticcircle}, there exist infinitely many functions $f$ that satisfy 
                \begin{equation}\label{eq: bianalyticDirichletFourTwo}
                    \begin{cases}
                        \frac{\p^2 f}{\p(z^*)^2} = 0, & \text{ in } \disk,\\
                        f = 0, & \text{ on } \p\disk.
                    \end{cases}
                \end{equation}
                Choosing two such functions that solve \eqref{eq: bianalyticDirichletFourTwo}, call them $f_1, f_2$, then the constructed function 
                \[
                    w = p^+ (f_1)^* + p^- f_2
                \]
                solves $\dbar^2 w = 0$ and 
                \begin{align*}
                    w|_{\p\disk} &= p^+ (f_1)^*|_{\p\disk} + p^- f_2|_{\p\disk} \\
                    &= p^+ (0)^* + p^- 0 = 0.
                \end{align*}
                Since there are infinitely many pairs of functions $f_1, f_2$ to choose from, it follows that there are infinitely many constructable solutions $w = p^+ (f_1)^* + p^- f_2$ of \eqref{eq: bicomplexdirichleteuation}.

            \end{proof}

            \begin{corr}\label{cor: bicomplexPolyDirichletNotWellPosed}
                Let $n$ be a positive integer. The Dirichlet problem 
                \[
                    \begin{cases}
                    \dbar^n w = 0, & \text{ in } \disk,\\
                    w = 0, & \text{ on } \p\disk,
                    \end{cases}
                \]
                has infinitely many solutions. 
            \end{corr}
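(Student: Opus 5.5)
The plan is to mirror the proof of Proposition \ref{prop: bicomplexbitsadzedirichlet}, with Lemma \ref{lem: bicomplexpolyanalyticidempotentrep} (valid for every positive integer $n$) replacing its $n=2$ use and Proposition \ref{prop: higherordercomplexdirichlet} replacing Proposition \ref{prop: dirichletbianalyticcircle}. The aim is to reduce the bicomplex problem to two copies of the complex polyanalytic Dirichlet problem on $\disk$, via the idempotent representation.

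First, by Lemma \ref{lem: bicomplexpolyanalyticidempotentrep}, a function $w = p^+ w^+ + p^- w^-$ satisfies $\dbar^n w = 0$ if and only if $(w^+)^*$ and $w^-$ are both solutions of $\frac{\p^n f}{\p (z^*)^n} = 0$. The sufficiency direction of that lemma is what I will use. Given two polyanalytic functions $f_1, f_2$ of order $n$, the function
\[
    w := p^+ (f_1)^* + p^- f_2
\]
solves $\dbar^n w = 0$ in $\disk$.

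Second, I invoke Proposition \ref{prop: higherordercomplexdirichlet} exactly as stated, for the given $n$. This supplies an infinite family $\{f_k\}$ of linearly independent complex functions with $\frac{\p^n f_k}{\p (z^*)^n} = 0$ in $\disk$ and $f_k = 0$ on $\p\disk$. For any choice of $f_1, f_2$ from this family, the boundary computation from the proof of Proposition \ref{prop: bicomplexbitsadzedirichlet} applies verbatim:
\[
    w|_{\p\disk} = p^+ (f_1|_{\p\disk})^* + p^- f_2|_{\p\disk} = 0.
\]
Hence each such $w$ solves the stated Dirichlet problem.

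The only point needing care is that different choices really give different solutions, so that the count is infinite. This follows from the uniqueness in Proposition \ref{everybchasplusandminus}, applied pointwise. If $p^+ (f_1)^* + p^- f_2 = p^+ (g_1)^* + p^- g_2$, then $f_1 = g_1$ and $f_2 = g_2$. Therefore the map $(f_1, f_2) \mapsto w$ is injective. In particular, fixing $f_2 = f_1 = f_k$ and letting $k$ vary already produces infinitely many distinct (indeed linearly independent over $\C$) solutions.

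I expect no real obstacle beyond correctly citing Proposition \ref{prop: higherordercomplexdirichlet} for the given $n$ and invoking the uniqueness of the idempotent decomposition.
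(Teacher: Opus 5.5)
Your proposal follows the paper's own proof: the paper reruns the argument of Proposition \ref{prop: bicomplexbitsadzedirichlet}, citing Proposition \ref{prop: higherordercomplexdirichlet} in place of Proposition \ref{prop: dirichletbianalyticcircle}. Your injectivity check via uniqueness of the idempotent decomposition, and the linear independence of $p^+ (f_k)^* + p^- f_k$ read off from its $p^-$ component, are details the paper leaves implicit.

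One caveat applies to both you and the paper: the argument, and the corollary itself, fail for $n=1$. In that case $(w^+)^*$ and $w^-$ are holomorphic with zero continuous boundary values, so they vanish identically by the maximum principle, and the only solution is $w \equiv 0$. Citing Proposition \ref{prop: higherordercomplexdirichlet} ``exactly as stated, for the given $n$'' is therefore legitimate only for $n \geq 2$.
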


            \begin{proof}
                The argument for this result is the same as the one found in the proof of Proposition \ref{prop: bicomplexbitsadzedirichlet} except an appeal to Proposition \ref{prop: higherordercomplexdirichlet} is made instead of Proposition \ref{prop: dirichletbianalyticcircle}.  
            \end{proof}

             \begin{examp}

            The collection of functions 
            \[
                w_j(z) = \sum_{k=1}^{n-1} \widehat{z^*}^{n-1} \widehat{z}^{\,j} (1-(\widehat{z}\widehat{z^*})^{n-k}),
            \]
            for every positive integer $j$, modeled after those found in the proof of Lemma 3.1 in \cite{Karaca20}, provide an example of the infinitely many functions guaranteed to exist by Corollary \ref{cor: bicomplexPolyDirichletNotWellPosed}.
            
            \end{examp}

            Now, we show that solvability of the Dirichlet problem for the $\bc$-Bitsadze equation on the disk is equivalent to solvability of two Dirichlet problems for the Bitsadze equation. Two immediate corollaries follow: one is the higher-order extension of the proposition and the other relates solvability of the Dirichlet problem for the $\bc$-Bitsadze equation with integral conditions. Note that the second corollary is a bicomplex extension of Theorem \ref{thm: complexhigherorderdirichletdisk} and provides a representation formula for solutions. Then, these three results are condensed into the single Theorem \ref{thm: bcdirichletsolvabletfae}.

            \begin{prop}\label{prop: bcdirichletiffcomplexdirichletfornequalstwo}
                Let $f \in L^1(\disk, \bc)$ and $\gamma_0, \gamma_1 \in C(\p\disk, \bc)$. The Dirichlet problem for the $\bc$-Bitsadze equation
                \[
                    \begin{cases}
                        \dbar^2 w = f, & \text{ in } \disk,\\
                        w = \gamma_0, & \text{ on } \p\disk,\\
                        \dbar w = \gamma_1, & \text{ on } \p\disk,
                    \end{cases}
                \]
              is solvable if and only if the Dirichlet problems for the Bitsadze equation
                \[
                    \begin{cases}
                        \frac{\p^2 (w^+)^*}{\p (z^*)^2} = (f^+)^*, & \text{ in } \disk,\\
                        (w^+)^* = (\gamma_0^+)^*, & \text{ on } \p\disk,\\
                        \frac{\p (w^+)^*}{\p z^*} = (\gamma_1^+)^*, & \text{ on } \p\disk,
                    \end{cases}
                \]
                and
                \[
                    \begin{cases}
                         \frac{\p^2 w^-}{\p (z^*)^2} = f^-, & \text{ in } \disk,\\
                        w^- = \gamma_0^-, & \text{ on } \p\disk,\\
                        \frac{\p w^-}{\p z^*} = \gamma_1^-, & \text{ on } \p\disk,
                    \end{cases}
                \]
                are solved by the complex components $w^+$ and $w^-$ of the idempotent representation of $w = p^+ w^+ + p^- w^-$. 
            \end{prop}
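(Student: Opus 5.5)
The plan is to work entirely through the idempotent decomposition, exactly as in the proof of Lemma \ref{lem: bicomplexpolyanalyticidempotentrep}. Write $w = p^+ w^+ + p^- w^-$, $f = p^+ f^+ + p^- f^-$ and $\gamma_k = p^+\gamma_k^+ + p^-\gamma_k^-$. By Proposition \ref{everybchasplusandminus} these decompositions are unique. By Proposition \ref{prop: bcctsiffidemc}, the hypotheses $f\in L^1(\disk,\bc)$ and $\gamma_k\in C(\p\disk,\bc)$ hold if and only if $f^\pm\in L^1(\disk)$ and $\gamma_k^\pm\in C(\p\disk)$. Since complex conjugation preserves integrability and continuity, the data of both complex problems are of the admissible class.

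The key computation is the idempotent form of $\dbar$ from Definition \ref{bcdbardef}. Using $(p^\pm)^2=p^\pm$ and $p^+p^-=0$, I get
\[
\dbar w = p^+ \frac{\p w^+}{\p z} + p^- \frac{\p w^-}{\p z^*} = p^+\left(\frac{\p (w^+)^*}{\p z^*}\right)^* + p^-\frac{\p w^-}{\p z^*},
\]
and iterating once more,
\[
\dbar^2 w = p^+\left(\frac{\p^2 (w^+)^*}{\p (z^*)^2}\right)^* + p^-\frac{\p^2 w^-}{\p (z^*)^2}.
\]
These are the same identities used in the second half of the proof of Lemma \ref{lem: bicomplexpolyanalyticidempotentrep}, now applied with a nonzero right-hand side.

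Next I compare components using uniqueness of the idempotent representation. The equation $\dbar^2 w = f$ holds in $\disk$ if and only if two conditions hold: $\left(\frac{\p^2 (w^+)^*}{\p (z^*)^2}\right)^* = f^+$, which after conjugating is $\frac{\p^2 (w^+)^*}{\p (z^*)^2} = (f^+)^*$; and $\frac{\p^2 w^-}{\p (z^*)^2} = f^-$. On $\p\disk$, the condition $w=\gamma_0$ is equivalent to $(w^+)^*=(\gamma_0^+)^*$ together with $w^-=\gamma_0^-$. Likewise $\dbar w=\gamma_1$ is equivalent to $\frac{\p (w^+)^*}{\p z^*} = (\gamma_1^+)^*$ together with $\frac{\p w^-}{\p z^*}=\gamma_1^-$, by the displayed formula for $\dbar w$. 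Each step is an equivalence, so $w$ solves the bicomplex problem exactly when $(w^+)^*$ solves the first complex problem and $w^-$ solves the second.

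For the converse direction, I start from solutions $u$ of the first complex problem and $v$ of the second, and set $w := p^+ u^* + p^- v$. The same computation, read backwards, shows that $w$ solves the bicomplex problem, and its components are $w^+=u^*$ and $w^-=v$.

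The only point needing care is regularity. The derivatives are interpreted in the same sense, classical or distributional, in both settings, and boundary values are taken as continuous extensions. Proposition \ref{prop: bcctsiffidemc} transfers continuity up to $\p\disk$ between $w$ and $w^\pm$, so the boundary conditions correspond. I expect no real obstacle beyond carefully tracking the conjugation on the $p^+$ component, in particular the identity $\frac{\p g^*}{\p z} = \left(\frac{\p g}{\p z^*}\right)^*$.
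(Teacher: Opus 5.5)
Your proposal is correct and follows essentially the same route as the paper. Both use the idempotent splitting $\dbar = p^+\frac{\p}{\p z} + p^-\frac{\p}{\p z^*}$, match components by uniqueness of the idempotent representation (conjugating the $p^+$ component), and transfer the $L^1$ and continuity hypotheses componentwise via Proposition \ref{prop: bcctsiffidemc}; your explicit converse construction $w := p^+u^* + p^-v$ and the regularity remark are small points the paper leaves implicit.
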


            \begin{proof}
                By Proposition \ref{cor: corollaryof2.24BCAtomic}, $f \in L^1(\disk,\bc)$ if and only if $(f^+)^*, f^- \in L^1(\disk)$, and by Proposition \ref{prop: bcctsiffidemc}, $\gamma_0,\gamma_1 \in C(\p\disk,\bc)$ if and only if $(\gamma_0^+)^*,\gamma_0^-, (\gamma_1)^*, \gamma_1^- \in C(\p\disk)$. Similarly to the conclusion of Lemma \ref{lem: bicomplexpolyanalyticidempotentrep}, since 
                \[
                    \dbar^n = p^+ \frac{\p^n}{\p z^n} + p^- \frac{\p^n}{\p (z^*)^n}
                \]
                as differential operators, for every positive integer $n$, it follows that 
                \begin{align*}
                    \dbar^2 w &= f\\
                    \left(p^+ \frac{\p^2}{\p z^2} + p^- \frac{\p^2}{\p (z^*)^2}\right)(p^+ w^+ + p^- w^-) &= p^+ f^+ + p^- f^-\\
                    p^+ \frac{\p^2 w^+}{\p z^2} + p^- \frac{\p w^-}{\p (z^*)^2} &= p^+ f^+ + p^- f^-.
                \end{align*}
                Associating the components of the above idempotent representations, we have that 
                \[
                    \dbar^2 w = f
                \]
                if and only if
                \[
                    \frac{\p^2 (w^+)^*}{\p (z^*)^2} = (f^+)^*
                \]
                and
                \[
                    \frac{\p^2 w^-}{\p (z^*)^2} = f^-.
                \]
                Since 
                \begin{align*}
                    w|_{\p\disk} &= p^+ w^+|_{\p\disk} + p^- w^-|_{\p\disk} 
                \end{align*}
                and
                \begin{align*}
                    \gamma_0 &= p^+ \gamma_0^+ + p^- \gamma_0^-,
                \end{align*}
                it follows that $w= \gamma_0$ on $\p\disk$ if and only if $(w^+)^*|_{\p\disk} = (\gamma_0^+)^*$ and $w^-|_{\p\disk} = \gamma_0^-$. Similarly, since 
                \[
                    \dbar w = p^+ \frac{\p w^+}{\p z} + p^- \frac{\p w^-}{\p z^*}
                \]
                and 
                \begin{align*}
                    \gamma_1 &= p^+ \gamma_1^+ + p^- \gamma_1^-,
                \end{align*}
                it follows that $\dbar w = \gamma_1$ on $\p\disk$ if and only if $\frac{\p (w^+)^*}{\p z^*}|_{\p\disk} = (\gamma_1^+)^*$ and $\frac{\p w^-}{\p z^*}|_{\p\disk} = \gamma_1^-$. 
            \end{proof}

            \begin{corr}\label{corr: bcdirichletiffcomplexdirichletforanyn}
                 Let $n$ be a positive integer, $f \in L^1(\disk, \bc)$, and $\gamma_k \in C(\p\disk, \bc)$, $0\leq k\leq n-1$. The Dirichlet problem
                \[
                    \begin{cases}
                        \dbar^n w = f, & \text{ in } \disk,\\
                        \dbar^k w = \gamma_k, & \text{ on } \p\disk,
                    \end{cases}
                \]
                for $0 \leq k \leq n-1$, is solvable if and only if the Dirichlet problems
                \[
                    \begin{cases}
                        \frac{\p^n (w^+)^*}{\p (z^*)^n} = (f^+)^*, & \text{ in } \disk,\\
                        \frac{\p^k (w^+)^*}{\p (z^*)^k} = (\gamma_k^+)^*, & \text{ on } \p\disk,
                    \end{cases}
                \]
                for $0 \leq k \leq n-1$, and
                \[
                    \begin{cases}
                         \frac{\p^n w^-}{\p (z^*)^n} = f^-, & \text{ in } \disk,\\
                        \frac{\p^k w^-}{\p (z^*)^k} = \gamma_k^-, & \text{ on } \p\disk,
                    \end{cases}
                \]
                for $0 \leq k \leq n-1$, are solved by the complex components $w^+$ and $w^-$ of the idempotent representation of $w = p^+ w^+ + p^- w^-$. 
            \end{corr}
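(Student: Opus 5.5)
The plan is to repeat the argument of Proposition \ref{prop: bcdirichletiffcomplexdirichletfornequalstwo} with $2$ replaced by a general positive integer $n$. The key identity is the operator factorization
\[
    \dbar^k = p^+ \frac{\p^k}{\p z^k} + p^- \frac{\p^k}{\p (z^*)^k}, \qquad k \geq 0.
\]
I would prove it by induction on $k$. The idempotent relations $(p^\pm)^2 = p^\pm$, $p^+p^- = 0$, $p^+ + p^- = 1$ kill the cross terms when $\dbar^{k} = \dbar\,\dbar^{k-1}$ is expanded via Definition \ref{bcdbardef}. One also needs that $p^\pm$ are constants commuting with $\frac{\p}{\p z}$ and $\frac{\p}{\p z^*}$.

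First I would check the data. By Proposition \ref{prop: bcctsiffidemc}, $f \in L^1(\disk,\bc)$ if and only if $f^\pm \in L^1(\disk)$, equivalently $(f^+)^*, f^- \in L^1(\disk)$. Likewise $\gamma_k \in C(\p\disk,\bc)$ if and only if $(\gamma_k^+)^*, \gamma_k^- \in C(\p\disk)$. So the complex problems have admissible data exactly when the bicomplex one does.

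Next, apply the factorization to $w = p^+ w^+ + p^- w^-$:
\[
    \dbar^k w = p^+ \frac{\p^k w^+}{\p z^k} + p^- \frac{\p^k w^-}{\p (z^*)^k}.
\]
By the uniqueness of the idempotent decomposition (Proposition \ref{everybchasplusandminus}), $\dbar^n w = f$ holds if and only if $\frac{\p^n w^+}{\p z^n} = f^+$ and $\frac{\p^n w^-}{\p (z^*)^n} = f^-$. Taking complex conjugates in the first equation, and using $\left(\frac{\p^n g}{\p z^n}\right)^* = \frac{\p^n g^*}{\p (z^*)^n}$, turns it into $\frac{\p^n (w^+)^*}{\p (z^*)^n} = (f^+)^*$.

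The boundary conditions are handled the same way, one $k$ at a time for $0 \le k \le n-1$. Restriction to $\p\disk$ commutes with taking idempotent components. Hence $\dbar^k w = \gamma_k$ on $\p\disk$ if and only if $\frac{\p^k (w^+)^*}{\p (z^*)^k} = (\gamma_k^+)^*$ and $\frac{\p^k w^-}{\p (z^*)^k} = \gamma_k^-$ on $\p\disk$. Both directions of the equivalence follow at once. Given a bicomplex solution, its components solve the two complex problems. Conversely, given solutions $u$ of the first complex problem and $v$ of the second, $w := p^+ u^* + p^- v$ solves the bicomplex problem.

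The main obstacle is the bookkeeping in the conjugation step. I would write out the conjugation identity for derivatives explicitly, since $(\p_z g)^* = \p_{z^*} g^*$ follows directly from the definitions of the Wirtinger operators. I would also note that regularity of $w$ is equivalent to regularity of $w^\pm$, so that the traces of $\dbar^k w$ on $\p\disk$ make sense in both formulations. Everything else is a direct transcription of the $n=2$ proof.
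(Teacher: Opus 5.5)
Your proposal is correct and follows the paper's route. The paper gives no separate proof of this corollary; it relies on the proof of Proposition~\ref{prop: bcdirichletiffcomplexdirichletfornequalstwo}, which invokes $\dbar^n = p^+ \frac{\p^n}{\p z^n} + p^- \frac{\p^n}{\p (z^*)^n}$ for every $n$ and then matches idempotent components in the equation and in each boundary condition. Your induction proof of that operator identity and your explicit conjugation step $\bigl(\frac{\p^k g}{\p z^k}\bigr)^* = \frac{\p^k g^*}{\p (z^*)^k}$ fill in details the paper leaves implicit.
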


            \begin{corr}
                Let $n$ be a positive integer, $f \in L^1(\disk, \bc)$, and $\gamma_k \in C(\p\disk,\bc)$, for $0 \leq k \leq n-1$. The Dirichlet problem for 
                \begin{equation}\label{eq: eqcor4.7}
                    \begin{cases}
                        \dbar^n w = f, & \text{ in } \disk,\\
                        \dbar^k w = \gamma_k, & \text{ on } \p\disk,
                    \end{cases}
                \end{equation}
                for $0 \leq k \leq n-1$, is solvable if and only if, for $z \in \disk$ and $0 \leq k \leq n-1$, 
                \[
                    \sum_{\ell = k}^{n-1} \frac{z^*}{2\pi i} \int_{|\zeta|=1} (-1)^{\ell - k}\frac{(\gamma_k^+)^*(\zeta)}{1-z^*\zeta} \frac{[(\zeta - z)^*]^{\ell - k}}{(\ell - k)!}\,d\zeta +\frac{(-1)^{n-k}z^*}{\pi} \iint_{|\zeta|<1} \frac{(f^+)^*(\zeta)}{1-z^*\zeta} \frac{[(\zeta-z)^*]^{n-1-k}}{(n-1-k)!}\,d\xi\,d\eta = 0
                \]
                and
                \[
                    \sum_{\ell = k}^{n-1} \frac{z^*}{2\pi i} \int_{|\zeta|=1} (-1)^{\ell - k}\frac{\gamma_k^-(\zeta)}{1-z^*\zeta} \frac{[(\zeta - z)^*]^{\ell - k}}{(\ell - k)!}\,d\zeta +\frac{(-1)^{n-k}z^*}{\pi} \iint_{|\zeta|<1} \frac{f^-(\zeta)}{1-z^*\zeta} \frac{[(\zeta-z)^*]^{n-1-k}}{(n-1-k)!}\,d\xi\,d\eta = 0,
                \]
                where $\zeta = \xi + i\eta$. 
                The solution is 
                \begin{align*}
                    &w(z) \\
                    &= p^+\left( \sum_{k=0}^{n-1} \frac{(-1)^k}{2\pi i}\int_{|\zeta|=1} \frac{(\gamma_k^+)^*(\zeta)}{k!} \frac{[(\zeta - z)^*]^k}{\zeta - z}\,d\zeta + \frac{(-1)^n}{\pi} \iint_{|\zeta|<1} \frac{(f^+)^*(\zeta)}{(n-1)!} \frac{[(\zeta -z)^*]^{n-1}}{\zeta - z}\,d\xi\,d\eta\right)^* \\
                    &\quad\quad+ p^- \left( \sum_{k=0}^{n-1} \frac{(-1)^k}{2\pi i}\int_{|\zeta|=1} \frac{\gamma_k^-(\zeta)}{k!} \frac{[(\zeta - z)^*]^k}{\zeta - z}\,d\zeta + \frac{(-1)^n}{\pi} \iint_{|\zeta|<1} \frac{f^-(\zeta)}{(n-1)!} \frac{[(\zeta -z)^*]^{n-1}}{\zeta - z}\,d\xi\,d\eta\right). 
                \end{align*}
            \end{corr}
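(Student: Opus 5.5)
The plan is to reduce to two complex problems on the disk, exactly as in Corollary \ref{corr: bcdirichletiffcomplexdirichletforanyn}, and then invoke Theorem \ref{thm: complexhigherorderdirichletdisk} on each. By Corollary \ref{corr: bcdirichletiffcomplexdirichletforanyn}, problem \eqref{eq: eqcor4.7} is solvable if and only if two complex polyanalytic Dirichlet problems are solvable. In the first, $(w^+)^*$ satisfies $\frac{\p^n (w^+)^*}{\p (z^*)^n} = (f^+)^*$ in $\disk$ and $\frac{\p^k (w^+)^*}{\p (z^*)^k} = (\gamma_k^+)^*$ on $\p\disk$. In the second, $w^-$ satisfies the analogous problem with data $f^-$ and $\gamma_k^-$.

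Before applying Theorem \ref{thm: complexhigherorderdirichletdisk}, I check that the data have the required regularity. Proposition \ref{cor: corollaryof2.24BCAtomic} gives $f^\pm \in L^1(\disk)$, and hence $(f^+)^* \in L^1(\disk)$, since conjugation preserves the modulus. The same proposition gives $\gamma_k^\pm \in C(\p\disk)$, and hence $(\gamma_k^+)^* \in C(\p\disk)$.

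Next I apply Theorem \ref{thm: complexhigherorderdirichletdisk} to each complex problem separately, with $(f^+)^*,(\gamma_k^+)^*$ in one case and $f^-,\gamma_k^-$ in the other. The theorem states that each problem is solvable precisely when its integral condition holds for all $z\in\disk$ and $0\le k\le n-1$. Substituting the data into the theorem's condition yields the two displayed conditions. In writing this out I will replace the theorem's $\overline{(\cdot)}$ by the complex conjugate $(\cdot)^*$ under the current convention. I will also keep the summation index's data function as the theorem dictates, namely $\gamma_\ell$ inside the sum over $\ell$.

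For the representation, the theorem also supplies the unique solutions: $(w^+)^*$ is given by the first explicit formula, and $w^-$ by the second. Conjugating the first formula gives $w^+$. Assembling $w = p^+ w^+ + p^- w^-$ via Proposition \ref{everybchasplusandminus} then gives the displayed solution, whose $p^+$ component carries the outer ${}^*$. Uniqueness of $w$ follows from the uniqueness of the idempotent components in Proposition \ref{everybchasplusandminus} together with uniqueness in each complex problem.

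The only real obstacle is bookkeeping for the $p^+$ component. The equation $\dbar^n w=f$ gives $\frac{\p^n w^+}{\p z^n} = f^+$, and conjugating this to obtain an equation in $\frac{\p}{\p z^*}$ must be done consistently on the boundary traces. The same conjugation must also be carried through correctly into the final representation, where the outer conjugate on the $p^+$ term comes from. Beyond that, the proof is a direct transcription of the complex results.
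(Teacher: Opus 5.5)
Your proposal is correct and follows the paper's proof. Like the paper, you reduce via Corollary \ref{corr: bcdirichletiffcomplexdirichletforanyn} to the two complex polyanalytic problems for $(w^+)^*$ and $w^-$, apply Theorem \ref{thm: complexhigherorderdirichletdisk} to each to obtain the solvability conditions and explicit solutions, and reassemble $w = p^+ w^+ + p^- w^-$ with the outer conjugate on the $p^+$ component. Writing $\gamma_\ell$ (rather than $\gamma_k$) inside the sum over $\ell$ is the correct transcription of Theorem \ref{thm: complexhigherorderdirichletdisk}; the $\gamma_k$ in the displayed statement, which the paper's proof copies, is a typo.
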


            \begin{proof}

                By Corollary \ref{corr: bcdirichletiffcomplexdirichletforanyn}, the Dirichlet problem 
                \[
                    \begin{cases}
                        \dbar^n w = f, & \text{ in } \disk,\\
                        \dbar^k w = \gamma_k, & \text{ on } \p\disk,
                    \end{cases}
                \]
                for $0 \leq k \leq n-1$, is solvable if and only if the Dirichlet problems
                \[
                    \begin{cases}
                        \frac{\p^n (w^+)^*}{\p (z^*)^n} = (f^+)^*, & \text{ in } \disk,\\
                        \frac{\p^k (w^+)^*}{\p (z^*)^k} = (\gamma_k^+)^*, & \text{ on } \p\disk,
                    \end{cases}
                \]
                for $0 \leq k \leq n-1$, and
                \[
                    \begin{cases}
                         \frac{\p^n w^-}{\p (z^*)^n} = f^-, & \text{ in } \disk,\\
                        \frac{\p^k w^-}{\p (z^*)^k} = \gamma_k^-, & \text{ on } \p\disk,
                    \end{cases}
                \]
                for $0 \leq k \leq n-1$, are solved by the complex components $w^+$ and $w^-$ of the idempotent representation of $w = p^+ w^+ + p^- w^-$. By Theorem \ref{thm: complexhigherorderdirichletdisk}, the Dirichlet problems
                \[
                    \begin{cases}
                        \frac{\p^n (w^+)^*}{\p (z^*)^n} = (f^+)^*, & \text{ in } \disk,\\
                        \frac{\p^k (w^+)^*}{\p (z^*)^k} = (\gamma_k^+)^*, & \text{ on } \p\disk,
                    \end{cases}
                \]
                for $0 \leq k \leq n-1$, and
                \[
                    \begin{cases}
                         \frac{\p^n w^-}{\p (z^*)^n} = f^-, & \text{ in } \disk,\\
                        \frac{\p^k w^-}{\p (z^*)^k} = \gamma_k^-, & \text{ on } \p\disk,
                    \end{cases}
                \]
                for $0 \leq k \leq n-1$, are solved by $(w^+)^*$ and $w^-$ if and only if, for $z \in \disk$ and $0 \leq k \leq n-1$, 
                \[
                    \sum_{\ell = k}^{n-1} \frac{z^*}{2\pi i} \int_{|\zeta|=1} (-1)^{\ell - k}\frac{(\gamma_k^+)^*(\zeta)}{1-z^*\zeta} \frac{[(\zeta - z)^*]^{\ell - k}}{(\ell - k)!}\,d\zeta +\frac{(-1)^{n-k}z^*}{\pi} \iint_{|\zeta|<1} \frac{(f^+)^*(\zeta)}{1-z^*\zeta} \frac{[(\zeta-z)^*]^{n-1-k}}{(n-1-k)!}\,d\xi\,d\eta = 0
                \]
                and
                \[
                    \sum_{\ell = k}^{n-1} \frac{z^*}{2\pi i} \int_{|\zeta|=1} (-1)^{\ell - k}\frac{\gamma_k^-(\zeta)}{1-z^*\zeta} \frac{[(\zeta - z)^*]^{\ell - k}}{(\ell - k)!}\,d\zeta +\frac{(-1)^{n-k}z^*}{\pi} \iint_{|\zeta|<1} \frac{f^-(\zeta)}{1-z^*\zeta} \frac{[(\zeta-z)^*]^{n-1-k}}{(n-1-k)!}\,d\xi\,d\eta = 0,
                \]
                where $\zeta = \xi + i\eta$. Also by Theorem \ref{thm: complexhigherorderdirichletdisk}, the solutions $(w^+)^*$ and $w^-$ are 
                \[
                    (w^+)^*(z) =   \sum_{k=0}^{n-1} \frac{(-1)^k}{2\pi i}\int_{|\zeta|=1} \frac{(\gamma_k^+)^*(\zeta)}{k!} \frac{[(\zeta - z)^*]^k}{\zeta - z}\,d\zeta + \frac{(-1)^n}{\pi} \iint_{|\zeta|<1} \frac{(f^+)^*(\zeta)}{(n-1)!} \frac{[(\zeta -z)^*]^{n-1}}{\zeta - z}\,d\xi\,d\eta\
                \]
                and 
                \[
                    w^-(z) = \sum_{k=0}^{n-1} \frac{(-1)^k}{2\pi i}\int_{|\zeta|=1} \frac{\gamma_k^-(\zeta)}{k!} \frac{[(\zeta - z)^*]^k}{\zeta - z}\,d\zeta + \frac{(-1)^n}{\pi} \iint_{|\zeta|<1} \frac{f^-(\zeta)}{(n-1)!} \frac{[(\zeta -z)^*]^{n-1}}{\zeta - z}\,d\xi\,d\eta.
                \]
                Since $w = p^+ w^- + p^- w^-$, it follows that the solution $w$ of the bicomplex problem \eqref{eq: eqcor4.7} is 
                \begin{align*}
                    &w(z) \\
                    &= p^+\left( \sum_{k=0}^{n-1} \frac{(-1)^k}{2\pi i}\int_{|\zeta|=1} \frac{(\gamma_k^+)^*(\zeta)}{k!} \frac{[(\zeta - z)^*]^k}{\zeta - z}\,d\zeta + \frac{(-1)^n}{\pi} \iint_{|\zeta|<1} \frac{(f^+)^*(\zeta)}{(n-1)!} \frac{[(\zeta -z)^*]^{n-1}}{\zeta - z}\,d\xi\,d\eta\right)^* \\
                    &\quad\quad+ p^- \left( \sum_{k=0}^{n-1} \frac{(-1)^k}{2\pi i}\int_{|\zeta|=1} \frac{\gamma_k^-(\zeta)}{k!} \frac{[(\zeta - z)^*]^k}{\zeta - z}\,d\zeta + \frac{(-1)^n}{\pi} \iint_{|\zeta|<1} \frac{f^-(\zeta)}{(n-1)!} \frac{[(\zeta -z)^*]^{n-1}}{\zeta - z}\,d\xi\,d\eta\right). 
                \end{align*}
                Therefore, the bicomplex Dirichlet problem 
                \[
                    \begin{cases}
                        \dbar^n w = f, & \text{ in } \disk,\\
                        \dbar^k w = \gamma_k, & \text{ on } \p\disk,
                    \end{cases}
                \]
                for $0 \leq k \leq n-1$, is solvable if and only, for $z \in \disk$ and $0 \leq k \leq n-1$, 
                \[
                    \sum_{\ell = k}^{n-1} \frac{z^*}{2\pi i} \int_{|\zeta|=1} (-1)^{\ell - k}\frac{(\gamma_k^+)^*(\zeta)}{1-z^*\zeta} \frac{[(\zeta - z)^*]^{\ell - k}}{(\ell - k)!}\,d\zeta +\frac{(-1)^{n-k}z^*}{\pi} \iint_{|\zeta|<1} \frac{(f^+)^*(\zeta)}{1-z^*\zeta} \frac{[(\zeta-z)^*]^{n-1-k}}{(n-1-k)!}\,d\xi\,d\eta = 0
                \]
                and
                \[
                    \sum_{\ell = k}^{n-1} \frac{z^*}{2\pi i} \int_{|\zeta|=1} (-1)^{\ell - k}\frac{\gamma_k^-(\zeta)}{1-z^*\zeta} \frac{[(\zeta - z)^*]^{\ell - k}}{(\ell - k)!}\,d\zeta +\frac{(-1)^{n-k}z^*}{\pi} \iint_{|\zeta|<1} \frac{f^-(\zeta)}{1-z^*\zeta} \frac{[(\zeta-z)^*]^{n-1-k}}{(n-1-k)!}\,d\xi\,d\eta = 0,
                \]
                where $\zeta = \xi + i\eta$, and the solution is 
                \begin{align*}
                    &w(z) \\
                    &= p^+\left( \sum_{k=0}^{n-1} \frac{(-1)^k}{2\pi i}\int_{|\zeta|=1} \frac{(\gamma_k^+)^*(\zeta)}{k!} \frac{[(\zeta - z)^*]^k}{\zeta - z}\,d\zeta + \frac{(-1)^n}{\pi} \iint_{|\zeta|<1} \frac{(f^+)^*(\zeta)}{(n-1)!} \frac{[(\zeta -z)^*]^{n-1}}{\zeta - z}\,d\xi\,d\eta\right)^* \\
                    &\quad\quad+ p^- \left( \sum_{k=0}^{n-1} \frac{(-1)^k}{2\pi i}\int_{|\zeta|=1} \frac{\gamma_k^-(\zeta)}{k!} \frac{[(\zeta - z)^*]^k}{\zeta - z}\,d\zeta + \frac{(-1)^n}{\pi} \iint_{|\zeta|<1} \frac{f^-(\zeta)}{(n-1)!} \frac{[(\zeta -z)^*]^{n-1}}{\zeta - z}\,d\xi\,d\eta\right). 
                \end{align*}
            \end{proof}

            \begin{theorem}\label{thm: bcdirichletsolvabletfae}
                 Let $n$ be a positive integer, $f \in L^1(\disk, \bc)$, and $\gamma_k \in C(\p\disk, \bc)$. The following three statements are equivalent for a function $w = p^+ w^+ + p^- w^-: \disk \to\bc$. 
                \begin{itemize}
                \item The Dirichlet problem
                \[
                    \begin{cases}
                        \dbar^n w = f, & \text{ in } \disk,\\
                        \dbar^k w = \gamma_k, & \text{ on } \p\disk,
                    \end{cases}
                \]
                for $0 \leq k \leq n-1$, is solvable, and the solution is $w$.
                \item The Dirichlet problems
                \[
                    \begin{cases}
                        \frac{\p^n (w^+)^*}{\p (z^*)^n} = (f^+)^*, & \text{ in } \disk,\\
                        \frac{\p^k (w^+)^*}{\p (z^*)^k} = (\gamma_k^+)^*, & \text{ on } \p\disk,
                    \end{cases}
                \]
                for $0 \leq k \leq n-1$, and
                \[
                    \begin{cases}
                         \frac{\p^n w^-}{\p (z^*)^n} = f^-, & \text{ in } \disk,\\
                        \frac{\p^k w^-}{\p (z^*)^k} = \gamma_k^-, & \text{ on } \p\disk,
                    \end{cases}
                \]
                for $0 \leq k \leq n-1$, are solvable, and the solutions are $(w^+)^*$ and $w^-$, respectively.
                \item For $z \in \disk$ and $0 \leq k \leq n-1$, 
                \[
                    \sum_{\ell = k}^{n-1} \frac{z^*}{2\pi i} \int_{|\zeta|=1} (-1)^{\ell - k}\frac{(\gamma_k^+)^*(\zeta)}{1-z^*\zeta} \frac{[(\zeta - z)^*]^{\ell - k}}{(\ell - k)!}\,d\zeta +\frac{(-1)^{n-k}z^*}{\pi} \iint_{|\zeta|<1} \frac{(f^+)^*(\zeta)}{1-z^*\zeta} \frac{[(\zeta-z)^*]^{n-1-k}}{(n-1-k)!}\,d\xi\,d\eta = 0
                \]
                and
                \[
                    \sum_{\ell = k}^{n-1} \frac{z^*}{2\pi i} \int_{|\zeta|=1} (-1)^{\ell - k}\frac{\gamma_k^-(\zeta)}{1-z^*\zeta} \frac{[(\zeta - z)^*]^{\ell - k}}{(\ell - k)!}\,d\zeta +\frac{(-1)^{n-k}z^*}{\pi} \iint_{|\zeta|<1} \frac{f^-(\zeta)}{1-z^*\zeta} \frac{[(\zeta-z)^*]^{n-1-k}}{(n-1-k)!}\,d\xi\,d\eta = 0,
                \]
                where $\zeta = \xi + i\eta$.
                \end{itemize}
                When it exists, the function $w$ is
                \begin{align*}
                    &w(z) \\
                    &= p^+\left( \sum_{k=0}^{n-1} \frac{(-1)^k}{2\pi i}\int_{|\zeta|=1} \frac{(\gamma_k^+)^*(\zeta)}{k!} \frac{[(\zeta - z)^*]^k}{\zeta - z}\,d\zeta + \frac{(-1)^n}{\pi} \iint_{|\zeta|<1} \frac{(f^+)^*(\zeta)}{(n-1)!} \frac{[(\zeta -z)^*]^{n-1}}{\zeta - z}\,d\xi\,d\eta\right)^* \\
                    &\quad\quad+ p^- \left( \sum_{k=0}^{n-1} \frac{(-1)^k}{2\pi i}\int_{|\zeta|=1} \frac{\gamma_k^-(\zeta)}{k!} \frac{[(\zeta - z)^*]^k}{\zeta - z}\,d\zeta + \frac{(-1)^n}{\pi} \iint_{|\zeta|<1} \frac{f^-(\zeta)}{(n-1)!} \frac{[(\zeta -z)^*]^{n-1}}{\zeta - z}\,d\xi\,d\eta\right). 
                \end{align*}
            \end{theorem}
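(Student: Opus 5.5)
The theorem assembles the two corollaries just proved, so I will establish the equivalences as a cycle of implications: (i) $\Leftrightarrow$ (ii) from Corollary \ref{corr: bcdirichletiffcomplexdirichletforanyn}, and (ii) $\Leftrightarrow$ (iii) together with the solution formula from Theorem \ref{thm: complexhigherorderdirichletdisk}. As in the corollary just before the statement, the regularity hypotheses transfer to the idempotent components. By Proposition \ref{cor: corollaryof2.24BCAtomic}, $f\in L^1(\disk,\bc)$ exactly when $(f^+)^*, f^-\in L^1(\disk)$. By Proposition \ref{prop: bcctsiffidemc}, $\gamma_k\in C(\p\disk,\bc)$ exactly when $(\gamma_k^+)^*,\gamma_k^-\in C(\p\disk)$. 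Complex conjugation preserves both integrability and continuity, so the complex theorems apply to each component problem.

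For (i) $\Leftrightarrow$ (ii), I will use the operator identity
\[
\dbar^k = p^+\frac{\p^k}{\p z^k}+p^-\frac{\p^k}{\p (z^*)^k},
\]
which follows from $(p^\pm)^2=p^\pm$ and $p^+p^-=0$. Together with the identity $\left(\frac{\p^k w^+}{\p z^k}\right)^* = \frac{\p^k (w^+)^*}{\p (z^*)^k}$, this splits every equation in (i) into its $p^+$ and $p^-$ parts. Proposition \ref{everybchasplusandminus} says idempotent components are unique, so the bicomplex identities $\dbar^n w=f$ in $\disk$ and $\dbar^k w=\gamma_k$ on $\p\disk$ hold if and only if the corresponding component identities hold. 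After conjugating the $+$ component, these are precisely the two complex problems in (ii), solved by $(w^+)^*$ and $w^-$. This is the content of Corollary \ref{corr: bcdirichletiffcomplexdirichletforanyn}, and the argument applies to the specific function $w$ in both directions, not merely to solvability.

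For (ii) $\Leftrightarrow$ (iii), I will apply Theorem \ref{thm: complexhigherorderdirichletdisk} twice. The first application takes data $((f^+)^*,(\gamma_k^+)^*)$ and the second takes data $(f^-,\gamma_k^-)$. Each complex problem is uniquely solvable precisely when its integral condition vanishes for all $z\in\disk$ and $0\le k\le n-1$, and these two vanishing conditions are the two displayed conditions in (iii). The same theorem gives explicit formulas for $(w^+)^*$ and $w^-$. Conjugating the first formula and recombining as $w=p^+w^++p^-w^-$ yields the stated expression for $w$. Uniqueness in the complex theorem together with uniqueness of the idempotent representation then makes this $w$ the unique solution of (i).

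The main obstacle is only bookkeeping. The conjugation in the $+$ component has to be tracked consistently, with $\p/\p z$ acting on $w^+$ becoming $\p/\p z^*$ acting on $(w^+)^*$, and the indices in the integral conditions must match those of Theorem \ref{thm: complexhigherorderdirichletdisk}. That theorem sums over $\lambda$ with the data $\gamma_\lambda$, so the inner data in (iii) should be read as $\gamma_\ell$. I will check this matching against the complex statement and otherwise quote it verbatim.
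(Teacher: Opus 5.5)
Your proposal is correct and follows the paper's route: the theorem simply condenses Corollary \ref{corr: bcdirichletiffcomplexdirichletforanyn} (splitting via $\dbar^k = p^+\frac{\p^k}{\p z^k} + p^-\frac{\p^k}{\p (z^*)^k}$ and uniqueness of idempotent components) with the subsequent corollary, which applies Theorem \ref{thm: complexhigherorderdirichletdisk} to $(w^+)^*$ and $w^-$ and recombines them, exactly as you plan. Reading the boundary data in (iii) as $\gamma_\ell$ rather than $\gamma_k$ is the right correction, and since (iii) does not involve $w$, the implication from (iii) must be read, as your uniqueness remark implicitly does, with $w$ taken to be the function given by the closing formula.
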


        \subsubsection{Other Non-degenerate Conics}

            Next, we show the conclusion of Theorem  \ref{thm: dirichletbianalyticwellposed} also holds in the setting of the $\bc$-Bitsadze equation. That is, despite not being well-defined when the boundary condition is on the unit circle, the Dirichlet problem for the $\bc$-Bitsadze equation is well-posed when the boundary condition is a nondegenerate conic that is not a circumference. We prove this by appealing to the structure of solutions to the $\bc$-Bitsadze equation (which is similar to that of its complex counterpart), the idempotent representation for bicomplex numbers, and Theorem \ref{thm: dirichletbianalyticwellposed}. 

            \begin{theorem}\label{thm: generalizationOfBianalyticWellPosed}
             Let $Q(x,y) = 0$ be a non-degenerate conic that is not a circumference, $\Gamma = \{(x,y) \in \mathbb{R}^2 : Q(x,y) = 0\}$, and $P$ be a polynomial in $x$ and $y$. The Dirichlet problem for the $\bc$-Bitsadze equation
                \begin{equation}\label{eq: bicomplexdirichletthm}
                    \begin{cases}
                        \dbar^2 w  = 0, & \text{ in } \mathbb{R}^2\setminus \Gamma,\\
                        w = P, & \text{ on } \Gamma,
                    \end{cases}
                \end{equation}
                has a unique solution.
                \end{theorem}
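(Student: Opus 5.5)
The plan is to reduce the bicomplex problem \eqref{eq: bicomplexdirichletthm} to two complex bianalytic Dirichlet problems using the idempotent representation. Then Theorem \ref{thm: dirichletbianalyticwellposed} supplies existence and uniqueness for each component.

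First, I will interpret the data. The polynomial $P$ in $x,y$ is presumably allowed bicomplex coefficients, with real coefficients a special case. I will write $P = p^+ P^+ + p^- P^-$. Since $p^\pm$ are constants and $x,y$ are real, $P^+$ and $P^-$ are complex polynomials in $x$ and $y$; explicitly, $P^\pm = \sca P \mp i \vect P$ by Proposition \ref{everybchasplusandminus}, applied coefficientwise. The conjugate $(P^+)^*$ is again a polynomial in $x,y$, with conjugated coefficients.

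Second, I will apply Lemma \ref{lem: bicomplexpolyanalyticidempotentrep} with $n=2$. Its proof is purely differential, via $\dbar^2 = p^+\frac{\p^2}{\p z^2}+p^-\frac{\p^2}{\p (z^*)^2}$, so it holds on any open set, including $\mathbb{R}^2\setminus\Gamma$. It gives that $w = p^+w^+ + p^-w^-$ solves $\dbar^2 w = 0$ if and only if both $(w^+)^*$ and $w^-$ are bianalytic. Uniqueness of the idempotent decomposition (Proposition \ref{everybchasplusandminus}) shows that $w = P$ on $\Gamma$ if and only if $(w^+)^* = (P^+)^*$ and $w^- = P^-$ on $\Gamma$. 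So \eqref{eq: bicomplexdirichletthm} is equivalent to the pair of complex problems
\[
\begin{cases} \frac{\p^2 u}{\p (z^*)^2} = 0, & \text{ in } \mathbb{R}^2\setminus\Gamma,\\ u = (P^+)^*, & \text{ on } \Gamma,\end{cases}
\qquad
\begin{cases} \frac{\p^2 v}{\p (z^*)^2} = 0, & \text{ in } \mathbb{R}^2\setminus\Gamma,\\ v = P^-, & \text{ on } \Gamma,\end{cases}
\]
with $u = (w^+)^*$ and $v = w^-$.

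Third, Theorem \ref{thm: dirichletbianalyticwellposed} gives unique solutions $u$ and $v$. Setting $w := p^+ u^* + p^- v$ produces a solution, by the converse direction of the lemma and the boundary computation above. For uniqueness, suppose $\tilde w$ is another solution. Its components $(\tilde w^+)^*$ and $\tilde w^-$ solve the same two complex problems, so by the uniqueness in Theorem \ref{thm: dirichletbianalyticwellposed} they equal $u$ and $v$. Hence $\tilde w = w$.

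The main obstacle is bookkeeping rather than analysis. The delicate point is making sure the $p^+$ component genuinely lands in the class covered by Theorem \ref{thm: dirichletbianalyticwellposed}. The $p^+$ part involves $\frac{\p}{\p z}$, so one must conjugate: $(w^+)^*$ is bianalytic and its boundary datum $(P^+)^*$ is still a polynomial. One must also check that Lemma \ref{lem: bicomplexpolyanalyticidempotentrep}, stated on $\disk$, transfers to the possibly unbounded domain $\mathbb{R}^2\setminus\Gamma$. I will handle this by using only the operator identity from its second half, together with the fact that $\frac{\p^2 (w^+)}{\p z^2} = \bigl(\frac{\p^2 (w^+)^*}{\p (z^*)^2}\bigr)^*$, rather than the representation formula.
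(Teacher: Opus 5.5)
Your proposal is correct and follows essentially the same route as the paper. You split $w$ and $P$ idempotently, use Lemma \ref{lem: bicomplexpolyanalyticidempotentrep} to reduce to two complex bianalytic Dirichlet problems with data $(P^+)^*$ and $P^-$, apply Theorem \ref{thm: dirichletbianalyticwellposed} to each, reassemble $w = p^+u^* + p^-v$, and obtain uniqueness from the uniqueness of the components and of the idempotent representation. Your plan to justify the lemma on $\mathbb{R}^2\setminus\Gamma$ via the operator identity $\dbar^2 = p^+\frac{\p^2}{\p z^2} + p^-\frac{\p^2}{\p (z^*)^2}$, rather than the representation formula on $\disk$ used in the lemma's proof, also tightens a point the paper passes over without comment.
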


                \begin{proof}

                By Lemma \ref{lem: bicomplexpolyanalyticidempotentrep}, every solution $w = p^+ w^+ + p^- w^-$ of $\dbar^2 w = 0$ satisfies 
                \[
                    \frac{\p^2 (w^+)^*}{\p (z^*)^2} = 0 = \frac{\p^2 w^-}{\p (z^*)^2}.
                \]
                By Theorem \ref{thm: dirichletbianalyticwellposed}, there exist unique functions $f, g$ that solve the Dirichlet problems for the Bitsadze equation
                \[
                    \begin{cases}
                        \frac{\p^2 f}{\p(z^*)^2}  = 0, & \text{ in } \mathbb{R}^2\setminus \Gamma,\\
                        f = (P^+)^*, & \text{ on } \Gamma,
                    \end{cases}
                \]
                and 
               \[
                    \begin{cases}
                        \frac{\p^2 g}{\p(z^*)^2}  = 0, & \text{ in } \mathbb{R}^2\setminus \Gamma,\\
                        g = P^-, & \text{ on } \Gamma,
                    \end{cases}
                \]
                respectively, where $P^+$ and $P^-$ are the components of the idempotent representation of the polynomial $P = p^+ P^+ + p^- P^-$. If we define $w:= p^+(f)^* + p^- g$, then $w$ solves $\dbar^2 w = 0$ and 
                \begin{align*}
                    w|_{\Gamma} &= p^+ (f)^*|_{\Gamma} + p^- g|_{\Gamma} \\
                    &= p^+ ((P^+)^*)^* + p^- P^-\\
                    &= p^+ P^+ + p^- P^- = P.
                \end{align*}
                By the uniqueness of $f$ and $g$ and the uniqueness of the idempotent representation of $w$, it follows that the constructed solution $w$ of \eqref{eq: bicomplexdirichletthm} is unique.

            \end{proof}

            \begin{comm}
                Note that the argument used in the proof of Theorem \ref{thm: generalizationOfBianalyticWellPosed} will immediately extend to bicomplex Dirichlet problems associated with solutions of $\dbar^n w =0$, for arbitrary positive integer $n$, when an analogous result to Theorem \ref{thm: dirichletbianalyticwellposed} for higher-order complex equations is available. 
            \end{comm}

    \subsection{The Dirichlet Problem for the Bicomplex Vekua-Bitsadze Equation}

        With the results for the Dirichlet problem for the Bitsadze equation extended to the Dirichlet problem for the $\bc$-Bitsadze equation, we work to show that the results from Section \ref{section: bcdirifchletbitsadzeequation} also hold for the $\bc$-Vekua-Bitsadze equation.

        \subsubsection{The Unit Circle}

        We begin with a lemma that extends Lemma \ref{lem: bicomplexpolyanalyticidempotentrep} to the bicomplex higher-order iterated Vekua equations. After the lemma, we extend Proposition \ref{prop: bicomplexbitsadzedirichlet} and its corollary to bicomplex higher-order Vekua equations.

            \begin{lem}\label{lem: bicomplexhoividempotentrep}
                Let $n$ be a positive integer and $A \in W^{n-1, q}(\disk,\bc)$, $q>2$. A function $w = p^+ w^+ + p^- w^-: \disk \to \bc$ is a solution to
                \[
                    (\dbar- A)^n w = 0
                \]
                if and only if $(w^+)^*$ and $w^-$ solve 
                \[
                   \left( \frac{\p }{\p z^*}- (A^+)^*\right)^n (w^+)^* = 0
                \]
                and 
                \[
                    \left( \frac{\p}{\p z^*} - A^-\right)^n w^- = 0,
                \]
                respectively. 
            \end{lem}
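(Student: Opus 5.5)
Since $B\equiv 0$ here, I would not use the representation theorems at all. Instead I would work directly at the level of operators in the idempotent decomposition, as in the proof of Proposition \ref{prop: bcdirichletiffcomplexdirichletfornequalstwo}. The point is that the operator $\dbar - A$ acts diagonally on $p^\pm$-components.

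First I write $A = p^+A^+ + p^-A^-$ and use Definition \ref{bcdbardef}, $\dbar = p^+\frac{\p}{\p z} + p^-\frac{\p}{\p z^*}$. Since $p^+p^-=0$ and $(p^\pm)^2=p^\pm$, for $w=p^+w^++p^-w^-$ this gives
\[
(\dbar - A)w = p^+\left(\frac{\p w^+}{\p z} - A^+w^+\right) + p^-\left(\frac{\p w^-}{\p z^*} - A^-w^-\right).
\]
By induction on $n$, the $p^+$-component of $(\dbar-A)^n w$ is $\left(\frac{\p}{\p z}-A^+\right)^n w^+$ and the $p^-$-component is $\left(\frac{\p}{\p z^*}-A^-\right)^n w^-$. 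The inductive step is the same one-line computation applied to $(\dbar-A)^{n-1}w$, which again has an idempotent decomposition by Proposition \ref{everybchasplusandminus}.

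Next I use the uniqueness of the idempotent representation (Proposition \ref{everybchasplusandminus}). This gives $(\dbar-A)^nw=0$ if and only if $\left(\frac{\p}{\p z}-A^+\right)^n w^+=0$ and $\left(\frac{\p}{\p z^*}-A^-\right)^n w^-=0$. To finish, I take complex conjugates in the $+$ equation. Complex conjugation satisfies $\left(\frac{\p u}{\p z}\right)^* = \frac{\p u^*}{\p z^*}$ and $(A^+u)^* = (A^+)^*u^*$, so $\left[\left(\frac{\p}{\p z}-A^+\right)u\right]^* = \left(\frac{\p}{\p z^*}-(A^+)^*\right)u^*$. Iterating this $n$ times gives $\left[\left(\frac{\p}{\p z}-A^+\right)^n w^+\right]^* = \left(\frac{\p}{\p z^*}-(A^+)^*\right)^n (w^+)^*$, and a function vanishes if and only if its conjugate does. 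Both directions of the equivalence follow at once.

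The only point requiring care is regularity. The hypothesis $A\in W^{n-1,q}(\disk,\bc)$ must transfer to $A^\pm$, and hence to $(A^+)^*$, lying in $W^{n-1,q}(\disk)$, so that the iterated complex operators make sense in the same setting as in Subsection \ref{subsubsection: hoiv equations}. I would handle this with Proposition \ref{cor: corollaryof2.24BCAtomic} applied to $A$ and its derivatives up to order $n-1$. This works because differentiation in $x,y$ commutes with taking idempotent components, as $A^\pm=\sca A\mp i\vect A$. Beyond this bookkeeping I expect no real obstacle; the argument is a direct computation.
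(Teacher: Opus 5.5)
Your argument is correct, but for the forward implication it takes a different route from the paper.

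The paper proves ``$\Rightarrow$'' via representation formulas. It writes $w=\sum_{k=0}^{n-1}\widehat{z^*}^{\,k}\varphi_k$ by Theorem \ref{thm: 6point3BCHoiv} and splits each $\varphi_k$ with Theorem \ref{thm: 4.9BCHoiv} (with $B\equiv 0$). It then uses $\widehat{z^*}=p^+z+p^-z^*$ to get $w^+=\sum z^k\varphi_k^+$ and $w^-=\sum (z^*)^k\varphi_k^-$, and concludes with Corollary \ref{cor: altbzerohoivrep}. Only for ``$\Leftarrow$'' does it use the operator identity $(\dbar-A)^n=p^+(\frac{\p}{\p z}-A^+)^n+p^-(\frac{\p}{\p z^*}-A^-)^n$ followed by conjugation of the $p^+$-part. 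That is exactly your computation. You observe that this identity, together with uniqueness of the idempotent representation, already gives both directions at once.

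Your route has several advantages. It is self-contained and symmetric, and it avoids the representation theorems of \cite{BCHoiv} and \cite{WBD}. It is not tied to $\disk$, which matters because the lemma is later used on the region $D$ in Theorem \ref{thm: bcmainthm}. It also extends verbatim to the inhomogeneous equation $(\dbar-A)^nw=f$.

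It also sidesteps a small point in the paper's forward step. That step actually needs the converse of Corollary \ref{cor: altbzerohoivrep} as stated: every $\sum(z^*)^k\varphi_k$ with $\frac{\p\varphi_k}{\p z^*}=A\varphi_k$ is a solution. This is true, e.g.\ since $(\frac{\p}{\p z^*}-A)((z^*)^k\varphi_k)=k(z^*)^{k-1}\varphi_k$, but it is not what the corollary states.

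What the paper's route buys in exchange is the explicit structure of $w^\pm$ in terms of the $\varphi_k^\pm$. This links the lemma to the representation formulas used elsewhere, such as the alternative proof of Theorem \ref{thm: bcmainthm}.
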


            \begin{proof}
                Suppose $w: \disk\to\bc$ solves
                \[
                    (\dbar- A)^n w = 0.
                \]
                By Theorem \ref{thm: 6point3BCHoiv}, 
                \[
                    w(z) = \sum_{k=0}^{n-1} \widehat{z^*}^k \varphi_k(z),
                \]
                where $\varphi_k$ solves
                \[
                    \dbar \varphi_k = A \varphi_k,
                \]
                for every $k$. By Theorem \ref{thm: 4.9BCHoiv}, $(\varphi_k^+)^*, \varphi_k^-$ solve
                \[
                 \frac{\p (\varphi_k^+)^*}{\p z^*} = (A^+)^* (\varphi_k^+)^*
                \]
                and 
                \[
                    \frac{\p \varphi_k^-}{\p z^*} = A^-\varphi_k^-,
                \]
                respectively, for every $k$. Since $\widehat{z^*} = p^+ z + p^- z^*$, it follows that 
                \begin{align*}
                     w(z) &= \sum_{k=0}^{n-1} \widehat{z^*}^k \varphi_k(z)\\
                     &= \sum_{k=0}^{n-1} (p^+ z + p^- z^*)^k (p^+ \varphi_k^+ + p^- \varphi_k^-) \\
                     &= \sum_{k=0}^{n-1} (p^+ z^k + p^- (z^*)^k) (p^+ \varphi_k^+ + p^- ]\varphi_k^-) \\
                     &= \sum_{k=0}^{n-1} \left(p^+ z^k \varphi_k^+ + p^- (z^*)^k \varphi_k^-\right) \\
                     &= p^+ \sum_{k=0}^{n-1} z^k \varphi_k^+ + p^- \sum_{k=0}^{n-1} (z^*)^k \varphi_k^-.
                \end{align*}
                Thus, 
                \[
                    w^+ = \sum_{k=0}^{n-1} z^k \varphi_k^+ ,
                \]
                and
                \[
                    w^- = \sum_{k=0}^{n-1} (z^*)^k \varphi_k^-.
                \]
                By Corollary \ref{cor: altbzerohoivrep}, $(w^+)^*, w^-$ are solutions of the higher-order iterated Vekua equations
                \[
                    \left( \frac{\p }{\p z^*} - (A^+)^*\right)^n (w^+)^* = 0
                \]
                and 
                \[
                \left( \frac{\p}{\p z^*} - A^-\right)^n w^-= 0,
                \]
                respectively.  

                Next, suppose $f, g : \disk \to \mathbb{C}$ solve the higher-order iterated Vekua equations
                \[
                    \left( \frac{\p }{\p z^*} - (A^+)^*\right)^n f = 0
                \]
                and 
                \[
                \left( \frac{\p}{\p z^*} - A^-\right)^n g= 0,
                \]
                respectively. Construct the function $w: \disk \to \bc$ by 
                \[
                    w(z) := p^+ (f(z))^* + p^- g(z).
                \]
                Observe that 
                \begin{align*}
                    (\dbar - A)^n w &= \left( p^+ \frac{\p}{\p z} + p^- \frac{\p}{\p z^*}- (p^+ A^+ + p^- A^-)\right)^n (p^+ (f)^* + p^- g)\\
                    &= \left( p^+ \left(\frac{\p}{\p z} - A^+\right)+ p^- \left(\frac{\p}{\p z^*} - A^-\right)\right)^n (p^+ (f)^* + p^- g)\\
                    &= \left( p^+  \left(\frac{\p}{\p z} - A^+\right)^n + p^- \left(\frac{\p}{\p z^*} - A^-\right)^n \right)(p^+ (f)^* + p^- g)\\
                    &= p^+ \left(\frac{\p}{\p z} - A^+\right)^n (f)^* + p^- \left(\frac{\p}{\p z^*} - A^-\right)^n g \\
                    &= p^+ \left[\left(\frac{\p}{\p z^*} - (A^+)^*\right)^n f \right]^* + p^- \left(\frac{\p}{\p z^*} - A^-\right)^n g \\
                    &= p^+ (0) + p^- ( 0) = 0. 
                \end{align*}
                Therefore, $w$ solves 
                \[
                    (\dbar-A)^n w = 0.
                \]
            \end{proof}

            \begin{theorem}\label{thm: bicomplexvekuabitsadzedirichletnotwelldefined}
            Let $A \in W^{1,q}(\disk,\bc)$, $q>2$. The Dirichlet problem 
            \begin{equation}\label{eq: bicomplexvekbitsadzedirchilet}
            \begin{cases}
                \left( \dbar - A \right)^2 w = 0, & \text{ in } \disk,\\
                w = 0, & \text{ on } \p\disk,
            \end{cases}
            \end{equation}
            has infinitely many solutions. 
            \end{theorem}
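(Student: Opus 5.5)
The plan follows the proof of Proposition \ref{prop: bicomplexbitsadzedirichlet}, with Lemma \ref{lem: bicomplexhoividempotentrep} taking the place of Lemma \ref{lem: bicomplexpolyanalyticidempotentrep} and Proposition \ref{lem: generalbitsadzelindep} taking the place of Proposition \ref{prop: dirichletbianalyticcircle}.

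First, write $A = p^+ A^+ + p^- A^-$. By Proposition \ref{cor: corollaryof2.24BCAtomic}, applied to $A$ and its first derivatives, $A \in W^{1,q}(\disk,\bc)$ gives $A^\pm \in W^{1,q}(\disk)$. Hence $(A^+)^* \in W^{1,q}(\disk)$ as well, since complex conjugation preserves Sobolev regularity. So both complex coefficients $(A^+)^*$ and $A^-$ satisfy the hypotheses of Proposition \ref{lem: generalbitsadzelindep}.

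Next, apply Proposition \ref{lem: generalbitsadzelindep} twice to obtain two infinite families of solutions on $\disk$ that vanish on $\p\disk$:

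\begin{itemize}
\item $\{f_k\}$, linearly independent, solving $\left(\frac{\p}{\p z^*} - (A^+)^*\right)^2 f_k = 0$;
\item $\{g_k\}$, linearly independent, solving $\left(\frac{\p}{\p z^*} - A^-\right)^2 g_k = 0$.
\end{itemize}

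For each $k$, define
\[
w_k := p^+ (f_k)^* + p^- g_k.
\]
The converse direction of Lemma \ref{lem: bicomplexhoividempotentrep} with $n=2$ gives $(\dbar - A)^2 w_k = 0$. On $\p\disk$ we have $w_k = p^+ (0)^* + p^- 0 = 0$. So each $w_k$ solves \eqref{eq: bicomplexvekbitsadzedirchilet}.

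The only point needing care is that these solutions are genuinely infinitely many and distinct. This follows from the uniqueness of the idempotent representation in Proposition \ref{everybchasplusandminus}. If $w_k = w_m$, then $g_k = g_m$, which contradicts the linear independence of $\{g_k\}$ when $k \neq m$. In the same way, linear independence of $\{g_k\}$ over $\C$ carries over to linear independence of $\{w_k\}$: the $p^-$-component of a vanishing linear combination must itself vanish.

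If one wants more, one can even take $f_k \equiv 0$ and vary only the $g_k$. I expect no substantive obstacle beyond this regularity and independence bookkeeping. The analytic content is entirely contained in Proposition \ref{lem: generalbitsadzelindep}.
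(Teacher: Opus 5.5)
Your proposal is correct and follows essentially the same route as the paper: reduce via Lemma \ref{lem: bicomplexhoividempotentrep} and Proposition \ref{cor: corollaryof2.24BCAtomic} to two complex Vekua--Bitsadze Dirichlet problems, apply Proposition \ref{lem: generalbitsadzelindep} to each, and recombine as $w = p^+ (f)^* + p^- g$. Your use of the uniqueness of the idempotent representation to show that the $w_k$ are distinct, and indeed $\C$-linearly independent, makes precise a step the paper leaves implicit when it says that infinitely many pairs $(f,g)$ give infinitely many solutions.
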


            \begin{proof}

                 By Lemma \ref{lem: bicomplexhoividempotentrep}, $w = p^+ w^+ + p^-w^-$ is a solution of $(\dbar - A)^2 w = 0$ if and only if $(w^+)^*$ and $w^-$ solve
                 \[
                        \left( \frac{\p }{\p z^*} - (A^+)^*\right)^2 (w^+)^* = 0
                 \]
                 and 
                 \[ 
                        \left( \frac{\p}{\p z^*} - A^-\right)^2 w^- = 0,
                 \]
                 respectively. Since $A \in W^{1,q}(\disk, \bc)$, $q>2$, it follows that $(A^+)^*, A^- \in W^{1, q}(\disk)$ by Proposition \ref{cor: corollaryof2.24BCAtomic}. So, by Proposition \ref{lem: generalbitsadzelindep}, there exist infinitely many solutions $f$ and $g$ to the Dirichlet problems 
                 \begin{equation}\label{eq: hoivinthm1}
                    \begin{cases}
                        \left( \frac{\p }{\p z^*} - (A^+)^*\right)^2 f = 0, & \text{ in } \disk, \\
                        f = 0, & \text{ on }\p\disk,
                    \end{cases}
                 \end{equation}
                 and
                 \begin{equation}\label{eq: hoivinthm2}
                    \begin{cases}
                        \left( \frac{\p}{\p z^*} - A^-\right)^2 g = 0,& \text{ in }\disk,\\
                        g = 0, & \text{ on } \p\disk,
                    \end{cases}
                 \end{equation}
                 respectively. Define a function $w:= p^+ (f^+)^* + p^- g$, where $f$ and $g$ are any two functions that solve \eqref{eq: hoivinthm1} and \eqref{eq: hoivinthm2}, respectively. Observe that 
                 \begin{align*}
                    (\dbar - A)^2 w 
                    &=\left( p^+ \frac{\p}{\p z} + p^- \frac{\p}{\p z^*}- (p^+ A^+ + p^- A^-)\right )^2 (p^+ (f)^* + p^- g) \\
                    &= \left( p^+ \left(\frac{\p}{\p z}- A^+ \right) + p^- \left(\frac{\p}{\p z^*}-  A^-\right)\right )^2 (p^+ (f)^* + p^- g) \\
                    &=  \left(p^+ \left(\frac{\p}{\p z}- A^+ \right)^2 + p^- \left(\frac{\p}{\p z^*}-  A^-\right)^2\right) (p^+ (f)^* + p^- g) \\
                    &= p^+ \left(\frac{\p}{\p z}- A^+ \right)^2 (f)^* + p^-  \left(\frac{\p}{\p z^*}-  A^-\right)^2 g \\
                    &= p^+\left(  \left(\frac{\p}{\p z^*}- (A^+)^* \right)^2 f \right)^*+ p^-  \left(\frac{\p}{\p z^*}-  A^-\right)^2 g =0
                 \end{align*}
                 and 
                 \begin{align*}
                    w|_{\p \disk} &= p^+ (f)^*|_{\p\disk} + p^- g|_{\p\disk} \\
                    &= p^+(0) + p^- (0) = 0.
                 \end{align*}
                 Thus, $w = p^+ (f)^* + p^- g$ solves \eqref{eq: bicomplexvekbitsadzedirchilet}. Since there are infinitely many pairs of functions $f$ and $g$ that solve \eqref{eq: hoivinthm1} and \eqref{eq: hoivinthm2}, respectively, and each pair produces a function $w = p^+ (f)^* + p^- g$ that solves \eqref{eq: bicomplexvekbitsadzedirchilet}, it follows that there are infinitely many solutions of \eqref{eq: bicomplexvekbitsadzedirchilet}. 
            
            \end{proof}

             \begin{corr}
            Let $n$ be a positive integer and $A \in W^{n-1,q}(\disk,\bc)$, $q>2$. The Dirichlet problem 
            \[
            \begin{cases}
                \left( \dbar - A \right)^n w = 0, & \text{ in } \disk,\\
                w = 0, & \text{ on } \p\disk,
            \end{cases}
            \]
            has infinitely many solutions. 
            \end{corr}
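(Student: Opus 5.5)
The plan is to mirror the proof of Theorem \ref{thm: bicomplexvekuabitsadzedirichletnotwelldefined}, replacing the second-order input with its arbitrary-order analogue.

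First, by Lemma \ref{lem: bicomplexhoividempotentrep} (which is stated for arbitrary $n$), a function $w = p^+ w^+ + p^- w^-$ solves $(\dbar - A)^n w = 0$ if and only if $(w^+)^*$ solves $\left(\frac{\p}{\p z^*} - (A^+)^*\right)^n (w^+)^* = 0$ and $w^-$ solves $\left(\frac{\p}{\p z^*} - A^-\right)^n w^- = 0$. Next, Proposition \ref{cor: corollaryof2.24BCAtomic}, applied to $A$ and to each of its derivatives up to order $n-1$, gives $(A^+)^*, A^- \in W^{n-1,q}(\disk)$. These are exactly the hypotheses needed for Proposition \ref{prop: generalbitsadzelindepextension}. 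That proposition then supplies infinitely many linearly independent complex solutions $f$ of the homogeneous order-$n$ Dirichlet problem with coefficient $(A^+)^*$, and infinitely many solutions $g$ of the one with coefficient $A^-$. In each case the solution vanishes on $\p\disk$.

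Second, for any such pair I will set $w := p^+ f^* + p^- g$. The computation already carried out in the converse half of Lemma \ref{lem: bicomplexhoividempotentrep} shows
\[
(\dbar - A)^n w = p^+\left[\left(\tfrac{\p}{\p z^*} - (A^+)^*\right)^n f\right]^* + p^-\left(\tfrac{\p}{\p z^*} - A^-\right)^n g = 0 .
\]
On the boundary, $w|_{\p\disk} = p^+ (0)^* + p^- 0 = 0$.

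Finally, I must check that distinct choices give distinct $w$. This follows from the uniqueness of the idempotent representation (Proposition \ref{everybchasplusandminus}). For instance, fix $g = 0$ and let $f$ range over the infinitely many linearly independent solutions; the resulting $w$ are pairwise distinct and indeed linearly independent over $\C$. This gives infinitely many solutions.

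There is no real obstacle here. The only point requiring care is the regularity transfer $A \in W^{n-1,q}(\disk,\bc) \Rightarrow (A^+)^*, A^- \in W^{n-1,q}(\disk)$. This uses the fact that differentiation commutes with taking idempotent components, since $p^\pm$ are constants, together with the $L^q$ equivalence in Proposition \ref{cor: corollaryof2.24BCAtomic}.
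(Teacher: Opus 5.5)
Your proposal is essentially the paper's proof. The paper just says to rerun the argument of Theorem \ref{thm: bicomplexvekuabitsadzedirichletnotwelldefined} with Proposition \ref{prop: generalbitsadzelindepextension} in place of Proposition \ref{lem: generalbitsadzelindep}, which is what you do. You also spell out two points the paper leaves implicit: the transfer of $W^{n-1,q}$ regularity to $(A^+)^*$ and $A^-$, and the distinctness of the resulting $w$.

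One caveat applies, but it comes from the statement and the cited proposition, not from anything you introduced. Your remark that there is no real obstacle is false at $n=1$. There Lemma \ref{lem: bicomplexhoividempotentrep} reduces the problem to first-order equations $\frac{\p u}{\p z^*} = a u$, with $a=(A^+)^*$ or $a=A^-$. By Theorem \ref{simprin}, $u = e^{T_\disk[a]}h$ with $h$ holomorphic. Since $e^{-T_\disk[a]}$ is continuous and nonvanishing on $\overline{\disk}$, $h$ is continuous up to $\p\disk$ and vanishes there, so $h\equiv 0$. Hence for $n=1$ the problem has only the trivial solution, and Proposition \ref{prop: generalbitsadzelindepextension} fails as stated. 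Your argument, like the paper's, therefore proves the corollary only for $n\ge 2$.
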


            \begin{proof}
                Use the same argument as the proof of Theorem \ref{thm: bicomplexvekuabitsadzedirichletnotwelldefined} except appeal to Proposition \ref{prop: generalbitsadzelindepextension} in the place of Proposition \ref{lem: generalbitsadzelindep}. 
            \end{proof}

            Now, we extend Proposition \ref{prop: bcdirichletiffcomplexdirichletfornequalstwo}, its two corollaries, and Theorem \ref{thm: bcdirichletsolvabletfae}, which summarizes the preceding statements, to $\bc$-HOIV equations. These results provide conditions for solvability of the Dirichlet problem for the $\bc$-Vekua-Bitsadze equation and its higher-order extensions on the disk. 

            \begin{prop}
                Let $A \in W^{1,q}(\disk,\bc)$, $q>2$, and $\gamma_0, \gamma_1 \in C(\p\disk,\bc)$. The Dirichlet problem for the $\bc$-Vekua-Bitsadze equation
                \[
                    \begin{cases}
                        (\dbar - A)^2 w = 0, & \text{ in } \disk, \\
                        w = \gamma_0, & \text{ on } \p\disk,\\
                        (\dbar - A)w = \gamma_1, & \text{ on } \p\disk,
                    \end{cases}
                \]
                is solvable if and only if the Dirichlet problems for the Vekua-Bitsadze equation
                \[
                    \begin{cases}
                        \left( \frac{\p}{\p z^*} - (A^+)^*\right)^2 (w^+)^* = 0, & \text{ in } \disk,\\
                        (w^+)^* = (\gamma_0^+)^*, &\text{ on } \p\disk,\\
                        \left( \frac{\p}{\p z^*} - (A^+)^*\right) (w^+)^* = (\gamma_1^+)^*, & \text{ on } \p\disk,
                    \end{cases}
                \]
                and 
                \[
                    \begin{cases}
                        \left( \frac{\p}{\p z^*} - A^-\right)^2 w^- = 0, & \text{ in } \disk,\\
                        w^- = \gamma_0^-, &\text{ on } \p\disk,\\
                        \left( \frac{\p}{\p z^*} - A^-\right) w^- = \gamma_1^-, & \text{ on } \p\disk,
                    \end{cases}
                \]
                are solved by the complex components $w^+$ and $w^-$ of the idempotent representation of $w = p^+ w^+ + p^- w^-$. 
            \end{prop}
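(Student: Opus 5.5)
The approach mirrors the proof of Proposition \ref{prop: bcdirichletiffcomplexdirichletfornequalstwo}: decompose the operator, the solution and the data in the idempotent basis, and compare components. First I record the regularity correspondence. By Proposition \ref{cor: corollaryof2.24BCAtomic}, $A \in W^{1,q}(\disk,\bc)$ if and only if $(A^+)^*, A^- \in W^{1,q}(\disk)$ (apply the $L^q$ statement to $A$ and to its first derivatives, whose idempotent components are the derivatives of $A^\pm$). By Proposition \ref{prop: bcctsiffidemc}, $\gamma_0,\gamma_1 \in C(\p\disk,\bc)$ if and only if $(\gamma_k^+)^*, \gamma_k^- \in C(\p\disk)$ for $k=0,1$. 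So both complex problems are posed in the setting of Theorem \ref{thm: thmfourpointfivewbd2} with $n=2$.

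Next comes the operator identity. Writing $A = p^+A^+ + p^-A^-$ and using Definition \ref{bcdbardef} together with $(p^\pm)^2=p^\pm$ and $p^+p^-=0$, I get
\[
\dbar - A = p^+\left(\frac{\p}{\p z} - A^+\right) + p^-\left(\frac{\p}{\p z^*} - A^-\right).
\]
Hence, for every positive integer $m$,
\[
(\dbar - A)^m w = p^+\left(\frac{\p}{\p z}-A^+\right)^m w^+ + p^-\left(\frac{\p}{\p z^*}-A^-\right)^m w^-.
\]
This is exactly the computation already carried out in the proof of Lemma \ref{lem: bicomplexhoividempotentrep}. Since complex conjugation commutes with real differentiation, $\left(\left(\frac{\p}{\p z}-A^+\right)^m w^+\right)^* = \left(\frac{\p}{\p z^*}-(A^+)^*\right)^m (w^+)^*$.

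Now I apply uniqueness of the idempotent representation (Proposition \ref{everybchasplusandminus}) to each of the three conditions. For $m=2$ in $\disk$, $(\dbar-A)^2w=0$ holds if and only if both complex second-order equations hold, for $(w^+)^*$ and for $w^-$. This is also the content of Lemma \ref{lem: bicomplexhoividempotentrep}. For the boundary condition $w=\gamma_0$ on $\p\disk$, componentwise comparison gives $w^\pm=\gamma_0^\pm$, and conjugating the $+$ component yields $(w^+)^* = (\gamma_0^+)^*$. For $m=1$ on $\p\disk$, comparing $(\dbar - A)w=\gamma_1$ componentwise and conjugating the $+$ part gives the two first-order boundary conditions. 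Every step is an equivalence, so $w$ solves the bicomplex problem if and only if $(w^+)^*$ and $w^-$ solve the two complex problems. Conversely, given complex solutions $f,g$, the function $w:=p^+f^*+p^-g$ solves the bicomplex problem.

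The main point requiring care is justifying that the boundary trace of $(\dbar-A)w$ decomposes as stated: restriction to $\p\disk$ must commute with taking idempotent components. This holds because the components are given by the fixed linear formulas $w^\pm = \sca w \mp i\vect w$, and because continuity up to the boundary is preserved in both directions by Proposition \ref{prop: bcctsiffidemc}. Beyond this, the argument is bookkeeping.
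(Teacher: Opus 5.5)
Your proposal is correct and follows essentially the same route as the paper. The paper cites Lemma \ref{lem: bicomplexhoividempotentrep} for the equation in $\disk$, then uses the decomposition $(\dbar - A)w = p^+(\frac{\p}{\p z}-A^+)w^+ + p^-(\frac{\p}{\p z^*}-A^-)w^-$ and uniqueness of the idempotent representation for the two boundary conditions; your general-$m$ operator identity and your remark on restricting components to $\p\disk$ are just a slightly more explicit version of the same argument.
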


            \begin{proof}
                Recall, by Proposition \ref{prop: bcctsiffidemc}, $\gamma_0,\gamma_1 \in C(\p\disk,\bc)$ if and only if $(\gamma_0)^*, \gamma_0^-, (\gamma_1^+)^*, \gamma_1^- \in C(\p\disk)$. By Lemma \ref{lem: bicomplexhoividempotentrep}, 
                \[
                    (\dbar - A)^2 w = 0
                \]
                if and only if 
                \[
                     \left( \frac{\p}{\p z^*} - (A^+)^*\right)^2 (w^+)^* = 0
                \]
                and 
                \[
                     \left( \frac{\p}{\p z^*} - A^-\right)^2 w^- = 0.
                \]
                Since 
                \begin{align*}
                    w|_{\p\disk} &= p^+ w^+|_{\p\disk} + p^- w^-|_{\p\disk} 
                \end{align*}
                and
                \begin{align*}
                    \gamma_0 &= p^+ \gamma_0^+ + p^- \gamma_0^-,
                \end{align*}
                it follows that $w= \gamma_0$ on $\p\disk$ if and only if $(w^+)^*|_{\p\disk} = (\gamma_0^+)^*$ and $w^-|_{\p\disk} = \gamma_0^-$. Similarly, since 
                \[
                    (\dbar - A) w = p^+ \left(\frac{\p w^+}{\p z}-A^+\right)w^+ + p^- \left(\frac{\p w^-}{\p z^*}-A^-\right) w^-
                \]
                and 
                \begin{align*}
                    \gamma_1 &= p^+ \gamma_1^+ + p^- \gamma_1^-,
                \end{align*}
                it follows that $(\dbar-A) w = \gamma_1$ on $\p\disk$ if and only if $\left(\frac{\p }{\p z^*} -(A^+)^*\right) (w^+)^*|_{\p\disk}= (\gamma_1^+)^*$ and $\left(\frac{\p }{\p z^*}-A^-\right)w^-|_{\p\disk} = \gamma_1^-$. 

            \end{proof}

            \begin{corr}\label{corr: hoivdirichletforalln}
                Let $n$ be a positive integer, $A \in W^{n-1,q}(\disk,\bc)$, and $\gamma_k \in C(\p\disk,\bc)$, for $0 \leq k \leq n-1$. The Dirichlet problem for the bicomplex higher-order iterated Vekua equation
                \[
                    \begin{cases}
                        (\dbar-A)^n w = 0, & \text{ in } \disk,\\
                        (\dbar - A)^k w = \gamma_k, & \text{ on } \p\disk,
                    \end{cases}
                \]
                for $0 \leq k \leq n-1$, is solvable if and only if the Dirichlet problems for the higher-order iterated Vekua equations
                \[
                    \begin{cases}
                        \left( \frac{\p}{\p z^*} - (A^+)^*\right)^n (w^+)^* = 0, & \text{ in } \disk,\\
                        \left( \frac{\p}{\p z^*} - (A^+)^*\right)^k (w^+)^* = (\gamma_k^+)^*, & \text{ on } \p\disk,
                    \end{cases}
                \]
                for $0 \leq k \leq n-1$, and
                \[
                    \begin{cases}
                        \left( \frac{\p}{\p z^*} - A^-\right)^n w^- = 0, & \text{ in } \disk,\\
                        \left( \frac{\p}{\p z^*} - A^-\right)^k w^- = \gamma_k^-, & \text{ on } \p\disk,
                    \end{cases}
                \]
                for $0 \leq k \leq n-1$, are solved by the complex components $w^+$ and $w^-$ of the idempotent representation of $w = p^+ w^+ + p^- w^-$.          
            \end{corr}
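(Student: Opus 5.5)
The plan is to reduce the bicomplex problem to two complex problems through the idempotent decomposition, in the same way the preceding proposition handles $n=2$, and then to check the boundary conditions one order $k$ at a time. The key tool is an operator identity. Write $A = p^+A^+ + p^-A^-$ and use $\dbar = p^+\frac{\p}{\p z} + p^-\frac{\p}{\p z^*}$. Since $p^\pm$ are orthogonal idempotents with $p^++p^-=1$, the operator splits as
\[
\dbar - A = p^+\left(\frac{\p}{\p z}-A^+\right) + p^-\left(\frac{\p}{\p z^*}-A^-\right).
\]
Cross terms vanish because $p^+p^-=0$. By induction on $k$ (this is the same computation as in the proof of Lemma \ref{lem: bicomplexhoividempotentrep}), for every $0\le k\le n$ we get
\[
(\dbar-A)^k w = p^+\left(\frac{\p}{\p z}-A^+\right)^k w^+ + p^-\left(\frac{\p}{\p z^*}-A^-\right)^k w^-.
\]
Finally, taking complex conjugates gives $\left(\frac{\p}{\p z}-A^+\right)^k w^+ = \left[\left(\frac{\p}{\p z^*}-(A^+)^*\right)^k (w^+)^*\right]^*$.

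With this identity in hand, the steps are as follows.

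First, apply Lemma \ref{lem: bicomplexhoividempotentrep}, or equivalently the case $k=n$ of the identity above together with uniqueness of idempotent components (Proposition \ref{everybchasplusandminus}). This shows that $(\dbar-A)^n w=0$ in $\disk$ if and only if $(w^+)^*$ and $w^-$ solve the two complex higher-order iterated Vekua equations.

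Second, for each $0\le k\le n-1$, decompose $\gamma_k = p^+\gamma_k^+ + p^-\gamma_k^-$. By Proposition \ref{prop: bcctsiffidemc}, the components satisfy $(\gamma_k^+)^*,\gamma_k^-\in C(\p\disk)$, and likewise $(A^+)^*,A^-\in W^{n-1,q}(\disk)$. The complex problems are therefore posed in the same function classes, which makes them well-formed. Restricting the $k$-th identity to $\p\disk$ and comparing idempotent components shows that $(\dbar-A)^k w=\gamma_k$ on $\p\disk$ holds exactly when the $k$-th boundary conditions for $(w^+)^*$ and $w^-$ hold, with one complex conjugation on the $p^+$ side.

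Third, for the converse direction, take solutions $f$ and $g$ of the two complex problems and set $w:=p^+ f^* + p^- g$. Running the same identities backwards shows that $w$ solves the bicomplex problem, and that its idempotent components are $w^+=f^*$ and $w^-=g$.

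The main obstacle is only bookkeeping, namely justifying the $k$-fold operator identity. The point to check is that the multiplication operators $A^\pm$ commute with the idempotents $p^\pm$. They do, since $p^\pm$ are constants that commute with complex-valued functions and with $\frac{\p}{\p z}$ and $\frac{\p}{\p z^*}$. A short induction on $k$ then settles the identity, and the boundary regularity needed to restrict to $\p\disk$ is simply inherited componentwise.
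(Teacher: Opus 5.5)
Your proposal is correct and follows essentially the paper's route. The paper states this corollary without separate proof, as the direct extension of the $n=2$ proposition, and that proposition's argument uses Lemma \ref{lem: bicomplexhoividempotentrep} for the interior equation plus the splitting $\dbar - A = p^+(\frac{\p}{\p z}-A^+) + p^-(\frac{\p}{\p z^*}-A^-)$ and uniqueness of idempotent components for the boundary data — exactly what your $k$-fold identity supplies for each $0\le k\le n-1$. Your remark that the $k=n$ case of the same identity already gives the lemma, without the representation theorems used in its forward direction, is a valid and slightly more direct shortcut.
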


            \begin{corr}
                Let $n$ be a positive integer, $A \in W^{n-1,q}(\disk,\bc)$, and $\gamma_k \in C(\p\disk,\bc)$, for $0 \leq k \leq n-1$. The Dirichlet problem for the bicomplex higher-order iterated Vekua equation
                \[
                    \begin{cases}
                        (\dbar-A)^n w = 0, & \text{ in } \disk,\\
                        (\dbar - A)^k w = \gamma_k, & \text{ on } \p\disk,
                    \end{cases}
                \]
                for $0 \leq k \leq n-1$, is solvable if and only, for $|z| < 1$  and $0 \leq k \leq n-1$,
                \[
                    \sum_{\lambda = k}^{n-1}\frac{z^*}{2\pi i} \int_{|\zeta| = 1} (-1)^{\lambda - k} \frac{(\gamma_\lambda^+)^*(\zeta)e^{-T_\disk[(A^+)^*](\zeta)}}{1- z^*\zeta} \frac{[(\zeta - z)^*]^{\lambda- k}}{(\lambda - k)!}\,d\zeta   = 0
                \]
                and
                \[
                    \sum_{\lambda = k}^{n-1}\frac{z^*}{2\pi i} \int_{|\zeta| = 1} (-1)^{\lambda - k} \frac{\gamma_\lambda^-(\zeta) e^{-T_\disk[A^-](\zeta)}}{1- z^*\zeta} \frac{[(\zeta - z)^*]^{\lambda- k}}{(\lambda - k)!}\,d\zeta   = 0.
                \]

                The solution is 
                \[
                    w = p^+ (e^{T_\disk[(A^+)^*]}  \varphi_+)^* + p^- e^{T_\disk[A^-]} \varphi_-, 
                \]
                where
                \[
                     \varphi_+(z) =  \sum_{k=0}^{n-1} \frac{(-1)^k}{2\pi i} \int_{|\zeta| = 1} \frac{(\gamma_k^+)^*(\zeta)e^{-T_\disk[(A^+)^*](\zeta)}}{k!} \frac{[(\zeta - z)^*]^k}{\zeta - z}\,d\zeta 
                \]
                and
                \[
                     \varphi_-(z) =  \sum_{k=0}^{n-1} \frac{(-1)^k}{2\pi i} \int_{|\zeta| = 1} \frac{\gamma_k^-(\zeta)e^{-T_\disk[A^-](\zeta)}}{k!} \frac{[(\zeta - z)^*]^k}{\zeta - z}\,d\zeta .
                \]
            \end{corr}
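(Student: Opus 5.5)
The plan is to reduce the bicomplex problem to two complex higher-order iterated Vekua Dirichlet problems and then apply Theorem \ref{thm: thmfourpointfivewbd2} to each, following the pattern of the earlier corollary for $\dbar^n$.

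First, invoke Corollary \ref{corr: hoivdirichletforalln}. It says the bicomplex problem is solvable by $w = p^+ w^+ + p^- w^-$ if and only if two complex problems are solved. The first asks $(w^+)^*$ to solve the higher-order iterated Vekua problem with coefficient $(A^+)^*$ and boundary data $(\gamma_k^+)^*$. The second asks $w^-$ to solve the one with coefficient $A^-$ and data $\gamma_k^-$. Both complex problems are admissible inputs to Theorem \ref{thm: thmfourpointfivewbd2}:
\begin{itemize}
\item by Proposition \ref{cor: corollaryof2.24BCAtomic}, $A \in W^{n-1,q}(\disk,\bc)$ gives $(A^+)^*, A^- \in W^{n-1,q}(\disk)$, applying the proposition to $A$ and to its derivatives up to order $n-1$ (complex conjugation preserves these spaces);
\item by Proposition \ref{prop: bcctsiffidemc}, $\gamma_k \in C(\p\disk,\bc)$ gives $(\gamma_k^+)^*, \gamma_k^- \in C(\p\disk)$.
\end{itemize}

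Next, apply Theorem \ref{thm: thmfourpointfivewbd2} separately to the two problems. For the $+$ problem, take $A \mapsto (A^+)^*$ and $\gamma_\lambda \mapsto (\gamma_\lambda^+)^*$. Solvability is then equivalent to the first displayed integral condition, with weight $e^{-T_\disk[(A^+)^*]}$. The unique solution is $(w^+)^* = e^{T_\disk[(A^+)^*]}\varphi_+$, where $\varphi_+$ is the stated Cauchy-type integral. For the $-$ problem, take $A \mapsto A^-$ and $\gamma_\lambda \mapsto \gamma_\lambda^-$. This gives the second integral condition and $w^- = e^{T_\disk[A^-]}\varphi_-$.

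Finally, combine the two. The bicomplex problem is solvable exactly when both integral conditions hold, and then
\[
w = p^+ \bigl(e^{T_\disk[(A^+)^*]}\varphi_+\bigr)^* + p^- e^{T_\disk[A^-]}\varphi_-.
\]
Uniqueness of $w$ follows from uniqueness of each complex solution together with uniqueness of the idempotent representation (Proposition \ref{everybchasplusandminus}).

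The only delicate step is bookkeeping. We must conjugate the $+$ component before applying the complex theorem and conjugate back when reassembling $w$. We must also check that the boundary operators $(\dbar - A)^k$ split componentwise, as $p^+\bigl(\frac{\p}{\p z} - A^+\bigr)^k + p^-\bigl(\frac{\p}{\p z^*} - A^-\bigr)^k$. Under conjugation of the $+$ part this matches $\bigl(\frac{\p}{\p z^*} - (A^+)^*\bigr)^k$ applied to $(w^+)^*$. This is the same computation as in the proof of Lemma \ref{lem: bicomplexhoividempotentrep}, and Corollary \ref{corr: hoivdirichletforalln} already encodes it.
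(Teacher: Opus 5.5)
Your proposal is correct and follows the paper's proof essentially step for step: reduce via Corollary~\ref{corr: hoivdirichletforalln} to the two complex higher-order iterated Vekua Dirichlet problems for $(w^+)^*$ and $w^-$, apply Theorem~\ref{thm: thmfourpointfivewbd2} to each, and reassemble $w$ through the idempotent representation. Your extra checks fill in details the paper leaves implicit. These are that $(A^+)^*, A^- \in W^{n-1,q}(\disk)$, because $p^\pm$ are constants that commute with differentiation, and the conjugation bookkeeping showing the boundary operators $(\dbar - A)^k$ split componentwise.
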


                \begin{proof}

                    By Corollary \ref{corr: hoivdirichletforalln}, the Dirichlet problem for the $\bc$-HOIV equation
                \[
                    \begin{cases}
                        (\dbar-A)^n w = 0, & \text{ in } \disk,\\
                        (\dbar - A)^k w = \gamma_k, & \text{ on } \p\disk,
                    \end{cases}
                \]
                for $0 \leq k \leq n-1$, is solvable if and only if the Dirichlet problems for HOIV equations
                \[
                    \begin{cases}
                        \left( \frac{\p}{\p z^*} - (A^+)^*\right)^n (w^+)^* = 0, & \text{ in } \disk,\\
                        \left( \frac{\p}{\p z^*} - (A^+)^*\right)^k (w^+)^* = (\gamma_k^+)^*, & \text{ on } \p\disk,
                    \end{cases}
                \]
                for $0 \leq k \leq n-1$, and
                \[
                    \begin{cases}
                        \left( \frac{\p}{\p z^*} - A^-\right)^n w^- = 0, & \text{ in } \disk,\\
                        \left( \frac{\p}{\p z^*} - A^-\right)^k w^- = \gamma_k^-, & \text{ on } \p\disk,
                    \end{cases}
                \]
                for $0 \leq k \leq n-1$, are solved by the complex components $w^+$ and $w^-$ of the idempotent representation of $w = p^+ w^+ + p^- w^-$. By Theorem \ref{thm: thmfourpointfivewbd2}, the Dirichlet problems
                \[
                    \begin{cases}
                        \left( \frac{\p}{\p z^*} - (A^+)^*\right)^n (w^+)^* = 0, & \text{ in } \disk,\\
                        \left( \frac{\p}{\p z^*} - (A^+)^*\right)^k (w^+)^* = (\gamma_k^+)^*, & \text{ on } \p\disk,
                    \end{cases}
                \]
                for $0 \leq k \leq n-1$, and
                \[
                    \begin{cases}
                        \left( \frac{\p}{\p z^*} - A^-\right)^n w^- = 0, & \text{ in } \disk,\\
                        \left( \frac{\p}{\p z^*} - A^-\right)^k w^- = \gamma_k^-, & \text{ on } \p\disk,
                    \end{cases}
                \]
                for $0 \leq k \leq n-1$, are solvable if and only if, for $|z| < 1$  and $0 \leq k \leq n-1$,
            \[
                \sum_{\lambda = k}^{n-1}\frac{z^*}{2\pi i} \int_{|\zeta| = 1} (-1)^{\lambda - k} \frac{(\gamma^+_\lambda)^*(\zeta)e^{-T_\disk[(A^+)^*](\zeta)}}{1- z^*\zeta} \frac{[(\zeta - z)^*]^{\lambda- k}}{(\lambda - k)!}\,d\zeta   = 0.
            \]
            and
            \[
                \sum_{\lambda = k}^{n-1}\frac{z^*}{2\pi i} \int_{|\zeta| = 1} (-1)^{\lambda - k} \frac{\gamma^-_\lambda(\zeta)e^{-T_\disk[A^-](\zeta)}}{1- z^*\zeta} \frac{[(\zeta - z)^*]^{\lambda- k}}{(\lambda - k)!}\,d\zeta   = 0.
            \]
            Also, by Theorem \ref{thm: thmfourpointfivewbd2}, the solutions $(w^+)^*$ and $w^-$ are 
            \[
                (w^+)^* = e^{T_\disk[(A^+)^*]}  \varphi_+, 
            \]
            where
            \[
                \varphi_+(z) =  \sum_{k=0}^{n-1} \frac{(-1)^k}{2\pi i} \int_{|\zeta| = 1} \frac{(\gamma^+_\lambda)^*(\zeta)e^{-T_\disk[(A^+)^*](\zeta)}}{k!} \frac{[(\zeta - z)^*]^k}{\zeta - z}\,d\zeta 
            \]
            and 
            \[
                w^- = e^{T_\disk[A^-]}  \varphi_-, 
            \]
            where
            \[
                \varphi_-(z) =  \sum_{k=0}^{n-1} \frac{(-1)^k}{2\pi i} \int_{|\zeta| = 1} \frac{\gamma^-_\lambda(\zeta)e^{-T_\disk[A^-](\zeta)}}{k!} \frac{[(\zeta - z)^*]^k}{\zeta - z}\,d\zeta .
            \]
            Therefore, 
            \[
                    \begin{cases}
                        (\dbar-A)^n w = 0, & \text{ in } \disk,\\
                        (\dbar - A)^k w = \gamma_k, & \text{ on } \p\disk,
                    \end{cases}
                \]
                for $0 \leq k \leq n-1$, is solvable if and only, for $|z| < 1$  and $0 \leq k \leq n-1$,
                \[
                    \sum_{\lambda = k}^{n-1}\frac{z^*}{2\pi i} \int_{|\zeta| = 1} (-1)^{\lambda - k} \frac{(\gamma_\lambda^+)^*(\zeta)e^{-T_\disk[(A^+)^*](\zeta)}}{1- z^*\zeta} \frac{[(\zeta - z)^*]^{\lambda- k}}{(\lambda - k)!}\,d\zeta   = 0
                \]
                and
                \[
                    \sum_{\lambda = k}^{n-1}\frac{z^*}{2\pi i} \int_{|\zeta| = 1} (-1)^{\lambda - k} \frac{\gamma_\lambda^-(\zeta) e^{-T_\disk[A^-](\zeta)}}{1- z^*\zeta} \frac{[(\zeta - z)^*]^{\lambda- k}}{(\lambda - k)!}\,d\zeta   = 0,
                \]
                and the solution is 
                \[
                    w = p^+ (e^{T_\disk[(A^+)^*]}  \varphi_+)^* + p^- e^{T_\disk[A^-]} \varphi_-, 
                \]
                where
                \[
                     \varphi_+(z) =  \sum_{k=0}^{n-1} \frac{(-1)^k}{2\pi i} \int_{|\zeta| = 1} \frac{(\gamma_k^+)^*(\zeta)e^{-T_\disk[(A^+)^*](\zeta)}}{k!} \frac{[(\zeta - z)^*]^k}{\zeta - z}\,d\zeta 
                \]
                and
                \[
                     \varphi_-(z) =  \sum_{k=0}^{n-1} \frac{(-1)^k}{2\pi i} \int_{|\zeta| = 1} \frac{\gamma_k^-(\zeta)e^{-T_\disk[A^-](\zeta)}}{k!} \frac{[(\zeta - z)^*]^k}{\zeta - z}\,d\zeta .
                \]
                \end{proof}

            \begin{theorem}
                 Let $n$ be a positive integer, $A \in W^{n-1,q}(\disk,\bc)$, and $\gamma_k \in C(\p\disk,\bc)$, for $0 \leq k \leq n-1$. 
                 The following three statements are equivalent for a function $w = p^+ w^+ + p^- w^-:\disk\to\bc$:
                    \begin{enumerate}
                        \item The Dirichlet problem for the bicomplex higher-order Vekua equation
                        \[
                            \begin{cases}
                            (\dbar - A)^n w = 0, & \text{ in } \disk, \\
                            (\dbar - A)^k w = \gamma_k, & \text{ on } \p\disk,
                            \end{cases}
                        \]
                        for $0 \leq k \leq n-1$, is solvable, and the solution is $w$.
                        
                        \item The Dirichlet problems for the higher-order iterated Vekua equation
                        \[
                            \begin{cases}
                                \left( \frac{\p}{\p z^*} - (A^+)^*\right)^n (w^+)^* = 0, & \text{ in } \disk,\\
                                \left( \frac{\p}{\p z^*} - (A^+)^*\right)^k (w^+)^* = (\gamma_k^+)^*, & \text{ on } \p\disk,
                            \end{cases}
                        \]
                        for $0 \leq k \leq n-1$, and
                        \[
                            \begin{cases}
                                \left( \frac{\p}{\p z^*} - A^-\right)^n w^- = 0, & \text{ in } \disk,\\
                                \left( \frac{\p}{\p z^*} - A^-\right)^k w^- = \gamma_k^-, & \text{ on } \p\disk,
                            \end{cases}
                        \]
                        for $0 \leq k \leq n-1$, are solvable, and the solutions are $(w^+)^*$ and $w^-$, respectively.
                        
                        \item For $|z| < 1$  and $0 \leq k \leq n-1$,
                        \[
                            \sum_{\lambda = k}^{n-1}\frac{z^*}{2\pi i} \int_{|\zeta| = 1} (-1)^{\lambda - k} \frac{(\gamma_\lambda^+)^*(\zeta)e^{-T_\disk[(A^+)^*](\zeta)}}{1- z^*\zeta} \frac{[(\zeta - z)^*]^{\lambda- k}}{(\lambda - k)!}\,d\zeta   = 0
                        \]
                        and
                     \[
                            \sum_{\lambda = k}^{n-1}\frac{z^*}{2\pi i} \int_{|\zeta| = 1} (-1)^{\lambda - k} \frac{\gamma_\lambda^-(\zeta) e^{-T_\disk[A^-](\zeta)}}{1- z^* \zeta} \frac{[(\zeta - z)^*]^{\lambda- k}}{(\lambda - k)!}\,d\zeta   = 0.
                    \]
                    \end{enumerate}
                    When it exists, the function $w$ is
                    \[
                        w = p^+ (e^{T_\disk[(A^+)^*]}  \varphi_+)^* + p^- e^{T_\disk[A^-]} \varphi_-, 
                    \]
                    where
                    \[
                        \varphi_+(z) =  \sum_{k=0}^{n-1} \frac{(-1)^k}{2\pi i} \int_{|\zeta| = 1} \frac{(\gamma_k^+)^*(\zeta)e^{-T_\disk[(A^+)^*](\zeta)}}{k!} \frac{[(\zeta - z)^*]^k}{\zeta - z}\,d\zeta 
                    \]
                    and
                    \[
                        \varphi_-(z) =  \sum_{k=0}^{n-1} \frac{(-1)^k}{2\pi i} \int_{|\zeta| = 1} \frac{\gamma_k^-(\zeta)e^{-T_\disk[A^-](\zeta)}}{k!} \frac{[(\zeta - z)^*]^k}{\zeta - z}\,d\zeta .
                    \]
                \end{theorem}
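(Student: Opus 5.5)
The plan is to prove the three-way equivalence by chaining the two corollaries just established, exactly as Theorem \ref{thm: bcdirichletsolvabletfae} condensed Proposition \ref{prop: bcdirichletiffcomplexdirichletfornequalstwo} and its corollaries.

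\textbf{(1) $\Leftrightarrow$ (2).} This is Corollary \ref{corr: hoivdirichletforalln}, read as a statement about a fixed $w$. By Lemma \ref{lem: bicomplexhoividempotentrep}, $(\dbar-A)^n w=0$ holds if and only if both component HOIV equations hold. For the boundary conditions, I first note that
\[
(\dbar-A)^k = p^+\left(\tfrac{\p}{\p z}-A^+\right)^k + p^-\left(\tfrac{\p}{\p z^*}-A^-\right)^k .
\]
This holds because $p^\pm$ are orthogonal idempotents. Conjugating the $p^+$ component turns $\left(\tfrac{\p}{\p z}-A^+\right)^k w^+$ into $\left(\tfrac{\p}{\p z^*}-(A^+)^*\right)^k (w^+)^*$. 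By uniqueness of the idempotent representation (Proposition \ref{everybchasplusandminus}), $(\dbar-A)^k w=\gamma_k$ on $\p\disk$ is then equivalent to the two componentwise boundary identities. The regularity hypotheses transfer through Proposition \ref{prop: bcctsiffidemc}: $A\in W^{n-1,q}(\disk,\bc)$ if and only if $(A^+)^*, A^-\in W^{n-1,q}(\disk)$, and $\gamma_k\in C(\p\disk,\bc)$ if and only if its components are continuous.

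\textbf{(2) $\Leftrightarrow$ (3).} I apply Theorem \ref{thm: thmfourpointfivewbd2} separately to each complex problem. For the first problem I take coefficient $(A^+)^*$ and data $(\gamma_k^+)^*$. For the second I take coefficient $A^-$ and data $\gamma_k^-$. That theorem states that each problem is uniquely solvable exactly when its integral condition holds. The conjunction of the two conditions is precisely statement (3).

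\textbf{Solution formula.} Theorem \ref{thm: thmfourpointfivewbd2} also gives
\[
(w^+)^* = e^{T_\disk[(A^+)^*]}\varphi_+ , \qquad w^- = e^{T_\disk[A^-]}\varphi_- .
\]
Reassembling via $w=p^+((w^+)^*)^* + p^- w^-$ yields the stated formula.

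\textbf{Expected obstacle.} The only delicate point is bookkeeping in (1)$\Leftrightarrow$(2). One must check that conjugation commutes correctly with the iterated operator, i.e.\ $\left[\left(\tfrac{\p}{\p z}-A^+\right)^k u\right]^* = \left(\tfrac{\p}{\p z^*}-(A^+)^*\right)^k u^*$. This follows by induction on $k$ from $(\p u/\p z)^* = \p u^*/\p z^*$.

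A second point is the interpretation of "solvable, and the solution is $w$". Uniqueness of the bicomplex solution comes from uniqueness of both component solutions, together with uniqueness of the idempotent decomposition. Everything else is a direct citation of earlier results.
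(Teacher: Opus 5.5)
Your proposal is correct and takes essentially the paper's route. The paper assembles this theorem from Corollary~\ref{corr: hoivdirichletforalln}, which uses the splitting $(\dbar-A)^k = p^+\left(\frac{\p}{\p z}-A^+\right)^k + p^-\left(\frac{\p}{\p z^*}-A^-\right)^k$ and uniqueness of the idempotent representation, together with Theorem~\ref{thm: thmfourpointfivewbd2} applied separately to each component for the integral conditions and the solution formula. One caveat, shared with the paper: statement (3) does not mention $w$, so (3)$\Rightarrow$(1),(2) only holds when $w$ is taken to be the displayed solution formula.
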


        \subsubsection{Other Non-degenerate Conics}\label{last one}

            Finally, we show the conclusion of Theorem \ref{thm: mainthm} extends to the setting of the $\bc$-Vekua-Bitsadze equation with two proofs. Note, the boundary condition is again a product of an exponential function and a polynomial. Unsurprisingly, we replace the classic $T_D(\cdot)$ operator, a right-inverse for $\frac{\p}{\p z^*}$, with its bicomplex counterpart $T_D^\bc(\cdot)$, which is a right-inverse for $\dbar$, in the exponential function factor.

        \begin{theorem}\label{thm: bcmainthm}
        Let $Q(x,y) = 0$ be a non-degenerate conic that is not a circumference, $\Gamma = \{(x,y) \in \mathbb{R}^2 : Q(x,y) = 0\}$, $D$ be the region enclosed by $\Gamma$, $P$ be a polynomial in $x$ and $y$, and $A \in W^{1,q}(D,\bc)$, $q>2$. The Dirichlet problem for the $\bc$-Vekua-Bitsadze equation
        \begin{equation}\label{eq: vekbitdirwellposed}
            \begin{cases}
                \left( \dbar - A \right)^2 w = 0, & \text{ in } D,\\
                w = e^{T_D^\bc[A]}P, & \text{ on } \Gamma,
            \end{cases}
        \end{equation}
        has a unique solution. 
    \end{theorem}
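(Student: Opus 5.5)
The plan is to reduce the bicomplex problem \eqref{eq: vekbitdirwellposed} to two complex Dirichlet problems of the type already solved in Theorem \ref{thm: mainthm}, using the idempotent splitting. By Lemma \ref{lem: bicomplexhoividempotentrep} (whose proof on $\disk$ carries over verbatim to the bounded simply connected domain $D$), $w = p^+ w^+ + p^- w^-$ solves $(\dbar - A)^2 w = 0$ in $D$ if and only if $(w^+)^*$ solves $\left(\frac{\p}{\p z^*} - (A^+)^*\right)^2 (w^+)^* = 0$ and $w^-$ solves $\left(\frac{\p}{\p z^*} - A^-\right)^2 w^- = 0$. By Proposition \ref{cor: corollaryof2.24BCAtomic} (applied to $A$ and its first derivatives), $(A^+)^*, A^- \in W^{1,q}(D)$, so both complex equations fall under the hypotheses of Theorem \ref{thm: mainthm}.

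The next step is to decompose the boundary datum. Using the Remark following the definition of $T_D^\bc$, we have $T_D^\bc[A] = p^+ (T_D[(A^+)^*])^* + p^- T_D[A^-]$. Then the definition of the bicomplex exponential gives
\[
e^{T_D^\bc[A]} = p^+ \left(e^{T_D[(A^+)^*]}\right)^* + p^- e^{T_D[A^-]},
\]
since $(e^{u})^* = e^{u^*}$. Writing $P = p^+ P^+ + p^- P^-$, the functions $P^\pm = \sca P \mp i \vect P$ and $(P^+)^*$ are again polynomials in $x,y$ (with complex coefficients; I would check that Theorem \ref{thm: dirichletbianalyticwellposed}, and hence Theorem \ref{thm: mainthm}, apply to complex-coefficient polynomials by linearity over real and imaginary parts). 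Therefore
\[
e^{T_D^\bc[A]}P = p^+ \left(e^{T_D[(A^+)^*]}(P^+)^*\right)^* + p^- e^{T_D[A^-]}P^-.
\]
By uniqueness of the idempotent representation (Proposition \ref{everybchasplusandminus}), the boundary condition $w = e^{T_D^\bc[A]}P$ on $\Gamma$ is equivalent to $(w^+)^* = e^{T_D[(A^+)^*]}(P^+)^*$ and $w^- = e^{T_D[A^-]}P^-$ on $\Gamma$.

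Now Theorem \ref{thm: mainthm} gives unique solutions $f$ and $g$ of the two complex problems with coefficients $(A^+)^*$ and $A^-$ and boundary data $(P^+)^*$ and $P^-$, respectively. Set $w := p^+ f^* + p^- g$. The converse direction of Lemma \ref{lem: bicomplexhoividempotentrep} shows that $w$ solves the equation, and the boundary computation above shows that it meets the boundary condition. For uniqueness, any solution $\tilde w$ yields components $(\tilde w^+)^*$ and $\tilde w^-$ solving the same complex problems, so they equal $f$ and $g$; hence $\tilde w = w$.

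The main obstacle is the exponential bookkeeping: verifying that conjugation commutes correctly with $T_D$ in the $p^+$ component, so that the $p^+$ part of $e^{T_D^\bc[A]}$ is exactly the conjugate of $e^{T_D[(A^+)^*]}$ rather than some other exponential. This is what makes the $p^+$ boundary datum have the form $e^{T_D[\cdot]}\times$polynomial that Theorem \ref{thm: mainthm} requires. An alternative route avoids the splitting of the equation: use Theorem \ref{thm: bchoivsimprinreptheorem} to write $w = e^{T_D^\bc[A]} b$, where $b$ solves $\dbar^2 b = 0$. Since $e^{T_D^\bc[A]}$ is invertible (both idempotent components are nonzero), the problem reduces to $b = P$ on $\Gamma$, which Theorem \ref{thm: generalizationOfBianalyticWellPosed} solves uniquely.
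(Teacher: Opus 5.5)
Your proposal is correct and matches the paper's main proof step for step. You split the equation via Lemma~\ref{lem: bicomplexhoividempotentrep}, apply Theorem~\ref{thm: mainthm} to the components with coefficients $(A^+)^*$ and $A^-$, reassemble $w = p^+ f^* + p^- g$, and identify the boundary datum using $T_D^\bc[A] = p^+ (T_D[(A^+)^*])^* + p^- T_D[A^-]$. Your alternative route through Theorem~\ref{thm: bchoivsimprinreptheorem} and Theorem~\ref{thm: generalizationOfBianalyticWellPosed} is exactly the paper's second proof, and your remark that $e^{T_D^\bc[A]}$ is invertible makes explicit a step the paper's uniqueness argument there uses tacitly.
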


    \begin{proof}

        By Lemma \ref{lem: bicomplexhoividempotentrep}, $w = p^+ w^+ + p^-w^-$ is a solution of $(\dbar^2 - A)^2 w = 0$ if and only if $(w^+)^*$ and $w^-$ solve
                 \[
                        \left( \frac{\p }{\p z^*} - (A^+)^*\right)^2 (w^+)^* = 0
                 \]
                 and 
                 \[ 
                        \left( \frac{\p}{\p z^*} - A^-\right)^2 w^- = 0,
                 \]
                 respectively. Since $A \in W^{1,q}(D, \bc)$, $q>2$, it follows that $(A^+)^*, A^- \in W^{1, q}(D)$ by Proposition \ref{cor: corollaryof2.24BCAtomic}. So, by Theorem \ref{thm: mainthm}, there exists a unique solution $f$ to the Dirichlet problem
                 \begin{equation}\label{eq: fdirproblem}
                    \begin{cases}
                        \left(\frac{\p}{\p z^*} - (A^+)^* \right)^2 f = 0,& \text{ in } D,\\
                        f = e^{T_D[(A^+)^*]} (P^+)^*, & \text{ on } \Gamma,
                    \end{cases}
                 \end{equation}
                and a unique solution $g$ to the Dirichlet problem 
                \begin{equation}\label{eq: gdirproblem}
                    \begin{cases}
                        \left( \frac{\p}{\p z^*} - A^-\right)^2 g = 0,& \text{ in } D,\\
                        g = e^{T_D[A^-]} P^-, & \text{ on } \Gamma.
                    \end{cases}
                 \end{equation}
                 Define the function $w$ by $w:= p^+ (f)^* + p^- g$. By construction, 
                 \[
                    (\dbar - A)^2 w = 0
                 \]
                 and 
                \begin{align*}
                    w|_{\Gamma} &= 
                    p^+ (f)^*|_{\Gamma} + p^- g|_{\Gamma}\\
                    &= p^+ (e^{T_D[(A^+)^*]} (P^+)^*)^* + p^- e^{T_D[A^-]}P^-\\
                    &= p^+ e^{(T_D[(A^+)^*])^*} P^+ + p^- e^{T_D[A^-]} P^-\\
                    &= (p^+ e^{(T_D[(A^+)^*])^*}+p^- e^{T_D[A^-]})(p^+P^+ + p^- P^-)\\
                    &= e^{p^+ (T_D[(A^+)^*])^* + p^- T_D[A^-]}P\\
                    &= e^{T_D^\bc[A]} P .
                \end{align*}
                Hence, $w$ solves \eqref{eq: vekbitdirwellposed}. Uniqueness of the solution follows from the uniqueness of the solutions of \eqref{eq: fdirproblem} and \eqref{eq: gdirproblem} and of the idempotent representation of $w$. 
 
            \end{proof}

            \begin{proof}[Alternative Proof]
                All solutions of $(\dbar - A)^2 w = 0$ have the form 
                \[
                    w = e^{T_D^\bc[A]} (\varphi_0 + \widehat{z^*}\varphi_1),
                \]
                where $\dbar \varphi_i = A\varphi_i$, $i \in \{1,2\}$, by Theorem \ref{thm: bchoivsimprinreptheorem}. By Theorem \ref{thm: generalizationOfBianalyticWellPosed}, there exist unique $\varphi_0, \varphi_1$ such that $b = \varphi_0 + \widehat{z^*} \varphi_1$ solves
                \[
                    \begin{cases}
                        \dbar^2 b = 0 & \text{ in } D\\
                        b = P & \text{ on } \Gamma
                    \end{cases}.
                \]
                Therefore, $w = e^{T_D^\bc[A]} b$ solves $(\dbar - A)^2 w = 0$ and 
                \begin{align*}
                    w|_{\Gamma} &= e^{T_D^\bc[A]} b|_{\Gamma}
                    = e^{T_D^\bc[A]} P. 
                \end{align*}
                Uniqueness of the solution follows from the uniqueness of $b$. 
            \end{proof}

\begin{comm}
    Similarly to the complex case, see Remark \ref{remark: hoivdbvp}, the argument of the first proof of Theorem \ref{thm: bcmainthm} directly extends to Dirichlet problems for $\bc$-HOIV equations (order greater than two) if a higher-order generalization of Theorem \ref{thm: mainthm} is available, and the argument of the second proof directly extends to Dirichlet problems for $\bc$-HOIV equations if a generalization of Theorem \ref{thm: generalizationOfBianalyticWellPosed} is available for $\bc$-polyanalytic functions of order greater than two. 
\end{comm}

\section*{Acknowledgements}
The author thanks the anonymous referees for their careful reading of this manuscript. Their comments and suggestions improved the quality of the work presented.

The author also thanks Briceyda Delgado for sparking their interest in the Dirichlet problem for the Bitsadze equation while working on \cite{WBD2}.

\section*{Declarations}
\subsection*{Conflict of Interest}
The authors declare that they have no conflicts of interest.

\subsection*{Funding Statement}
The authors declare that there are no funders to report for this submission.

\printbibliography

@book {Vek,
    AUTHOR = {Vekua, I. N.},
     TITLE = {Generalized analytic functions},
 PUBLISHER = {Pergamon Press, London-Paris-Frankfurt; Addison-Wesley
              Publishing Co., Inc., Reading, Mass.},
      YEAR = {1962},
     PAGES = {xxix+668},
   MRCLASS = {30.81},
  MRNUMBER = {0150320},
MRREVIEWER = {A. E. Heins},
}

@book {Balk,
    AUTHOR = {Balk, M.},
     TITLE = {Polyanalytic functions},
    SERIES = {Mathematical Research},
    VOLUME = {63},
 PUBLISHER = {Akademie-Verlag, Berlin},
      YEAR = {1991},
     PAGES = {197},
      ISBN = {3-05-501292-5},
   MRCLASS = {30G20 (31A30)},
  MRNUMBER = {1184141},
MRREVIEWER = {Heinrich Begehr},
}

@article {metahardy,
    AUTHOR = {Ku, M. and He, F. and Wang, Y.},
     TITLE = {Riemann-{H}ilbert problems for {H}ardy space of meta-analytic
              functions on the unit disc},
   JOURNAL = {Complex Anal. Oper. Theory},
  FJOURNAL = {Complex Analysis and Operator Theory},
    VOLUME = {12},
      YEAR = {2018},
    NUMBER = {2},
     PAGES = {457--474},
      ISSN = {1661-8254},
   MRCLASS = {30E25 (30H10)},
  MRNUMBER = {3756167},
MRREVIEWER = {Kuzman Adzievski},
       DOI = {10.1007/s11785-017-0705-1},
       URL = {https://doi.org/10.1007/s11785-017-0705-1},
}

@book {BegBook,
    AUTHOR = {Begehr, H.},
     TITLE = {Complex analytic methods for partial differential equations},
      NOTE = {An introductory text},
 PUBLISHER = {World Scientific Publishing Co., Inc., River Edge, NJ},
      YEAR = {1994},
     PAGES = {x+273},
      ISBN = {981-02-1550-9},
   MRCLASS = {35C15 (30E20 35A20)},
  MRNUMBER = {1314196},
MRREVIEWER = {Guo Chun Wen},
       DOI = {10.1142/2162},
       URL = {https://doi.org/10.1142/2162},
}

@book {ellipquasi,
    AUTHOR = {Astala, K. and Iwaniec, T. and Martin, G.},
     TITLE = {Elliptic partial differential equations and quasiconformal
              mappings in the plane},
    SERIES = {Princeton Mathematical Series},
    VOLUME = {48},
 PUBLISHER = {Princeton University Press, Princeton, NJ},
      YEAR = {2009},
     PAGES = {xviii+677},
      ISBN = {978-0-691-13777-3},
   MRCLASS = {30C62 (30G20 35J46 35J60 35J92)},
  MRNUMBER = {2472875},
MRREVIEWER = {Olli Martio},
}

@article {BegBVPCA2,
    AUTHOR = {Begehr, H.},
     TITLE = {Boundary value problems in complex analysis. {II}},
   JOURNAL = {Bol. Asoc. Mat. Venez.},
  FJOURNAL = {Bolet\'in de la Asociaci\'on Matem\'atica Venezolana},
    VOLUME = {12},
      YEAR = {2005},
    NUMBER = {2},
     PAGES = {217--250},
      ISSN = {1315-4125},
   MRCLASS = {30G20 (30E20 30E25 35J25)},
  MRNUMBER = {2229766},
MRREVIEWER = {Michael\ Shapiro},
}

@incollection {itvek,
    AUTHOR = {Berglez, P.},
     TITLE = {On the solutions of the iterated {B}ers-{V}ekua equation},
 BOOKTITLE = {Functional-analytic and complex methods, their interactions,
              and applications to partial differential equations ({G}raz,
              2001)},
     PAGES = {266--275},
 PUBLISHER = {World Sci. Publ., River Edge, NJ},
      YEAR = {2001},
   MRCLASS = {30G20},
  MRNUMBER = {1893260},
MRREVIEWER = {Heinrich Begehr},
}

@incollection {itvekbvp,
    AUTHOR = {Berglez, P. and Luong, T. T.},
     TITLE = {On a boundary value problem for a class of generalized
              analytic functions},
 BOOKTITLE = {Advances in harmonic analysis and operator theory},
    SERIES = {Oper. Theory Adv. Appl.},
    VOLUME = {229},
     PAGES = {91--99},
 PUBLISHER = {Birkh\"{a}user/Springer Basel AG, Basel},
      YEAR = {2013},
   MRCLASS = {30G20 (35F15 35G15)},
  MRNUMBER = {3060409},
MRREVIEWER = {Wolfgang Tutschke},
       DOI = {10.1007/978-3-0348-0516-2\_5},
       URL = {https://doi.org/10.1007/978-3-0348-0516-2_5},
}

@incollection {BerglezBCPseudo,
    AUTHOR = {Berglez, P.},
     TITLE = {On some classes of bicomplex pseudoanalytic functions},
 BOOKTITLE = {Progress in analysis and its applications},
     PAGES = {81--88},
 PUBLISHER = {World Sci. Publ., Hackensack, NJ},
      YEAR = {2010},
      ISBN = {978-981-4313-16-2; 981-4313-16-5},
   MRCLASS = {30G20},
  MRNUMBER = {2757992},
       DOI = {10.1142/9789814313179\_0011},
       URL = {https://doi.org/10.1142/9789814313179_0011},
}

@incollection {BerglezBCPseudo2,
    AUTHOR = {Berglez, P. and Luong, T. T.},
     TITLE = {On a boundary value problem for a class of generalized
              analytic functions},
 BOOKTITLE = {Advances in harmonic analysis and operator theory},
    SERIES = {Oper. Theory Adv. Appl.},
    VOLUME = {229},
     PAGES = {91--99},
 PUBLISHER = {Birkh\"auser/Springer Basel AG, Basel},
      YEAR = {2013},
      ISBN = {978-3-0348-0515-5; 978-3-0348-0516-2},
   MRCLASS = {30G20 (35F15 35G15)},
  MRNUMBER = {3060409},
MRREVIEWER = {Wolfgang\ Tutschke},
       DOI = {10.1007/978-3-0348-0516-2\_5},
       URL = {https://doi.org/10.1007/978-3-0348-0516-2_5},
}

@article {Straube,
    AUTHOR = {Straube, E. J.},
     TITLE = {Harmonic and analytic functions admitting a distribution
              boundary value},
   JOURNAL = {Ann. Scuola Norm. Sup. Pisa Cl. Sci. (4)},
  FJOURNAL = {Annali della Scuola Normale Superiore di Pisa. Classe di
              Scienze. Serie IV},
    VOLUME = {11},
      YEAR = {1984},
    NUMBER = {4},
     PAGES = {559--591},
      ISSN = {0391-173X,2036-2145},
   MRCLASS = {31B25 (32A40 46F20)},
  MRNUMBER = {808424},
MRREVIEWER = {Steven\ R.\ Bell},
       URL = {http://www.numdam.org/item?id=ASNSP_1984_4_11_4_559_0},
}

@article {WB3,
    AUTHOR = {Blair, W. L.},
     TITLE = {Hardy spaces of meta-analytic functions and the Schwarz boundary value problem},
   JOURNAL = {Complex Anal. Synerg.},
  FJOURNAL = {Complex Analysis and its Synergies},
    VOLUME = {10},
      YEAR = {2024},
    NUMBER = {3},
     PAGES = {Paper No. 13, 9},
      ISSN = {2524-7581,2197-120X},
       DOI = {10.1007/s40627-024-00139-9},
       URL = {https://doi.org/10.1007/s40627-024-00139-9}
}

@article {Bers,
    AUTHOR = {Bers, L.},
     TITLE = {An outline of the theory of pseudoanalytic functions},
   JOURNAL = {Bull. Amer. Math. Soc.},
  FJOURNAL = {Bulletin of the American Mathematical Society},
    VOLUME = {62},
      YEAR = {1956},
     PAGES = {291--331},
      ISSN = {0002-9904},
   MRCLASS = {30.0X},
  MRNUMBER = {81936},
MRREVIEWER = {J.\ Ferrand},
       DOI = {10.1090/S0002-9904-1956-10037-2},
       URL = {https://doi.org/10.1090/S0002-9904-1956-10037-2},
}

@article {WBD,
    AUTHOR = {Blair, W. L. and Delgado, B. B.},
     TITLE = {Hardy {S}paces of {S}olutions to {H}igher-{O}rder {V}ekua
              {E}quations and the {S}chwarz {B}oundary {V}alue {P}roblem},
   JOURNAL = {J. Geom. Anal.},
  FJOURNAL = {Journal of Geometric Analysis},
    VOLUME = {35},
      YEAR = {2025},
    NUMBER = {1},
     PAGES = {Paper No. 13},
      ISSN = {1050-6926,1559-002X},
   MRCLASS = {30H10 (30E25 30G20 35G15 46F20)},
  MRNUMBER = {4819984},
       DOI = {10.1007/s12220-024-01825-0},
       URL = {https://doi.org/10.1007/s12220-024-01825-0},
}

@article {WBD2,
    AUTHOR = {Delgado, B. B. and Blair, W. L.},
     TITLE = {Generalizations of poly-{B}ergman spaces and the {B}itsadze
              equation},
   JOURNAL = {Bol. Soc. Mat. Mex. (3)},
  FJOURNAL = {Bolet\'in de la Sociedad Matem\'atica Mexicana. Third Series},
    VOLUME = {31},
      YEAR = {2025},
    NUMBER = {3},
     PAGES = {Paper No. 132},
      ISSN = {1405-213X,2296-4495},
   MRCLASS = {30G20 (30E25 30H20 35J46 46E20)},
  MRNUMBER = {4969552},
       DOI = {10.1007/s40590-025-00812-x},
       URL = {https://doi.org/10.1007/s40590-025-00812-x},
}

@article{BCAtomic,
author = {Blair, W. L.},
title = {An atomic representation for bicomplex Hardy classes},
journal = {Complex Variables and Elliptic Equations},
year = {2025},
doi = {10.1080/17476933.2025.2549391},

}

@article {BCHoiv,
    AUTHOR = {Blair, W. L.},
     TITLE = {Bicomplex {H}ardy classes of solutions to higher-order {V}ekua
              equations},
   JOURNAL = {J. Math. Anal. Appl.},
  FJOURNAL = {Journal of Mathematical Analysis and Applications},
    VOLUME = {558},
      YEAR = {2026},
    NUMBER = {2},
     PAGES = {Paper No. 130397},
      ISSN = {0022-247X,1096-0813},
   MRCLASS = {30G20 (46F20)},
  MRNUMBER = {5016713},
       DOI = {10.1016/j.jmaa.2026.130397},
       URL = {https://doi.org/10.1016/j.jmaa.2026.130397},
}

@unpublished{BCHarmVek,
AUTHOR = {Blair, W. L.},
TITLE = {An atomic representation for Hardy spaces of generalized analytic functions},
NOTE = {Submitted}
}

@unpublished{BCSchwarz,
AUTHOR = {Blair, W. L.},
TITLE = {Bicomplex Schwarz and Dirichlet Boundary Value Problems},
NOTE = {Submitted. URL: https://arxiv.org/abs/2507.15653}
}

@unpublished{BCBeltrami,
AUTHOR = {Blair, W. L.},
TITLE = {Bicomplex Hardy Classes of Solutions to Beltrami Equations and the Schwarz Boundary Value Problem},
NOTE = {Submitted. URL: https://arxiv.org/abs/2507.15657}
}

@article {Segre,
    AUTHOR = {Segre, C.},
     TITLE = {Le rappresentazioni reali delle forme complesse e gli enti
              iperalgebrici},
   JOURNAL = {Math. Ann.},
  FJOURNAL = {Mathematische Annalen},
    VOLUME = {40},
      YEAR = {1892},
    NUMBER = {3},
     PAGES = {413--467},
      ISSN = {0025-5831,1432-1807},
   MRCLASS = {99-04},
  MRNUMBER = {1510728},
       DOI = {10.1007/BF01443559},
       URL = {https://doi.org/10.1007/BF01443559},
}

@article {CastaKrav,
    AUTHOR = {Casta\~neda, A. and Kravchenko, V. V.},
     TITLE = {New applications of pseudoanalytic function theory to the
              {D}irac equation},
   JOURNAL = {J. Phys. A},
  FJOURNAL = {Journal of Physics. A. Mathematical and General},
    VOLUME = {38},
      YEAR = {2005},
    NUMBER = {42},
     PAGES = {9207--9219},
      ISSN = {0305-4470,1751-8121},
   MRCLASS = {35Q40 (30G20 35C05)},
  MRNUMBER = {2186602},
       DOI = {10.1088/0305-4470/38/42/003},
       URL = {https://doi.org/10.1088/0305-4470/38/42/003},
}

@book {PriceMulti,
    AUTHOR = {Price, G. B.},
     TITLE = {An introduction to multicomplex spaces and functions},
    SERIES = {Monographs and Textbooks in Pure and Applied Mathematics},
    VOLUME = {140},
      NOTE = {With a foreword by Olga Taussky Todd},
 PUBLISHER = {Marcel Dekker, Inc., New York},
      YEAR = {1991},
     PAGES = {xiv+402},
      ISBN = {0-8247-8345-X},
   MRCLASS = {30G35 (32A30)},
  MRNUMBER = {1094818},
MRREVIEWER = {John\ Ryan},
}

@article {BCTransmutation,
    AUTHOR = {Vicente-Ben\'itez, V. A.},
     TITLE = {Transmutation operators and complete systems of solutions for
              the radial bicomplex {V}ekua equation},
   JOURNAL = {J. Math. Anal. Appl.},
  FJOURNAL = {Journal of Mathematical Analysis and Applications},
    VOLUME = {536},
      YEAR = {2024},
    NUMBER = {2},
     PAGES = {Paper No. 128224, 28},
      ISSN = {0022-247X,1096-0813},
   MRCLASS = {30G20 (35J46 46E20)},
  MRNUMBER = {4708695},
       DOI = {10.1016/j.jmaa.2024.128224},
       URL = {https://doi.org/10.1016/j.jmaa.2024.128224},
}

@article {BCBergman,
    AUTHOR = {Vicente-Ben\'itez, V. A.},
     TITLE = {Bergman spaces for the bicomplex {V}ekua equation with bounded
              coefficients},
   JOURNAL = {J. Math. Anal. Appl.},
  FJOURNAL = {Journal of Mathematical Analysis and Applications},
    VOLUME = {543},
      YEAR = {2025},
    NUMBER = {2},
     PAGES = {Paper No. 129025},
      ISSN = {0022-247X,1096-0813},
   MRCLASS = {30 (32 46E22)},
  MRNUMBER = {4822609},
       DOI = {10.1016/j.jmaa.2024.129025},
       URL = {https://doi.org/10.1016/j.jmaa.2024.129025},
}

@article {FundBicomplex,
    AUTHOR = {Campos, H. M. and Kravchenko, V. V.},
     TITLE = {Fundamentals of bicomplex pseudoanalytic function theory:
              {C}auchy integral formulas, negative formal powers and
              {S}chr\"odinger equations with complex coefficients},
   JOURNAL = {Complex Anal. Oper. Theory},
  FJOURNAL = {Complex Analysis and Operator Theory},
    VOLUME = {7},
      YEAR = {2013},
    NUMBER = {2},
     PAGES = {485--518},
      ISSN = {1661-8254,1661-8262},
   MRCLASS = {30G35 (30D60 30E20)},
  MRNUMBER = {3037061},
MRREVIEWER = {John\ Ryan},
       DOI = {10.1007/s11785-012-0256-4},
       URL = {https://doi.org/10.1007/s11785-012-0256-4},
}

@book {BCHolo,
    AUTHOR = {Luna-Elizarrar\'as, M. E. and Shapiro, M. and Struppa,
              D. C. and Vajiac, A.},
     TITLE = {Bicomplex holomorphic functions},
    SERIES = {Frontiers in Mathematics},
      NOTE = {The algebra, geometry and analysis of bicomplex numbers},
 PUBLISHER = {Birkh\"auser/Springer, Cham},
      YEAR = {2015},
     PAGES = {viii+231},
      ISBN = {978-3-319-24866-0; 978-3-319-24868-4},
   MRCLASS = {30-02 (30G35 32A30)},
  MRNUMBER = {3410909},
MRREVIEWER = {Alessandro\ Perotti},
       DOI = {10.1007/978-3-319-24868-4},
       URL = {https://doi.org/10.1007/978-3-319-24868-4},
}

@article {ComplexSchr,
    AUTHOR = {Rochon, D.},
     TITLE = {On a relation of bicomplex pseudoanalytic function theory to
              the complexified stationary {S}chr\"odinger equation},
   JOURNAL = {Complex Var. Elliptic Equ.},
  FJOURNAL = {Complex Variables and Elliptic Equations. An International
              Journal},
    VOLUME = {53},
      YEAR = {2008},
    NUMBER = {6},
     PAGES = {501--521},
      ISSN = {1747-6933,1747-6941},
   MRCLASS = {30G35 (30G20)},
  MRNUMBER = {2421815},
MRREVIEWER = {Baruch\ A.\ Schneider},
       DOI = {10.1080/17476930701769058},
       URL = {https://doi.org/10.1080/17476930701769058},
}

@article{Dirichletbianalytic,
author = {Moreno García, A.   and  Villaseñor, J. R. and  Abreu-Blaya, R. and  Sánchez Santiesteban, J. L. },
title = {On the Dirichlet problem for bianalytic functions},
journal = {Complex Variables and Elliptic Equations},
volume = {0},
number = {0},
pages = {1--12},
year = {2025},
publisher = {Taylor \& Francis},
doi = {10.1080/17476933.2025.2502958},
URL = {    
        https://doi.org/10.1080/17476933.2025.2502958
},
eprint = { 
    
        https://doi.org/10.1080/17476933.2025.2502958}}

@book {GK,
    AUTHOR = {Greene, R. E. and Krantz, S. G.},
     TITLE = {Function theory of one complex variable},
    SERIES = {Graduate Studies in Mathematics},
    VOLUME = {40},
   EDITION = {Third},
 PUBLISHER = {American Mathematical Society, Providence, RI},
      YEAR = {2006},
     PAGES = {x+504},
      ISBN = {0-8218-3962-4},
   MRCLASS = {30-01},
  MRNUMBER = {2215872},
       DOI = {10.1090/gsm/040},
       URL = {https://doi.org/10.1090/gsm/040},
}

@article{Bitsadze1948,
  title={About the uniqueness of the Dirichlet problem for elliptic
partial differential equations (in Russian)},
  author={Bitsadze, A.~V.},
  journal={Uspekhi Mat. Nauk 3},
  volume={6},
  number={28},
  year={1948}
}

@article {Beg04,
    AUTHOR = {Begehr, H.},
     TITLE = {Boundary value problems for the {B}itsadze equation},
   JOURNAL = {Mem. Differential Equations Math. Phys.},
  FJOURNAL = {Georgian Academy of Sciences. A. Razmadze Mathematical
              Institute. Memoirs on Differential Equations and Mathematical
              Physics},
    VOLUME = {33},
      YEAR = {2004},
     PAGES = {5--23},
      ISSN = {1512-0015},
   MRCLASS = {30G20 (30E20 30E25 35C15 35J25)},
  MRNUMBER = {2119896},
MRREVIEWER = {Vladimir\ Mityushev},
}

@article {Karaca20,
    AUTHOR = {Karaca, B.},
     TITLE = {Dirichlet problem for complex model partial differential
              equations},
   JOURNAL = {Complex Var. Elliptic Equ.},
  FJOURNAL = {Complex Variables and Elliptic Equations. An International
              Journal},
    VOLUME = {65},
      YEAR = {2020},
    NUMBER = {10},
     PAGES = {1748--1762},
      ISSN = {1747-6933,1747-6941},
   MRCLASS = {30G20 (35J56 35J57 35J58)},
  MRNUMBER = {4142781},
MRREVIEWER = {Ahmet\ Okay\ \c Celebi},
       DOI = {10.1080/17476933.2019.1684478},
       URL = {https://doi.org/10.1080/17476933.2019.1684478},
}

\end{document}